\numberwithin{equation}{section}
\newcommand{\toi}{\to\infty}
\newcommand{\eind}{\stackrel{d}{=}}
\newcommand{\dto}{%
	\mathrel{\vbox{\offinterlineskip\ialign{%
				\hfil##\hfil\cr
				$\scriptstyle d$\cr
				$\longrightarrow$\cr
			}}}}
\newcommand{\wto}{%
	\mathrel{\vbox{\offinterlineskip\ialign{%
				\hfil##\hfil\cr
				$\scriptstyle w$\cr
				$\longrightarrow$\cr
			}}}}
\newcommand{\vto}{%
	\mathrel{\vbox{\offinterlineskip\ialign{%
				\hfil##\hfil\cr
				$\scriptstyle v$\cr
				$\longrightarrow$\cr
			}}}}
\newcommand{\pr}{\mathbb{P}}
\renewcommand{\pr}{\mathbf{P}}                   
\newcommand{\ex}{\mathbb{E}}
\renewcommand{\ex}{\mathbf{E}}
\newcommand{\one}[1]{\boldsymbol{1}_{\{#1\}}}
\newcommand\ind[1]{\boldsymbol{1}{\left\{#1\right\}}}
\newcommand{\bone}{\boldsymbol{1}}
\newcommand{\N}{\mathbb{N}}
\newcommand{\R}{\mathbb{R}} 
\newcommand{\Z}{\mathbb{Z}}
\newcommand{\E}{\mathbb{E}}
\newcommand{\bB}{\boldsymbol{B}}
\renewcommand{\bB}{X}
\newcommand{\bN}{\boldsymbol{N}} 
\newcommand{\bQ}{\boldsymbol{Q}}
\renewcommand{\bQ}{Q}
\newcommand{\bM}{\boldsymbol{M}}
\newcommand{\bi}{\boldsymbol{i}}
\newcommand{\sA}{\mathcal{A}}
\newcommand{\sY}{\mathcal{Y}}
\newcommand{\sN}{\mathcal{N}}
\newcommand{\sF}{\mathcal{F}}
\newcommand{\tsN}{\tilde{\mathcal{N}}}
\newcommand{\eps}{\varepsilon}
\renewcommand{\epsilon}{\varepsilon}
\newcommand{\vep}{\varepsilon}
\renewcommand{\phi}{\varphi}
\renewcommand{\emptyset}{\varnothing}
\newcommand{\ball}{B}
\newcommand{\n}{\tau}
\newcommand{\dx}{\mathrm{d}}
\newcommand{\tX}{\tilde{X}}
\newcommand{\Tu}[1][u]{T_{r(#1),#1}}
\newcommand{\Tb}[1][\xi]{T_{#1^\beta,#1}}
\newcommand{\metric}{{\mathsf{m}}}
\newcommand{\tN}{\tilde{N}}
\newcommand{\tP}{\tilde{P}}
\newcommand{\tPhi}{\widetilde{\Phi}}
\newtheorem{theorem}{Theorem}[section]
\newtheorem{lemma}[theorem]{Lemma}
\newtheorem{proposition}[theorem]{Proposition}
\newtheorem{hypothesis}[theorem]{Assumption}
\theoremstyle{definition}
\newtheorem{definition}[theorem]{Definition}
\theoremstyle{remark}
\newtheorem{remark}[theorem]{Remark}
\newtheorem{example}[theorem]{Example}
\newcommand{\dint}{\mathrm{d}}
\newcommand{\shift}[1]{\varphi_{#1}}
\newcommand{\borel}{\mathcal{B}}
\newcommand{\tsum}{\textstyle\sum}
\newcommand{\tprod}{\textstyle\prod}
\newcommand{\tmax}{\textstyle\max}
\DeclareMathOperator{\Leb}{Leb}
\newcommand{\rev}[1]{\Blue{#1}}
\renewcommand{\rev}[1]{#1}
\title{Extremal behavior of stationary marked point processes}
\author{Bojan Basrak\thanks{Department of Mathematics, Faculty of
    Science, University of Zagreb, Bijeni\v{c}ka 30, 10000 Zagreb,
    Croatia; bbasrak@math.hr}, Ilya Molchanov\thanks{Institute of
    Mathematical Statistics and Actuarial Science, University of Bern,
    Alpeneggstr. 22, 3012 Bern, Switzerland;
    ilya.molchanov@stat.unibe.ch}, Hrvoje
  Planini\'c\thanks{Department of Mathematics, Faculty of Science,
    University of Zagreb, Bijeni\v{c}ka 30, 10000 Zagreb, Croatia;
    planinic@math.hr (corresponding author)}} \date{\today}
\begin{document}

\maketitle

\begin{abstract}
  We consider stationary configurations of points in Euclidean space
  which are marked by positive random variables called scores. The
  scores are allowed to depend on the relative positions of other
  points and outside sources of randomness. Such models have been
  thoroughly studied in stochastic geometry, e.g.\ in the context of
  random tessellations or random geometric graphs.

  It turns out that in a neighbourhood of a point with an extreme
  score one can often rescale positions and scores of nearby points to
  obtain a limiting point process, which we call the tail
  configuration. Under some assumptions on dependence between scores,
  this local limit determines the global asymptotics for extreme
  scores within increasing windows in $\R^d$. The main result
  establishes the convergence of rescaled positions and clusters of
  high scores to a Poisson cluster process, quantifying the idea of
  the Poisson clumping heuristic by D.~Aldous (in the point process
  setting). \rev{In contrast to the existing results, our framework allows for explicit calculation of essentially all extremal quantities related to the limiting behavior of extremes.}

  We apply our results to models based on (marked) Poisson processes
  where the scores depend on the distance to the $k$th nearest
  neighbor and where scores are allowed to propagate through a random
  network of points depending on their locations. 

\vspace{0.1cm}
  \noindent
  \textbf{Keywords:} cluster; extreme score; marked point process;
  moving maxima; nearest neighbor; Poisson convergence

  \noindent
  \textbf{MSC 2020:} Primary 60G70; Secondary 60D05; 60G55; 60G57
\end{abstract}



\section{Introduction}
\label{sec:introduction}

In studies of extreme, or rare, features in a point process
configuration, Poisson limits and extreme value distributions
naturally appear, see, for instance, Owada
\cite{owada18:_limit_betti}, Otto \cite{otto20:_poiss_poiss} or
Bobrowski et al.~\cite{bobrowski21:_poiss_palm}.  For geometric
structures without a significant local dependence Poisson limits are
not unexpected. In this case, high scores essentially arrive as
isolated points which makes analysis relatively simple. In more
complicated cases, one can try to remove clumps of large scores in the
language of Aldous~\cite{aldous:1989}.  However, if one simply
substitutes a clump (or a cluster) of large scores by a single score,
one looses important information about the geometric properties of the
limiting extremal objects. Here we derive and present a mathematically
rigorous solution to this problem.

In particular, we present techniques suitable to analyze extremes of
marked point processes in Euclidean space. Intuitively, these
processes can be viewed as a sequence of scores (i.e., random values)
recorded at random locations. We concentrate on points whose scores
are high with the aim to understand the appearance of other such
points nearby, meaning that we allow extreme scores arising in
clusters. Such clustering phenomenon has been well described by Aldous
\cite{aldous:1989}, see Chenavier and Robert \cite{chen-rob18} for a
recent analysis of extremes in random tessellations.

Our key concept is that of the tail configuration, which is closely
related to the tail process introduced in \cite{basrak:segers:2009}
and since then widely used in time series
\cite{dombry:hashorva:soulier:2018,janssen:2019,kulik:soulier:2020}. For
stationary time series this process appears after conditioning upon a
high score at zero. When adapting this idea to spatial setting,
one needs to work with Palm versions of marked point processes, which
necessarily has a point located at the origin, and then condition upon
the fact that the score at 0 is high. Passing to the limit involves
scaling down the scores, but often also requires scaling of the
locations.

The resulting tail configuration provides a local description of the
clustering phenomenon related to high scores. Looking at high scores
globally in a big domain shows that they build islands (clusters of
points) in space. Our main result Theorem~\ref{thm:main_result}
provides a limit theorem showing convergence of such global cluster
process to a marked Poisson process in the space, whose marks are
point processes themselves considered equivalent up to translation. In
this way we factor out positions of the clusters and explicitly
describe the limiting distribution and the extremal index which
provides the information about the mean cluster size. One can compare
our result with its classical counterpart for discrete time stationary
sequences as presented in \cite[Theorem~3.6]{basrak18} or
\cite[Theorem~6.1.4]{kulik:soulier:2020}.

Although our theory applies much more generally, we illustrate our
results on two examples with scores obtained from a background Poisson
point process. In the first one, the scores are simply reciprocals to
the distance between a point and its $k$th nearest neighbor. As a
special case of this example, for $k=1$ one describes the limiting
structure of the process of points with large inradii in the
Poisson--Voronoi tessellation, studied in \cite{chen-rob18} in
dimension 2.  In our second example, the points are initially marked
by i.i.d.\ random values, but the actual score at a point $t$ say,
depends also on the (weighted) values at the points in a possibly
random neighborhood of $t$. The example can be seen as a
generalization of the moving maxima model from time series
analysis. But here, we are particularly interested to see how large
values propagate in such a random network.

The paper is organized as follows.  Section~\ref{sec:main-definition}
sets up our basic definitions.  Section~\ref{sec:tail-configuration}
introduces the tail process and its spectral counterpart. Its central
result is a certain invariance property of the tail configuration
(Theorem~\ref{prop:exceedance-s}), which is related to the time-change
formula from time series, see \cite{basrak:segers:2009}; cf.\ also
\cite{planinic:2021} and \cite{last:2021} for a discussion on the
connection to standard Palm theory.
Section~\ref{sec:poiss-appr-clust} contains the main result which
provides a Poisson approximation of extremal clusters. The
construction involves the standard idea of splitting the space into
rectangular blocks. Three main assumptions consist of conditions on
block sizes and dependence within and between extremal blocks, see
Section~\ref{sec:assumpt-crefthm:m}. In
Section~\ref{sub:alternative_repr} we discuss the representation of
the extremal index and the distribution of the typical cluster. The
key concept here is the idea of an anchoring function, which is
motivated by a similar concept from random fields indexed over $\Z^d$,
see \cite{basrak:planinic:2020}. The proof of the main result is
postponed to
Section~\ref{subs:proof_main_result}. Section~\ref{sub:small_dist_kth_NN}
treats in detail the case of scores derived from the neighboring
structure of a stationary Poisson process in Euclidean space.  In
Section~\ref{sub:moving_maxima_tail} we deal with the moving maxima
model. In both examples the main steps consist of determining the tail
configuration and then checking the appropriate dependence conditions.

\rev{Consider first a very simple motivating example.
\begin{example}\label{exmp:simple}
  Let $ P = \sum \delta_{t}$ denote a homogeneous Poisson process on
  $\R^d$ independently marked by i.i.d. points $(h_t,\vep_t,\zeta_t)$
  in $\R^d \times \{0,1\} \times \R_+$.  In particular $\vep_t$ are
  i.i.d. Bernoulli random variables, with success probability $p$,
  say. Assume for simplicity that all three components of the mark are
  independent, that $h_t$'s have a symmetric continuous distribution
  around the origin with bounded support and $\zeta_t$'s have a
  regularly varying distribution with index $\alpha>0$, i.e.
  $\pr (\zeta > u) = L(u) u ^{-\alpha}$ for some slowly varying
  function $L$.  Consider now the following simple Poisson cluster
  process
  \[
    X =  \sum \delta_{(t,\zeta_t)} + \vep_t \delta_{(t+h_t,\zeta_t)}.
  \]
  Thus, at each point $t$ of the background Poisson process $P$, in
  $X$ we observe a score $\zeta_t$, which is then with probability $p$
  repeated at a shifted location $t+h_t$. Suppose now that we observe
  the process $X$ on a hypercube $[0,\n]^d$ for $\n\toi$.  Note that
  one can always find a function $ a_\n$ so that
  $\n^d \pr(\zeta>a_\n \epsilon) \to \epsilon^{-\alpha}$ for any
  $\epsilon >0$.  It is unsurprising that one gets a nontrivial limit,
  after rescaling the locations and scores of points of $X$. Indeed,
  this is immediate in the case $p=0$, in which case there are no
  clusters in $X$ and therefore (in the vague topology as explained
  below)
  \[
    T_{a_\n} X :=   \sum \delta_{(t/\n,\zeta_t/a_\n)}
    \dto N  = \sum_{i=1}^{\infty} \delta_{(U_i\,,\Gamma_i^{-1/\alpha})},
  \]
  where $\{(U_i,\Gamma_i),i\geq1\}$ are points of the Poisson process
  on $[0,1]^d\times \R_+$ with the intensity measure being the product
  of the Lebesgue measure on $[0,1]^d$ and the Lebesgue measure on
  $\R_+$. If $p=1/2$, a similar result holds, however, the large
  scores in $X$ come in clusters of size 1 or 2, so that the limit
  becomes a compound Poisson process. Without further
  adjustments, the clusters will collapse to a single location in the
  limit. Below we explain how one can prove a version of this limiting
  result which also preserves the shape the cluster in the limit under
  relatively general assumptions.
\end{example}
}

\section{Random counting measures and marked point processes}
\label{sec:point-processes}

\subsection{Basic definitions}
\label{sec:main-definition}

Consider the space $\E:=\R^d\times(0,\infty)$ with the standard
product topology and the Borel $\sigma$-algebra $\borel(\E)$. A point
in $\E$ is written as $(t,s)$, where $t\in\R^d$ is said to be the
\emph{position} and $s>0$ is the mark or \emph{score} (at $t$). 

A Borel measure $\mu$ on $\E$ is said to be a \emph{counting measure}
if it takes values $\{0,1,2,\ldots\}\cup\{\infty\}$ on $\borel(\E)$.
Denote by $\mu'$ the \emph{projection} of $\mu$ on $\R^d$, that is,
$\mu'(B)=\mu(B\times(0,\infty))$ for each Borel $B$ in $\R^d$.  We
call a counting measure \emph{simple} if its projection on $\R^d$ is
simple, that is, $\mu(\{t\}\times(0,\infty)) \leq 1$ for all
$t\in\R^d$. We write $(t,s)\in\mu$ and $\mu(t)=\rev{\mu(\{t\})=}s$ if
$\mu(\{(t,s)\})=1$; {for convenience, if
  $\mu(\{t\}\times (0,\infty))=0$ we will sometimes write $\mu(t)=0$}.
Each simple counting measure $\mu$ is uniquely represented by the set
of its atoms. To emphasize this, we write
\begin{displaymath}
  \mu=\tsum_{(t,s)\in\mu} \delta_{(t,s)},
\end{displaymath}
or equivalently, $\mu=\{(t_i,s_i):i=1,\dots,k\}$, where $k$ is the
total number of atoms in $\mu$ (which may be infinite).  For a Borel
function $f:\E\to\R$, denote
\begin{displaymath}
  \mu(f)\equiv \int f\dint\mu :=\tsum_{(t,s)\in\mu} f(t,s). 
\end{displaymath}

In the following, it is essential to single out families of sets where
counting measures take finite values. Introduce 
subfamilies $\borel_{11}, \borel_{10}, \borel_{01} \subseteq \borel(\E)$:
\begin{enumerate}[i)]
\item $A\in \borel_{11}$ if $A\subseteq B\times (\eps,\infty)$ for
  some $B\subseteq \R^d$ bounded and $\eps>0$;
\item $A\in \borel_{10}$ if $A\subseteq B\times (0,\infty)$ for some
  $B\subseteq \R^d$ bounded;
\item $A\in \borel_{01}$ if $A\subseteq \R^d\times (\eps,\infty)$ for
  some $\eps>0$.
\end{enumerate}
For consistency, we sometimes write $\borel_{00}:=\borel(\E)$.  These
families provide examples of a \emph{boundedness} or \emph{bornology}
on $\E$, see \cite{basrak:planinic:2019}, and clearly satisfy
$\borel_{11}=\borel_{10}\cap\borel_{01}$. 

Let $\sN_{ij}$ denote the family of simple counting measures with
finite values on $\borel_{ij}$, $i,j\in\{0,1\}$.  For convenience, in
the sequel denote $\sN:=\sN_{11}$. Note that
$\sN_{00} \subset \sN_{01}\cap \sN_{10}$ and
$\sN_{01}\cup \sN_{10} \subset \sN_{11} = \sN$.  The families
$\sN_{ij}$ are equipped with the \emph{vague topology} determined by
the choice of the boundedness, see \cite{basrak:planinic:2019}.

\begin{definition}
  Counting measures $(\mu_n)_{n\in \N}$ from {$\sN_{ij}$} with
  $i,j\in \{0,1\}$ are said to converge to {$\mu\in \sN_{ij}$} as
  $n\toi$ in $\borel_{ij}$ or $\borel_{ij}$-vaguely (notation
  $\mu_n\vto \mu$) if $\mu_n(f)\to\mu(f)$ as $n\toi$ for all
  continuous bounded functions $f:\E\to\R$ whose support is in some
  $B\in\borel_{ij}$.
\end{definition}



The notion of $\borel_{ij}$-vague convergence on $\sN_{ij}$ can be
seen as convergence with respect to the smallest topology on
$\sN_{ij}$ which makes the mappings $\mu\mapsto\mu(f)$ continuous for
all continuous bounded functions $f:\E\to\R$ whose support is in some
$B\in\borel_{ij}$; call this topology the \textit{$\borel_{ij}$-vague
  topology}. Since the extension of this topology to the larger space
of all Borel measures on $\E$ which are finite on elements of
$\borel_{ij}$, is known to be Polish (see \cite[Theorem
4.2]{kallenberg:2017} and \cite[Theorem 3.1]{basrak:planinic:2019}),
the $\borel_{ij}$-vague topology on $\sN_{ij}$ is separable and
metrizable. We have used the phrase \textit{$\borel_{ij}$-vague} instead of simply
\textit{vague} since one can consider $\sN_{ij}$ with respect to the
(weaker) $\borel_{i'j'}$-vague topology whenever
$\sN_{ij}\subseteq \sN_{i'j'}$ which is equivalent to
$\borel_{i'j'}\subseteq \borel_{ij}$.

\rev{In the sequel, the choice of a particular vague topology will depend 
on what kind of points in $\E$ we want to control. For example, in the definition of the tail configuration below, we will use the $\borel_{11}$-vague topology since we want to control extremal
scores located in a bounded neighborhood of a typical extremal score which is assumed to be at the origin.}
  
Define the \emph{shift operators} $\shift{z}$, $z\in \R^d$, on $\E$ by
letting $ \shift{z}(t,s) := (t-z,s)$,
and let 
\begin{align*}
  \shift{z} \mu := \tsum_{(t,s)\in \mu} \delta_{\shift{z}(t,s)}
  = \tsum_{(t,s)\in \mu} \delta_{(t-z,s)}.
\end{align*}
Thus, if $\mu$ at $z$ has score $s$, then $s$ becomes the score of
$\shift{z} \mu$ at $0$, so that the shift applies only to positions,
leaving the scores unchanged.  Since the families $\borel_{ij}$ are
invariant under shifts, the families $\sN_{ij}$ are also
invariant. Observe that the mapping $(z,\mu)\mapsto \shift{z}\mu$ from
$\R^d\times \sN_{ij}$ to $\sN_{ij}$ is continuous if $\sN_{ij}$ is
equipped with the vague topology generated by any
$\borel_{i'j'}\subseteq\borel_{ij}$.

The sets $\sN_{ij}$, $i,j=0,1$, are equipped with the Borel
$\sigma$-algebra generated by the maps $B\mapsto\mu(B)$ for all
$B\in\borel(\E)$; this coincides with the Borel $\sigma$-algebra
generated by $\borel_{ij}$-vaguely open sets.  A \emph{random counting
  measure} is a random element $X$ in $\sN$. It is called
\emph{stationary} if $\shift{z}X$ and $X$ coincide in distribution for
all $z\in\R^d$.

If a random counting measure $X$ takes values from a smaller family
$\sN_{10}$, then $X$ is called a \emph{marked point process} on $\R^d$
(with marks in $(0,\infty)$).  Then, for all bounded $A\subset\R^d$,
we have $X(A\times(0,\infty))<\infty$ a.s., that is, the number of
$t\in A$ such that $(t,s)\in X$ for some $s>0$, is almost surely
finite. We assume throughout that this number has a finite mean. If $X$ is
stationary, the expected value of $X(A\times(0,\infty))$ is
proportional to the Lebesgue measure of $A\in\borel(\R^d)$. The
coefficient of proportionality $\lambda$ is said to be the intensity
of $X$. Later on we usually assume that $\lambda=1$.

Each stationary marked point process $X$ on $\R^d$ of finite intensity
admits its \emph{Palm version} $\tX$, which is a marked point process
on $\R^d$, satisfying \rev{refined Campbell's theorem}
\begin{align}
  \label{eq:refined_Campbell}
  \ex\left[\tsum_{(t,s)\in X} h(t,\shift{t}X)\right]
  = \lambda \int_{\R^d} \ex\left[h(t,\tX)\right] \dint t 
\end{align}
for all measurable $h:\R^d\times \sN_{10}\to \R_+$.  The Palm version
$\tX$ has the following invariance property
\begin{align}
  \label{eq:mecke}
  \ex\left[\tsum_{(t,s)\in \tX} h(-t,\shift{t}\tX)\right]
  =\ex\left[\tsum_{(t,s)\in \tX} h(t,\tX)\right] 
\end{align}
for all measurable $h:\R^d\times \sN_{10} \to \R_+$, see
\cite[Theorem~13.2.VIII]{daley:verejones:2008}.
Note that $\tX$ almost surely contains the point $(0,\xi)$; the random
variable $\xi:=\tX(0)$ is said to be the score of the Palm version at
the origin.

Let $\wto$ denote weak convergence of probability measures and $\dto$
the corresponding convergence in distribution. Distributional
convergence of random counting measures in $\sN$ is understood with
respect to a particular version of the vague topology, and so relies
on the choice of the corresponding boundedness. It is well known that
$X_n\dto X$ in $\borel_{ij}$ if and only if the Laplace functionals of
$X_n$
\begin{displaymath}
  L_{f}(X_n):=\ex\ \exp \left\{ - \tsum_{(t,s)\in X_n} f(t,s) \right\}
\end{displaymath}
converge to $L_f(X)$ as $n\toi$ for all continuous
$f : \E\to [0,\infty)$ with support in $\borel_{ij}$, see \cite[Theorem 4.11]{kallenberg:2017}.

\subsection{A general construction of scores}
\label{sec:gener-constr-scor}

In our main examples we deal with marked point processes $X$ derived from
a marked Poisson point process using the following general
construction.  Let $P$ be an independently marked stationary Poisson
process in $\R^d$, where $\R^d$ is the space of locations and the
marks take values from $(0,\infty)$. Note that trivial amendments make
it possible to consider the marks taking values in a
general Polish space, and allow this construction to be applied to a
general marked point processes. The intensity measure of $P$ is the
product of the Lebesgue measure on $\R^d$ (possibly, scaled by a
constant) and a probability measure $m$ on $(0,\infty)$.

Consider a measurable function
$\psi: \R^d \times {\sN_{10}} \to (0,\infty)$ such that
$\psi(t-z,\varphi_z \mu) = \psi(t,\mu)$ for all $z\in\R^d$. In the
following $\psi$ is called a \emph{scoring function}. For
$ \mu := \tsum \delta_{(t,z)} \in {\sN_{10}}$ denote
\begin{displaymath}
  \Psi(\mu) := \tsum_{(t,z)\in\mu} \delta_{(t, \psi(t,\mu))}\,.
\end{displaymath}
This defines a mapping from {$\sN_{10}$ to $\sN_{10}$}. While this
mapping does not change locations of points, it equips each point
$t\in\mu'$ with a new score $s=\psi(t,\mu)$. The same construction can
be clearly applied to a Poisson process in $\R^d$ without marks, which
fits in the above framework by letting all the marks equal to 0 say.

The shift-invariance property of $\psi$ implies that
\begin{equation}\label{eq:PsiShift}
  \Psi(\varphi_z\mu)  = \varphi_z(\Psi(\mu))\,.
\end{equation}

By the Poisson assumption, the Palm version of $P$ is given by
\begin{displaymath}
  \tP=P+\delta_{(0,\zeta)},
\end{displaymath}
where $\zeta$ has distribution $m$, and is independent of $P$. 

\begin{lemma}
  \label{lemma:Psi}
  The Palm version of $\Psi(P)$ is given by $\Psi(\tP)$.
\end{lemma}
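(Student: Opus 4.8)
The plan is to verify the defining property~\eqref{eq:refined_Campbell} of the Palm version directly, with $X := \Psi(P)$ and with $\Psi(\tP)$ proposed as $\tX$. So I would fix an arbitrary measurable $h : \R^d \times \sN_{10} \to \R_+$ and aim to show
\begin{displaymath}
  \ex\Bigl[\tsum_{(t,s)\in \Psi(P)} h(t,\shift{t}\Psi(P))\Bigr]
  = \lambda \int_{\R^d} \ex\bigl[h(t,\Psi(\tP))\bigr]\,\dint t,
\end{displaymath}
where $\lambda$ is the intensity of $P$ (equivalently of $\Psi(P)$, since $\Psi$ does not move points). The first observation is that the atoms of $\Psi(P)$ sit at exactly the same positions as those of $P$, so the left-hand sum can be re-indexed over $(t,z)\in P$; and since $\shift{t}$ commutes with $\Psi$ by~\eqref{eq:PsiShift}, we have $\shift{t}\Psi(P) = \Psi(\shift{t}P)$. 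Hence the left-hand side equals $\ex[\tsum_{(t,z)\in P} g(t,\shift{t}P)]$ with $g(t,\mu) := h(t,\Psi(\mu))$, which is again measurable and nonnegative.

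Next I would apply the refined Campbell formula~\eqref{eq:refined_Campbell} for the Poisson process $P$ itself — equivalently, the Mecke equation — to this $g$. This turns the left-hand side into $\lambda \int_{\R^d} \ex[g(t,\tP)]\,\dint t = \lambda \int_{\R^d} \ex[h(t,\Psi(\tP))]\,\dint t$, using the explicit form $\tP = P + \delta_{(0,\zeta)}$ of the Palm version of $P$ recalled just before the lemma. That is precisely the right-hand side we want, so the identity follows. Since~\eqref{eq:refined_Campbell} characterizes the Palm version up to distribution among marked point processes on $\R^d$, and $\Psi(\tP)$ is manifestly such a process (it lies in $\sN_{10}$ and a.s.\ has a point at the origin, namely $(0,\psi(0,\tP))$), this identifies $\Psi(\tP)$ as the Palm version of $\Psi(P)$.

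The only genuine point requiring care — the ``main obstacle'', though a mild one — is the bookkeeping around the scoring function when we pass from $P$ to the augmented configuration $\tP = P + \delta_{(0,\zeta)}$: one must be sure that $\Psi$ applied to $\shift{t}P$ for an atom $t$ of $P$ gives the same point-at-origin score that $\Psi(\tP)$ assigns at $0$, i.e.\ that $\psi(0,\shift{t}P) = \psi(t,P)$, which is exactly the shift-invariance of $\psi$. One should also note that $\Psi$ is measurable as a map $\sN_{10}\to\sN_{10}$ (it is built coordinatewise from the measurable $\psi$), so that all the expectations above are well defined, and that the construction only uses $P$ through configurations in $\sN_{10}$, so restricting $h$ to that domain is no loss. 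Everything else is a direct substitution.
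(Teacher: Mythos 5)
Your proposal is correct and follows essentially the same route as the paper: re-index the sum over atoms of $\Psi(P)$ by atoms of $P$, use the shift-equivariance~\eqref{eq:PsiShift} to move $\Psi$ past $\shift{t}$, and apply the refined Campbell formula for $P$ to land on $\lambda\int \ex[h(t,\Psi(\tP))]\,\dint t$. The extra remarks you make about measurability of $\Psi$ and the point-at-origin bookkeeping are sound but not needed beyond what the paper already states.
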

\begin{proof}
  The Palm version $\tP$ satisfies
  \begin{align*}
    \ex \Big[ \tsum_{(t,u) \in P} h(t,\varphi_t P ) \Big]
    &  =  \lambda \int_{\R^d} \ex  [h(t,\tP)]\dx t \,.
  \end{align*}
  Therefore, \eqref{eq:PsiShift} yields that
  \begin{equation*}
    \ex \Big[ \tsum_{(t,u) \in P} h(t,\varphi_t \Psi(P) ) \Big]
    =	\ex \Big[ \tsum_{(t,u) \in P} h(t,\Psi \varphi_t  (P) ) \Big] 
    =  \lambda \int_{\R^d} \ex  [h(t, \Psi(\tP))] \dx t\,.
  \end{equation*}
  By the definition of Palm measure, the left-hand side is
  $$
  \lambda \int_{\R^d} \ex  \big[h(t,\widetilde{\Psi(P)})\big]
  \dx t\,,
  $$
  so that the Palm version of $\Psi(P)$ is indeed $\Psi(\tP)$.
\end{proof}

\section{Tail configuration}
\label{sec:tail-configuration}

Define a family of scaling operators $T_{v,u}:\E\to\E$, $u,v>0$, by
\begin{align}\label{eq:scaling0}
  T_{v,u}(t,s):=(t/v,s/u), \quad (t,s)\in\E.
\end{align}
For every $\mu\in\sN$, define its scaled version $T_{v,u}\mu$ by
letting $(T_{v,u}\mu)(A):=\mu(T_{v^{-1},u^{-1}}A)$ for all Borel $A$.
Equivalently, $T_{v,u}\mu = \tsum \delta_{(t/v,s/u)}$ if 
$\mu = \tsum \delta_{(t,s)}$, \rev{meaning that the atoms of
  $T_{v,u}\mu$ are obtained by applying the transformation $T_{v,u}$
  to the atoms of $\mu$.}

In the following we mostly work with counting measures scaled by $\Tu$
for $u>0$, where a function $r:(0,\infty)\to (0,\infty)$, is fixed and
regularly varying at infinity, i.e.,
\begin{align}
  \label{eq:rv_scaling}
  r(u)=l(u)u^\beta ,\quad  u>0 \, ,
\end{align}
for some $\beta\in \R$ and a slowly varying function $l$. We will
refer to $r$ and $\beta$ as the \emph{scaling function} and
\emph{scaling index}, respectively. Note that $\beta=0$ and
$r(u)\equiv 1$ are allowed.  

\begin{definition}\label{def:tail_conf}
  Fix a function $r$ which is regularly varying at infinity.  Let
  $X\in\sN_{10}$ be a stationary marked point process on $\E$ with
  Palm version $\tX$ and the score at the origin being $\xi$. If there
  exists a random counting measure $Y\in\sN$ such that
  $Y(\{0\} \times (1,\infty))=1$ a.s.\ and
  \begin{align}\label{eq:tail_process}
    \pr(\Tu \tX \in \cdot \mid \xi>u) \wto \pr(Y\in \cdot\,)
    \quad \text{ as } u\toi 
  \end{align}
  with respect to the $\borel_{11}$-vague topology, then $Y$ is called
  the \emph{tail configuration} of $X$ (with respect to the scaling
  function $r$).
\end{definition}
 
Note that the tail configuration $Y$ is assumed to be simple and it
necessarily contains the point $(0,\eta)$ with $\pr(\eta > 1)=1$. We call
the random variable $\eta$ the \emph{tail score} at the origin.  While
$X$ and $\tX$ are marked point processes and thus belong to $\sN_{10}$, the
tail configuration $Y$ in general takes values in $\sN$, which is a
larger family.

\rev{
\begin{example}[continuation of Example \ref{exmp:simple}]
  For the stationary point process $X$ of our initial example, it is
  straightforward to see that the Palm version of $\tX$ is
  $\tX \eind X + C_0$, where $C_0$ is the Palm version of the typical
  cluster of $X$ independent of $X$, see Chiu et
  al. \cite[Section~5.3]{chiu:2013}. More precisely,
  $C_0 \eind \delta_{(0,\xi)} + I \delta_{(h_0,\xi)}$. Here $I$
  represents a Bernoulli random variable independent of the random
  pair $(h_0,\xi)$, which has the same distribution as any of the
  pairs $(h_t,\zeta_t)$. However, due to the size biasing phenomenon, $I$ has different distribution from $\varepsilon_t$: in the
  case $p=1/2$, for instance, $\pr(I=1) = 2/3$. Observe that, due to the
  regular variation assumption, the distributions
  $\pr(\xi /u \in \cdot \mid \xi>u)$ converge to a Pareto distribution
  with parameter $\alpha>0$.  Moreover, the
  independence of $X$ and $C_0$  implies that
  $T_{1,u} X \dto 0$ even if we condition on $\xi>u$, therefore
  for $r(u) \equiv 1$
  \begin{displaymath}
    \pr(\Tu \tX \in \cdot \mid \xi>u)
    = \pr(T_{1,u} C_0 \in \cdot \mid \xi>u) + o(1)
    \wto \pr(Y\in \cdot\,)\quad \text{as}\; u\toi,
  \end{displaymath}
  where
  \[ 
    Y  \eind \delta_{(0,\eta)} + I \delta_{(h_0,\eta)}\,,
  \]
  and where $h_0$ is independent of $\eta$ which has the Pareto
  distribution with index $\alpha$.
\end{example}
}

It is convenient to denote
\begin{equation}
  \label{eq:1}
  \sA_y:=\big\{\mu\in\sN :(0,s)\in\mu \;\text{for some}\; s>y\big\}.
\end{equation}
Observe that the tail process $Y$ almost surely belongs to $\sA_1$. \rev{For $c>0$, $x\in \R^d$, by $B_c(x)$ we denote the open Euclidean ball around $x$  of radius $c$, and set $B_c:=B_c(0)$.}

\begin{proposition}\label{prop:S0t_is_Pareto}
  If \eqref{eq:tail_process} holds then the score at the origin $\xi$
  has a regularly varying tail, that is,
  \begin{align}
    \label{eq:S_0_is_RV}
    \lim_{u\toi}\frac{\pr(\xi>uy)}{\pr(\xi>u)}=y^{-\alpha},
    \quad y>0 \, ,
  \end{align}
  for some $\alpha>0$, and the tail score at the origin
  $\eta$ is $\mathrm{Pareto}(\alpha)$-distributed, that is,
  \begin{align}\label{eq:S0t_is_Pareto}
    \pr(\eta>y)=y^{-\alpha}, \quad y\geq 1 \, .
  \end{align}
\end{proposition}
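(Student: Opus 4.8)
The plan is to extract the tail behavior of $\xi$ from the assumed convergence \eqref{eq:tail_process} by testing it against a cleverly chosen family of functionals, and then to identify the law of $\eta$ from the same convergence together with the stationarity/invariance structure of $\tX$. First I would fix $y>0$ and consider the event $\sA_y$ from \eqref{eq:1}, which records that the scaled configuration $\Tu\tX$ has a point at the origin with score exceeding $y$. Since $\tX$ has its origin score equal to $\xi$ and scaling by $\Tu$ divides the score by $u$, we have $\{\Tu\tX\in\sA_y\}=\{\xi>uy\}$ exactly; hence
\begin{displaymath}
  \pr(\Tu\tX\in\sA_y\mid\xi>u)=\frac{\pr(\xi>uy)}{\pr(\xi>u)}
\end{displaymath}
for $y\ge 1$ (and the left side is $1$ for $y\le 1$). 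The main obstacle is that $\sA_y$ is not a continuity set for a generic weak limit, nor is its indicator a bounded continuous functional of the $\borel_{11}$-vague topology, so I would approximate $\one{\mu\in\sA_y}$ from above and below by continuous functionals of the form $\mu\mapsto g(\mu(B_c\times(y\pm\delta,\infty)))$ for small $c,\delta>0$ — using that $Y(\{0\}\times(1,\infty))=1$ a.s.\ and that $Y$ is simple to control the atoms near the origin — and use the portmanteau theorem to squeeze. This yields that $\pr(\xi>uy)/\pr(\xi>u)$ converges as $u\to\infty$ to $\pr(\eta>y)$ for all continuity points $y\ge 1$ of the law of $\eta$, and one argues the limit is actually a genuine (everywhere-defined, right-continuous) function $G(y)$ with $G(1)=1$.

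Next I would establish the multiplicative Cauchy / Hamel-type functional equation for $G$. The key is that the convergence \eqref{eq:tail_process} holds for the same tail configuration under rescaling of the conditioning level: conditioning on $\xi>uy_0$ and then scaling by $T_{r(uy_0),uy_0}$ must produce the same limit $Y$, by Definition~\ref{def:tail_conf} applied along the subsequence $u\mapsto uy_0$ (here regular variation of $r$ guarantees the location scaling is consistent in the limit). Comparing the two normalizations gives, for $y_1,y_2\ge 1$,
\begin{displaymath}
  \pr(\eta>y_1y_2)=\pr(\eta>y_1)\,\pr(\eta>y_2),
\end{displaymath}
equivalently $G(y_1y_2)=G(y_1)G(y_2)$. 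A monotone (non-increasing) solution of this equation on $[1,\infty)$ with $G(1)=1$ and $G$ not identically $1$ is necessarily $G(y)=y^{-\alpha}$ for some $\alpha\ge 0$; the possibility $\alpha=0$ (i.e.\ $G\equiv 1$) is excluded because $\eta>1$ a.s.\ forces $\pr(\eta>y)<1$ for $y$ slightly above $1$ — wait, more carefully: if $G\equiv 1$ on $[1,\infty)$ then $\eta=\infty$ a.s., contradicting that $Y\in\sN$ has finite values on $\borel_{11}\ni B_c\times(1,\infty)$; hence $\alpha>0$. This simultaneously proves \eqref{eq:S_0_is_RV} (regular variation of the tail of $\xi$ with index $\alpha$) and \eqref{eq:S0t_is_Pareto} (since $\pr(\eta>y)=G(y)=y^{-\alpha}$ for $y\ge 1$).

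I expect the genuinely delicate step to be the first one: justifying that the non-continuous set $\sA_y$ can be sandwiched by continuous test functionals and that the conditioning does not create mass escaping to the boundary score level $y$ — i.e.\ ruling out that $\pr(\xi=uy)$-type effects or atoms of $\eta$ at $y$ spoil the limit. This is handled by noting that the limit law of $\eta$ can have at most countably many atoms, so the functional equation, once derived for the dense set of continuity points, extends by right-continuity (which $G$ inherits as a decreasing limit of the ratios along a suitable monotone argument, or simply because $y\mapsto\pr(\xi>uy)/\pr(\xi>u)$ is non-increasing for each $u$, so its pointwise limit on a dense set determines a non-increasing function whose right-continuous version is $G$). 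The functional-equation argument and the identification $G(y)=y^{-\alpha}$ are then routine. An alternative, slicker route avoids the functional equation entirely: apply \eqref{eq:tail_process} directly with test sets $\sA_y$ for $y\ge 1$ to get $\pr(\eta>y)=\lim_u\pr(\xi>uy)/\pr(\xi>u)=:G(y)$, observe $G$ is non-increasing with $G(1)=1$, and then note that for any $z>0$ the ratio $\pr(\xi>uzy)/\pr(\xi>uz)$ also converges (apply \eqref{eq:tail_process} along $u\mapsto uz$, for $z\ge 1$, and use monotonicity plus a sandwiching in $z$ for $0<z<1$) to the same $G(y)$ — which is exactly the statement that $\pr(\xi>\cdot)$ is regularly varying, and regular variation of a monotone tail forces the Pareto form of $G$; I would present whichever of these two is shorter given the lemmas already available in the paper.
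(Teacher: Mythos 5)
Your proposal is correct and takes essentially the same approach as the paper: the paper obtains $\pr(\xi>uy)/\pr(\xi>u)\to\pr(\eta>y)$ at continuity points by showing directly that the score-at-origin map $\mu\mapsto\mu(0)$ is continuous on $\sN\cap\sA_1$ (via convergence of support points of simple counting measures) and applying the continuous mapping theorem, rather than via your sandwiching of $\one{\sA_y}$ by continuous test functionals, but these amount to the same thing. The paper then simply invokes ``standard arguments'' for the regular variation and Pareto conclusions, which your functional-equation route (or your alternative Karamata-type route) makes explicit.
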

\begin{proof} 
  \rev{Assume that $\mu,\mu_1,\mu_2,\dots $ are counting measures in
  $\sA_1$ such that $\mu_n \vto \mu$ in $\borel_{11}$. Since
  $\mu \in \sA_1$, one can always find a bounded set
  $B\in \borel_{11}$ of the form
  $B= \ball_\vep \times (1+\vep,\infty)$ such that $\mu(B) = 1$ (i.e.\
  $(0,\mu(0))$ is the only point of $\mu$ in $B$) and
  $\mu (\partial B) = 0$. Convergence $\mu_n \vto \mu$ implies
  (see, e.g., \cite[Proposition 2.8]{basrak:planinic:2019}) that
  $\mu_n(0) \to \mu(0)$.} In other words, the score at the origin is a
  continuous function on $\sN \cap \sA_1$.  By a continuous mapping
  argument, $\xi/u = \Tu \tX (0)$, conditionally on $\xi >u$,
  converges in distribution to $\eta = Y(0)$ as $u \toi$. More
  precisely,
  \begin{displaymath}
    \frac{\pr(\xi>uy,\xi>u)}
    {\pr(\xi>u)}\to \pr(\eta>y) \quad \text{as }\; u\toi
  \end{displaymath}
  for all $y> 0$ which are continuity points for $\eta$.  Standard
  arguments now yield that (\ref{eq:S_0_is_RV}) holds for some
  $\alpha>0$ \rev{(derived by analysing the tail of $\xi$)}, and
  (\ref{eq:S0t_is_Pareto}) follows immediately.
\end{proof}

The constant $\alpha>0$ from \eqref{eq:S0t_is_Pareto} will be called
the \emph{tail index} of $X$, $\xi$ and $\eta$. \rev{Observe that scaling
index of the function $r$ from \eqref{eq:rv_scaling} is not related to
$\alpha$.} Further, the point process
\begin{align}
  \label{eq:spectral_tail_conf}
  \Theta:=\Tb[\eta] Y
\end{align}
in $\sN$ is called the \emph{spectral tail configuration} of $X$.  By
definition, $\Theta$ almost surely contains the point $(0,1)$.

\begin{proposition}\label{prop:spectral_is_indep}
  The spectral tail configuration is independent of $\eta$, and
  satisfies
  \begin{align}\label{eq:conv_to_spectral}
    \pr(\Tu[\xi] \tX \in \cdot \mid \xi>u)
    \wto \pr(\Theta\in \cdot) \quad\text{ as }\; u\toi \, .
  \end{align} 
\end{proposition}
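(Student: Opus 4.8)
The plan is to deduce both assertions from Definition~\ref{def:tail_conf} by continuous‑mapping arguments, once the scaling operators are known to act continuously. So I would first record two continuity facts: by the reformulation of $\borel_{11}$‑vague convergence of simple counting measures as convergence of support points (\cite[Proposition~2.8]{basrak:planinic:2019}), the map $(v,w,\mu)\mapsto T_{v,w}\mu$ is continuous on $(0,\infty)\times(0,\infty)\times\sN$, and $\mu\mapsto\mu(0)$ is continuous on $\sN\cap\sA_1$ (as already used in the proof of Proposition~\ref{prop:S0t_is_Pareto}). Since $Y\in\sA_1$ a.s.\ and $\eta>1$, $\eta^\beta\in(0,\infty)$ a.s., these suffice to push forward the weak limits below.

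For~\eqref{eq:conv_to_spectral} I would use that the scaling operators compose, $T_{v_1,w_1}\circ T_{v_2,w_2}=T_{v_1v_2,\,w_1w_2}$, so that $T_{r(\xi),\xi}\tX=T_{r(\xi)/r(u),\,\xi/u}\big(T_{r(u),u}\tX\big)$. Conditionally on $\xi>u$, Definition~\ref{def:tail_conf} gives $T_{r(u),u}\tX\wto Y$, hence also $\xi/u=(T_{r(u),u}\tX)(0)\wto Y(0)=\eta$ jointly with $T_{r(u),u}\tX$, by continuity of $\mu\mapsto\mu(0)$ on $\sA_1$. Writing $r(\xi)/r(u)=r\big(u\cdot(\xi/u)\big)/r(u)$ and invoking the uniform convergence theorem for regularly varying functions (via Skorokhod representation; the factor $\xi/u$ is $\ge 1$ on the conditioning event, hence tight and bounded away from $0$), I get $r(\xi)/r(u)\wto\eta^\beta$ jointly as well. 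Applying the continuous mapping theorem with $(v,w,\mu)\mapsto T_{v,w}\mu$ then yields $T_{r(\xi),\xi}\tX\wto T_{\eta^\beta,\eta}Y=\Theta$ conditionally on $\xi>u$, which is~\eqref{eq:conv_to_spectral}.

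For the independence of $\Theta$ and $\eta$, fix $y\ge 1$ and compute the weak limit of $\pr\big(T_{r(u),u}\tX\in\cdot\mid\xi>uy\big)$ in two ways. First, splitting the event $\{\xi>uy\}$ inside the conditioning on $\{\xi>u\}$, using $\xi/u\wto\eta$, the continuity point $\pr(\eta=y)=0$, and Proposition~\ref{prop:S0t_is_Pareto} for $\pr(\xi>u)/\pr(\xi>uy)\to y^{\alpha}$, this limit equals $\nu_y(\cdot):=y^{\alpha}\,\pr\big(Y\in\cdot,\ \eta>y\big)$, which is a probability measure since $\eta$ is $\mathrm{Pareto}(\alpha)$. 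Second, from $T_{r(u),u}\tX=T_{r(u)/r(uy),\,1/y}\big(T_{r(uy),uy}\tX\big)$, Definition~\ref{def:tail_conf} applied at level $uy$ together with $r(u)/r(uy)\to y^{-\beta}$ identifies the same limit as the law of $T_{y^{-\beta},y^{-1}}Y$. Equating the two gives the key identity
\begin{equation*}
  \pr\big(T_{y^{-\beta},y^{-1}}Y\in\cdot\big)=y^{\alpha}\,\pr\big(Y\in\cdot,\ \eta>y\big),\qquad y\ge 1 .
\end{equation*}
Now apply this to the bounded measurable functional $H(\mu):=g\big(T_{\mu(0)^\beta,\mu(0)}\mu\big)$, for an arbitrary bounded continuous $g$ on $\sN$ (well defined on $\sN\cap\sA_1$, which carries both $Y$ and $T_{y^{-\beta},y^{-1}}Y$ a.s.). Since $T_{y^{-\beta},y^{-1}}Y$ has score $y\eta$ at the origin and $T_{(y\eta)^\beta,\,y\eta}\circ T_{y^{-\beta},y^{-1}}=T_{\eta^\beta,\eta}$, we get $H\big(T_{y^{-\beta},y^{-1}}Y\big)=g(T_{\eta^\beta,\eta}Y)=g(\Theta)$, while $H(Y)=g(\Theta)$ as well; the identity becomes $\ex[g(\Theta)]=y^{\alpha}\,\ex\big[g(\Theta)\ind{\eta>y}\big]$, i.e.\ $\ex\big[g(\Theta)\ind{\eta>y}\big]=y^{-\alpha}\ex[g(\Theta)]=\pr(\eta>y)\,\ex[g(\Theta)]$ for all $y\ge 1$. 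As this holds for every bounded continuous $g$ and the events $\{\eta>y\}$, $y\ge 1$, generate $\sigma(\eta)$, the spectral tail configuration $\Theta$ is independent of $\eta$ (and one re‑reads off $\eta\sim\mathrm{Pareto}(\alpha)$).

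The main obstacle I expect is the topological bookkeeping on the large space $\sN=\sN_{11}$: carefully justifying the continuity of the jointly varying scaling maps in the $\borel_{11}$‑vague topology, ensuring the functionals $\mu\mapsto\ind{\mu(0)>y}$ and $\mu\mapsto T_{\mu(0)^\beta,\mu(0)}\mu$ are only used at continuity sets and where they are well defined (on $\sN\cap\sA_1$), and coupling the regularly varying ratios $r(\xi)/r(u)$, $r(uy)/r(u)$ with the counting‑measure convergence. The remaining steps are routine.
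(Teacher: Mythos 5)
Your proof of~\eqref{eq:conv_to_spectral} is essentially the route the paper takes: write $T_{r(\xi),\xi}\tX=T_{r(\xi)/r(u),\,\xi/u}(T_{r(u),u}\tX)$, use continuity of $\mu\mapsto\mu(0)$ on $\sA_1$ and locally uniform convergence $r(yu)/r(u)\to y^\beta$, and push through via the extended continuous mapping theorem (the paper packages this as the map $H_u(\mu)=(s,r(su)/r(u),\mu)$ with $s=\mu(0)$, followed by one more continuous mapping).

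For the independence of $\Theta$ and $\eta$, however, you take a genuinely different path, and it is a nice one. The paper works directly with $T_{r(\xi),\xi}\tX$: from the joint convergence $(\xi/u,T_{r(\xi),\xi}\tX)\dto(\eta,\Theta)$ it computes $\pr(\eta>y,\Theta\in B)$, splits off the ratio $\pr(\xi>uy)/\pr(\xi>u)\to y^{-\alpha}$, and uses that $T_{r(\xi),\xi}\tX$ conditioned on $\xi>uy$ still converges to $\Theta$ (the self-similarity of the spectral configuration under raising the conditioning level). You instead keep the scaling $T_{r(u),u}$ fixed, compute its weak limit conditionally on $\xi>uy$ in two ways, and obtain the scaling identity
\begin{equation*}
  \pr\big(T_{y^{-\beta},y^{-1}}Y\in\cdot\big)=y^{\alpha}\,\pr\big(Y\in\cdot,\ \eta>y\big),\qquad y\ge1,
\end{equation*}
then extract independence by applying the bounded measurable functional $H(\mu)=g\big(T_{\mu(0)^\beta,\mu(0)}\mu\big)$ to both sides. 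This is correct; the identity holds as equality of probability measures by a $\pi$-system argument once it holds on $Y$-continuity sets, and your composition $T_{(y\eta)^\beta,\,y\eta}\circ T_{y^{-\beta},y^{-1}}=T_{\eta^\beta,\eta}$ is right. Your identity is essentially the homogeneity/tail-measure property underlying the polar decomposition $Y=T_{\eta^{-\beta},\eta^{-1}}\Theta$, and it gives independence in one clean stroke; the paper's route is shorter because it only needs the continuous mapping theorem and one factorization of probabilities, whereas yours requires an extra step to identify the weak limit of $T_{r(u),u}\tX\mid\xi>uy$ and then undo the scaling through $H$. Both are valid; you might even extract from your argument, as a by-product, the identity above, which is a useful statement in its own right and appears in the literature on tail processes.

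One small point worth making explicit: the convergence $\pr\big(T_{r(u),u}\tX\in A,\ (T_{r(u),u}\tX)(0)>y \mid \xi>u\big)\to\pr(Y\in A,\ \eta>y)$ needs $A$ to be a $Y$-continuity set and $y$ to be a continuity point of $\eta$; the former you pass to all Borel $A$ via uniqueness of the weak limit, and the latter is automatic since $\eta$ is Pareto and hence atomless by Proposition~\ref{prop:S0t_is_Pareto}. You already gesture at this, but since the identity is then applied to a discontinuous functional $H$, spelling out the extension from continuity sets to all Borel sets is the one step that should not be left implicit.
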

\begin{proof}
  Recall the set $\sA_y$ from \eqref{eq:1} and note that $\sA_1$
  consists of all $\mu\in\sN$ such that
  $\mu(\{0\}\times(1,\infty))\geq1$. Consider the family of mappings
  $H_u$, $u>0$, defined by $H_u(\mu):=(s,r(su)/r(u), \mu)$, where
  $s=\mu(0)$ is the score at the origin of $\mu\in\sA_1$.

  Let $\mu_u\vto \mu$ (in $\borel_{11}$) as $u\toi$  for some $\mu_u$
  and $\mu$ from 
  $\sA_1$. Denote $s_u:=\mu_u(0)$ and $s:=\mu(0)$. Then $s_u\to s$ as
  $u\toi$.  Since $r$ is regularly varying with index $\beta$, the
  convergence $r(yu)/r(u)\to y^{\beta}$ as $u\toi$ holds locally
  uniformly in $y$ on $(0,\infty)$, see
  e.g.~\cite[Proposition~2.4]{resnick:2007}, so that
  $r(s_u u)/r(u)\to s^{\beta}$ as $u\toi$.  Therefore,
  $H_u(\mu_u)\to (s, s^{\beta}, \mu)$ as $u\toi$. The extended
  continuous mapping theorem applied to (\ref{eq:tail_process}) (see
  \cite[Theorem~5.5]{billingsley:1968}) yields that
  \begin{align*}
    \pr((\xi/u, r(\xi)/r(u), \Tu\tX)
    \in \cdot \mid \xi>u)\wto
    \pr((\eta, \eta^{\beta}, Y)\in \cdot \,) 
  \end{align*}
  on $(1,\infty)\times (0,\infty)\times \sN$. Another application
  of the continuous mapping theorem yields
  \begin{align*}
    \pr((\xi/u,\Tu[\xi]\tX)\in \cdot \mid \xi>u)
    \wto \pr((\eta, \Theta)\in \cdot \,)
  \end{align*}
  on $(1,\infty)\times \sN$. This yields
  (\ref{eq:conv_to_spectral}) and, together with (\ref{eq:S_0_is_RV}),
  \begin{align*}
    \pr(\eta>y, \Theta \in B )
    &= \lim_{u\toi}  \pr(\xi>uy, \Tu[\xi]\tX\in B \mid \xi>u) \\
    &= \lim_{u\toi}\frac{\pr(\xi>uy)}{\pr(\xi>u)}
      \; \pr(\Tu[\xi]\tX \in B\mid \xi>uy) \\
    &= y^{-\alpha} \pr(\Theta \in B) = \pr(\eta>y)\pr(\Theta \in B) 
  \end{align*}
  for all $y\ge 1$ and all Borel subsets $B\subseteq \sN$ such
  that $\pr(\Theta\in \partial B)=0$. This implies that $\eta$ and
  $\Theta$ are independent, since the class of all such $B$'s (denoted
  by $\mathcal{S}$) is closed under finite intersections and generates
  the Borel $\sigma$-algebra on $\sN$. The latter fact follows,
  since the vague topology on $\sN$ is separable and metrizable,
  so one can represent every open subset of $\sN$ as a countable
  union of open balls which are elements of $\mathcal{S}$.
\end{proof}
 
To conclude this section, we show that the invariance property
\eqref{eq:mecke} of the Palm distribution $\tX$ induces a similar
property of the tail configuration $Y$ which, as in
\cite[Section~2]{planinic:2021}, could be called
\emph{exceedance-stationarity}, cf.\ also \cite{last:2021}.

\begin{theorem}
  \label{prop:exceedance-s}
  For every  measurable $h:\R^d\times \sN \to [0,\infty)$,
  \begin{align}\label{eq:exc_stat}
    \ex\left[\tsum_{(t,s)\in Y} h(t,Y)\one{s>1}\right]
    =\ex\left[\tsum_{(t,s)\in Y} h(-t,\shift{t}Y)
    \one{s>1}\right]\, .
  \end{align}
\end{theorem}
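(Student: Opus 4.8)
The plan is to deduce the exceedance-stationarity of the tail configuration $Y$ from the Mecke-type invariance \eqref{eq:mecke} of the Palm version $\tX$, by carrying that identity through the limit \eqref{eq:tail_process}. First I would record the discrete/finite analogue of what we want: applying \eqref{eq:mecke} to the bounded point process $\Tu\tX$ (more precisely, rewriting \eqref{eq:mecke} under the scaling $T_{r(u),u}$), one gets, for suitable test functions $g$ on $\R^d\times\sN$,
\begin{displaymath}
  \ex\left[\tsum_{(t,s)\in \Tu\tX} g(t,\Tu\tX)\,\one{s>1}\right]
  =\ex\left[\tsum_{(t,s)\in \Tu\tX} g(-t,\shift{t}(\Tu\tX))\,\one{s>1}\right],
\end{displaymath}
at least after dividing both sides by $\pr(\xi>u)$ and interpreting the expectations as integrals against the conditional law $\pr(\,\cdot\mid\xi>u)$ — here one uses that $\xi>u$ is exactly the event $\Tu\tX(0)>1$, so the indicator $\one{s>1}$ for the atom at $0$ singles out the conditioning event. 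The identity \eqref{eq:mecke} is stated for $h:\R^d\times\sN_{10}\to\R_+$ and for the unscaled $\tX$; since $T_{r(u),u}$ is a measurable bijection commuting appropriately with the shifts $\shift{z}$ (note $\shift{z}T_{v,u}\mu = T_{v,u}\shift{vz}\mu$), transporting it to the scaled picture is just a change of variables.

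Next I would pass to the limit $u\toi$ on both sides using the weak convergence $\pr(\Tu\tX\in\cdot\mid\xi>u)\wto\pr(Y\in\cdot)$ from \eqref{eq:tail_process}. The natural functionals to test against are of the form $\mu\mapsto \sum_{(t,s)\in\mu} g(t,\mu)\one{s>1}$; I would first take $g$ continuous and bounded with position-support in a bounded box $B\subset\R^d$ and with the score-cutoff $\one{s>1}$ replaced by a continuous approximation $\one{s>1}$ smoothed to vanish below $1$ and equal $1$ above $1+\delta$, so that the functional is $\borel_{11}$-vaguely continuous at $\mu\in\sA_1$ off a null set — this is where the characterization that vague convergence of simple measures is convergence of atoms (cited already via \cite[Proposition 2.8]{basrak:planinic:2019}) is used, together with the fact that $Y$ a.s.\ puts no mass on the relevant boundaries. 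Passing to the limit on the left gives $\ex[\sum_{(t,s)\in Y} g(t,Y)\one{s>1}]$ and on the right $\ex[\sum_{(t,s)\in Y} g(-t,\shift{t}Y)\one{s>1}]$, using continuity of $(z,\mu)\mapsto\shift{z}\mu$. Finally a monotone-class / approximation argument removes the smoothing and the boundedness and continuity of $g$, upgrading to arbitrary measurable $h:\R^d\times\sN\to[0,\infty)$ and to the sharp indicator $\one{s>1}$, yielding \eqref{eq:exc_stat}.

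**Main obstacle.** The delicate point is the interchange of limit and the infinite sum $\sum_{(t,s)\in Y}$, i.e.\ uniform integrability / tightness controlling the contribution of atoms with large position $|t|$ or with score very close to $1$. Restricting the position-support of $g$ to a bounded box $B$ controls the first issue (on $B$ the number of relevant atoms is a.s.\ finite and, by Proposition~\ref{prop:S0t_is_Pareto} together with stationarity-type moment bounds on $X$, has finite mean, so dominated convergence applies), but then one must justify letting $B\uparrow\R^d$; monotone convergence handles this once $h\ge 0$. The score-truncation issue — that mass can concentrate near $s=1$ in the limit — is handled by noting $\eta$ is $\mathrm{Pareto}(\alpha)$ hence $\pr(\eta=1)=0$, so $\pr(Y(\{0\}\times\{1\})>0)=0$, and more generally by exceedance-stationarity applied inductively, or simply by choosing the smoothing level $\delta\to0$ along a sequence avoiding the at most countably many atoms of the law of scores of $Y$. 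I expect the bookkeeping around which class of test functions is simultaneously (i) $\borel_{11}$-vaguely continuous, (ii) rich enough to separate the relevant measures, and (iii) compatible with the change of variables under $T_{r(u),u}$ to be the part that needs the most care.
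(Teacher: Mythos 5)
Your overall strategy is the same as the paper's: establish the Mecke-type identity for the finite-$u$ measures $\Tu\tX$ on the event $\{\xi>u\}$, then pass to the limit via \eqref{eq:tail_process}, handling the continuity of the relevant functionals on the support of $Y$. You also correctly identify the main obstacle — the functional $\mu\mapsto\sum_{(t,s)\in\mu}h(t,\mu)\one{s>1}$ is not bounded even when $h$ is bounded and compactly supported in $t$, because the number of atoms of $\mu$ in $\ball_c\times(1,\infty)$ is unbounded. However, your resolution of this obstacle has a genuine gap.

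You propose to handle the unboundedness by arguing that the number of relevant atoms has finite mean (``by Proposition~\ref{prop:S0t_is_Pareto} together with stationarity-type moment bounds on $X$'') and invoking dominated convergence. This does not work as stated: weak convergence in \eqref{eq:tail_process} gives no information about moments, and even if one could show $\ex[Y(\ball_c\times(1,\infty))]<\infty$, that alone does not give uniform integrability of $\Tu\tX(\ball_c\times(1,\infty))$ conditionally on $\{\xi>u\}$, which is what dominated convergence in $u$ would require. The paper sidesteps this entirely with a different device: it caps the functionals at a level $k$, introducing $H_1(\mu)=\sum_{(t,s)\in\mu}h(t,\mu)\one{\mu(\ball_a\times(1,\infty))\le k}\one{s>1}$ (and similarly $H_2$), so that both are bounded by $k\sup h$. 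Weak convergence with the continuous mapping theorem then gives $\ex H_1(Y)=\ex H_2(Y)$ for each fixed $k$, and letting $k\to\infty$ via monotone convergence recovers \eqref{eq:exc_stat} without any integrability hypothesis. Relatedly, you smooth the indicator $\one{s>1}$ and worry about choosing the smoothing level to avoid atoms; the paper instead keeps the sharp indicator and observes directly that $Y(\ball_c\times\{y\})=0$ a.s.\ for every $y$ (because $\eta$ has a continuous distribution and $Y=T_{\eta^{-\beta},\eta^{-1}}\Theta$), which makes the sharp indicator a.s.\ continuous at $Y$. The verification of a.s.\ continuity of $H_2$ also requires choosing $a$ so that a.s.\ no two atoms of $Y'$ are at distance exactly $a$; this subtlety is absent from your plan but is needed to apply the continuous mapping theorem to the shifted functional.

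In short: right roadmap, but the unboundedness is not dispatched by the argument you sketch, and the $k$-truncation plus monotone-convergence step is the missing idea.
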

\begin{proof}
  Since $\Tu$ scales the scores with $u^{-1}$, a score in $\Tu\tX$
  exceeding 1 corresponds to a score in $\tX$ exceeding $u$.
  Thus, by \eqref{eq:mecke},
  \begin{align*}
    \ex\Big[\tsum_{(t,s)\in \Tu\tX} & h(t,\Tu\tX)\one{s>1}
    \one{\xi>u}\Big]\\
    &=\ex\left[\tsum_{(t,s)\in \tX} h(t/r(u),\Tu\tX)
      \one{\tX(t)>u}
      \one{(\Tu\tX)(0)>1}\right]\\
    &=\ex\left[\tsum_{(t,s)\in \tX} h(-t/r(u),\Tu\shift{t}\tX)
      \one{(\shift{t}\tX)(-t)>u}
      \one{(\Tu\shift{t}\tX)(0)>1}\right]\\
    &=\ex\left[\tsum_{(t,s)\in \tX} h(-t/r(u),\shift{t/r(u)}\Tu\tX)
      \one{\tX(0)>u}
      \one{\tX(t/r(u))>u}\right]\\
    &=\ex\left[\tsum_{(t,s)\in \Tu\tX} h(-t,\shift{t}\Tu\tX)\one{\xi>u}
      \one{s>1}\right].
  \end{align*}
  We aim to show that both sides (if normalized by $\pr(\xi>u)$)
  converge to the corresponding sides of \eqref{eq:exc_stat}. However,
  a direct application of \eqref{eq:tail_process} is not possible
  since \rev{ the functionals
  \begin{align*}
    \mu &\mapsto \tsum_{(t,s)\in \mu}  h(t,\mu)\one{s>1} \, , \\
    \mu &\mapsto \tsum_{(t,s)\in \mu} h(-t,\shift{t}\mu)
      \one{s>1}    
  \end{align*}
  for $\mu\in \sA_1$ (see (\ref{eq:1})), are not bounded, even if $h$
  is bounded.  }
  
\rev{Fix a bounded continuous function $h:\R^d\times\sN\to[0,\infty)$
  such that, for some $c>0$, $h(t,\mu)=0$ for all $t\notin B_c$.
  Furthermore, fix $k\in\N$, $a>2c$, and consider the maps
  $H_1,H_2:\sA_{1}\to [0,\infty)$ given by
  \begin{align*}
    H_1(\mu)&:=\tsum_{(t,s)\in\mu} h(t,\mu)
    \one{\mu(\ball_a\times(1,\infty))\leq k}\one{s>1},\\
    H_2(\mu)&:=\tsum_{(t,s)\in\mu} h(-t,\shift{t}\mu)
    \one{(\shift{t}\mu)(\ball_a\times(1,\infty))\leq k}\one{s>1}.
  \end{align*}
  Both maps are bounded by $k\sup h$ since $H_1(\mu)=H_2(\mu)=0$
  whenever $\mu(B_c\times (1,\infty))>k$. Moreover, we claim that
  $H_1$ and $H_2$ are continuous on all $\mu \in \sA_1$ such that
  \begin{itemize}
  \item[(i)] $\|t-x\|\neq a$ for all $(t,s), (x,v) \in \mu$, and
  \item[(ii)]  $s\neq 1$ for all $(t,s)\in \mu$.
  \end{itemize}
  Denote by $C_a$ the set of all such $\mu$'s. If $a>2c$, the
  indicators $\one{(\shift{t}\mu)(\ball_a\times(1,\infty))\leq k}$,
  $t\in B_c$, depend on points of $\mu$ in $B_{2a}\times
  (1,\infty)$. Since for each $\mu \in C_a$ one can find $\eps>0$ such
  that $\mu(\partial (B_{2a+\eps} \times (1,\infty)))=0$, and since
  $B_{2a+\eps} \times (1,\infty)$ is in $\borel_{11}$, properties of
  $\borel_{11}$-vague convergence (see
  \cite[Proposition~2.8]{basrak:planinic:2019}) imply that maps $H_1$
  and $H_2$ are continuous on $C_a$.

  Since $Y$ has at most countably many points (as any other random
  element of $\sN$), it is easy to show that for all but at most
  countably many $a>0$, it almost surely holds that $\|t-x\|\neq a$
  for all $(t,s), (x,v) \in Y$.  Furthermore, since
  \begin{displaymath}
    Y=\sum_{(t,\theta)\in \Theta} \delta_{(t \eta^{\beta}, \theta
      \eta)} \, , 
  \end{displaymath} 
  where $\eta$ has a non-atomic distribution and is independent of the
  spectral tail configuration $\Theta$, it immediately follows that
  with probability zero $Y$ contains $(t,s)$ with $s=1$, equivalently,
  $\Theta$ contains $(t,\theta)$ with $\theta=\eta^{-1}$. 
  Thus, one can find $a>2c$ such that $\pr(Y\in C_a)=1$.
  
  Since $\ex [H_1(\Tu\tX) \mid \xi >u]= \ex [H_2(\Tu\tX) \mid \xi >u]$
  for all $u>0$ (argue exactly as in the beginning of the proof),
  applying \eqref{eq:tail_process} we obtain that
  $\ex H_1(Y)=\ex H_2(Y)$. Letting $k\to\infty$ yields
  \eqref{eq:exc_stat} for all nonnegative continuous bounded functions
  $h$ which vanish for $t\notin \ball_c$. We can further remove the latter restriction by letting $c\toi$. We claim that this ensures that (\ref{eq:exc_stat}) holds for all nonnegative measurable functions.
  
  Observe first that, when viewed as functions of $h$, both sides of
  (\ref{eq:exc_stat}) define a Borel measure on $\R^d \times \sN$ --
  denote them by $\nu_1$ and $\nu_2$. Since these two measures
  coincide on all nonnegative bounded continuous functions, then they
  coincide on the $\pi$-system of all open subsets of $\R^d \times
  \sN$. Measures $\nu_1$ and $\nu_2$ are in general unbounded, but
  they take finite values on the sets of the form $B_c\times C_{a,k}$
  where $C_{a,k}=\{\mu \in \sN : \mu(B_a \times (1,\infty)) \leq k\}$
  for $c>0, a>2c,  k\in \N$. While the set $C_{a,k}$ is not open in the $\borel_{11}$-vague topology, its subset
  \begin{align*}
      C_{a,k}':=\{\mu \in C_{a,k} : \mu(\partial(B_a \times (1,\infty))=0\}
  \end{align*}
  is open, which implies that 
  \begin{align*}
      \nu_1(\cdot \,  \cap (B_c \times C_{a,k}')) =  \nu_2(\cdot \,  \cap (B_c \times C_{a,k}')) \;\; \; \text{on $(\R^d\times \sN, \borel(\R^d\times \sN))$} \, , 
  \end{align*}
  for all $c>0, a>2c,  k\in \N$. As already explained above, for every fixed $c>0$ one can find $a=a(c)>2c$ such that $\pr(Y(\partial(B_a \times (1,\infty))=0)=1$. With this choice of $a$, $\nu_1$ and $\nu_2$ put zero mass on $C_{a.k}\setminus C_{a,k}'$, for all $k\in \N$, so in particular
  \begin{align*}
      \nu_1(\cdot \,  \cap (B_c \times C_{a,k})) =  \nu_2(\cdot \,  \cap (B_c \times C_{a,k})) \;\; \; \text{on $(\R^d\times \sN, \borel(\R^d\times \sN))$} \, , 
  \end{align*}
  for all $k\in \N$. Since $\mu(B_c\times (1,\infty))<\infty$ for all $\mu \in \sN$, we have that $ C_{a,k} \uparrow \sN$ as $k\toi$. 
  Thus, by letting $k\toi$, and then $c\toi$ we obtain that $\nu_1$ and $\nu_2$ coincide on $(\R^d\times \sN, \borel(\R^d\times \sN))$, which proves the claim.}
\end{proof}

\begin{remark}
  Exceedance-stationarity property (\ref{eq:exc_stat}) and
  the polar decomposition from \Cref{prop:spectral_is_indep} yield
  \begin{align*}
    \ex\left[\tsum_{(y,s)\in \Theta}
    h(-y/s^{\beta},\Tu[s]\shift{y}\Theta)\right]
    =\ex\left[\tsum_{(y,s)\in\Theta} h(y,\Theta)s^{\alpha}\right] 
  \end{align*}
  for every measurable $h:\R^d\times\sN \to [0,\infty)$, see
  \cite[Remark~2.11]{planinic:2021}. This property of the spectral
  tail configuration can be seen as the analogue of the
  \emph{time-change formula} known to characterize the class of all
  spectral tail processes (and thus tail processes) of regularly
  varying time series, see
  \cite{dombry:hashorva:soulier:2018,janssen:2019}.
\end{remark}

\section{Poisson approximation for extremal clusters}
\label{sec:poiss-appr-clust}

In what follows assume that $X$ is a stationary marked point process
on $\R^d$ of unit intensity with marks (scores) in $(0,\infty)$, which
admits a tail configuration $Y$ in the sense of \Cref{def:tail_conf}.
The main goal of this section is to describe the limiting behavior of
scores of $X$ in {$[0,\n]^d$} which exceed a suitably chosen high
threshold, as $\n$ and the threshold size tend to infinity.

\subsection{Extremal blocks}
\label{sec:extremal-blocks}

Let $(a_\n)_{\n>0}$ be a family of positive real numbers chosen such that
\begin{align}
  \label{eq:a_n}
  \lim_{\n\toi} \ex\Big[\tsum_{(t,s)\in X, t\in [0,\n]^d}
  \one{s>a_\n}\Big]=\lim_{\n\toi} \n^d \pr(\xi>a_\n)= 1 \, ,
\end{align} 
where the first equality follows from refined Campbell's theorem
(\ref{eq:refined_Campbell}). 
By (\ref{eq:S_0_is_RV}),
\begin{align}\label{eq:a_n_eps}
  \lim_{\n\toi} \n^d \pr(\xi>a_\n \epsilon)
  = \epsilon^{-\alpha}, \quad \epsilon>0 \, .
\end{align}

Let $(b_\n)_{\n>0}$ be a family of positive real numbers such that
$b_\n/\n\to 0$ as $\n\toi$. Divide the hypercube $[0,\n]^d$ into
blocks of side length $b_\n$ defined as
\begin{align}\label{eq:blocks_of_indices}
  J_{\n,\bi}:= \bigtimes_{j=1}^d [(i_j-1)b_\n, i_j b_\n]
  \subset \R^d
\end{align}
for $\bi=(i_1,\dots,i_d) \in I_{\n}:= \{1,\dots, k_\n\}^d$, where
$k_\n := \big \lfloor \n/b_\n\big\rfloor$.\footnote{Technically speaking, we are dividing the hypercube $[0, k_{\n}b_{\n}]^d$. However, in applications this edge effect is easily seen to be negligible.} For every
$\bi \in I_{\n}$, define
\begin{equation}\label{eq:extr-block}
  \bB_{\n,\bi}:=  X_{J_{\n,\bi}},
\end{equation}
which is the restriction of $X$ to $J_{\n,\bi}\times(0,\infty)$. 

For fixed $\tau$ and $\varepsilon$, think of \emph{clusters} of
extremal scores of $X$ as blocks $\bB_{\n,\bi}$ which contain at least
one score exceeding $a_\n \epsilon$. For every $\bi\in I_\tau$, by
(\ref{eq:a_n_eps}) and since $b_{\n}/\n \to 0$,
\begin{align*}
  \pr(\bB_{\n,\bi} \text{ is an extremal cluster})
  &=\pr(\tmax_{(t,s)\in \bB_{\n,\bi}} s>a_\n \epsilon)\\
  &\leq b_{\n}^d \pr(\xi>a_\n \epsilon) \to 0 \quad \text{as}\; \n\toi ,
\end{align*}
where \rev{we bound the probability with expectation, then used refined
Campbell's theorem (\ref{eq:refined_Campbell}) and, finally, \eqref{eq:a_n}.}

\subsection{Space for extremal blocks}
\label{sub_l_tilde}

Recall that $X$ is a random element of the space $\sN_{10}$ so that
the blocks $\bB_{\n, \bi}$ can be considered as elements of \rev{$\sN_{01}$}
which consists of simple counting measures on $\E$ with finite values
on $\R^d\times(\eps,\infty)$ for all $\eps>0$. Recall further that
$\sN_{01}\subset \sN=\sN_{11}$ and that, on $\sN_{01}$,
$\borel_{01}$-topology is stronger than the $\borel_{11}$-topology. We
now define a metric $\metric$, generating the $\borel_{01}$-vague
topology on $\sN_{01}$.

Let $\mu,\nu\in \sN_{01}$ be such that $\mu(\E),\nu(\E)<\infty$ (i.e.,
$\mu,\nu\in \sN_{00}$). If $\mu(\E)\neq \nu(\E)$ set
$\metric_0(\mu, \nu)=1$, and if $\mu(\E)=\nu(\E)=k\in \N_0$ and
$\mu=\tsum_{i=1}^k \delta_{(t_i,s_i)}$,
$\nu=\tsum_{i=1}^k \delta_{(t_i',s_i')}$, define
\begin{displaymath}
  \metric_0(\mu,\nu)=\min_{\Pi} \max_{i=1,\dots,k}
  \left[(|t_i - t_{\Pi(i)}'| \vee {|s_i - s_{\Pi(i)}'|}) \wedge 1
  \right] \, ,
\end{displaymath}
where the minimum is taken over all permutations $\Pi$ of
$\{1,\dots,k\}$. Note that $\metric_0$ is a metric generating the weak
(that is, $\borel_{00}$-vague) topology on $\sN_{00}$, see
\cite[Proposition~2.3]{schuhmacher08}.
While the authors of \cite{schuhmacher08} assume that the ground space
is compact, an easy argument justifies the claim for the space
$\E=\R^d\times(0,\infty)$. Observe also that $\metric_0$ is by
construction bounded by $1$ and shift-invariant, that is,
$\metric_0(\shift{y}\mu, \shift{y}\nu)=\metric_0(\mu,\nu)$ for all
$y\in \R^d$ and $\mu,\nu\in \sN_{00}$.

For general $\mu,\nu \in \sN_{01}$, set
\begin{align}\label{eq:metric_on_l0}
  \metric(\mu,\nu) := \int_{0}^{\infty}
  \metric_0(\mu^{1/u}, \nu^{1/u}) e^{-u} \dx u\,,
\end{align}
where $\mu^{1/u}$ (and similarly $\nu^{1/u}$) is the restriction of
$\mu$ on $\R^d\times (1/u,\infty)$. One can show that this is indeed a
metric on $\sN_{01}$ which is bounded by $1$, is shift-invariant, and
that it generates the $\borel_{01}$-vague topology. The latter claim
follows since $\mu_n$ converges to $\mu$ in $\sN_{01}$ with respect to
this topology if and only if $\mu_n^{1/u}$ converges weakly to
$\mu^{1/u}$ for Lebesgue almost all $u\in(0,\infty)$, see
\cite[Proposition~A2.6.II]{daley:verejones:2003} and
\cite{morariu:2018}.

We will actually work with a \emph{quotient space} of $\sN_{01}$.  For
$\mu,\nu\in \sN_{01}$, set $\mu\sim \nu$ if $\shift{y}\mu=\nu$ for
some $y\in \R^d$, and denote by $\tilde{\sN}_{01}$ the quotient space
of shift-equivalent counting measures in $\sN_{01}$. Denote by
\begin{displaymath}
  [\mu]=\{\nu \in \sN_{01}: \mu\sim \nu\}=\{\shift{y}\mu : y\in \R^d\}
\end{displaymath}
the equivalence class of $\mu\in \sN_{01}$. Define
\begin{align*}
  \tilde{\metric}([\mu],[\nu])
  :=\inf_{y,z\in \R^d} \metric(\shift{y}\mu,\shift{z}\nu),
  \quad  \mu, \nu\in \sN_{01}\, .
\end{align*}

\begin{lemma}
  Function $\tilde{\metric}$ is a metric on $\tilde{\sN}_{01}$, and
  $(\tilde{\sN}_{01},\tilde{\metric})$ is a separable metric
  space. Moreover, $\tilde{\metric}([\mu_n], [\mu])\to 0$ as $n\toi$
  (denoted by $[\mu_n]\vto [\mu]$) for $\mu_n,\mu\in \sN_{01}$
  if and only if there exist $y_n\in \R^d$, $n\in \N$, such that
  $\shift{y_n}\mu_n \vto \mu$ in $\sN_{01}$.
\end{lemma}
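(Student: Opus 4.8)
The plan is to regard $\tilde{\metric}$ as the quotient pseudo-metric induced by the action of the shift group $\R^d$ on $(\sN_{01},\metric)$, to dispose quickly of the pseudo-metric axioms and the ``easy'' direction of the convergence characterization, and then to concentrate on the one genuinely substantive point, namely that the orbits of this action are closed. Since $\metric$ is shift-invariant and the infimum defining $\tilde{\metric}$ runs over all shifts of \emph{both} arguments, $\tilde{\metric}([\mu],[\nu])$ is independent of the chosen representatives, is symmetric, vanishes when $[\mu]=[\nu]$, and is bounded by $1$. The triangle inequality is the standard argument for quotient pseudo-metrics: given $\eps>0$, choose near-optimal shifts realizing $\tilde{\metric}([\mu],[\nu])$ and $\tilde{\metric}([\nu],[\rho])$, use shift-invariance of $\metric$ to translate so that the two chosen representatives of $\nu$ agree, and then invoke the triangle inequality for $\metric$. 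For the convergence statement, ``$\Leftarrow$'' is immediate because the quotient map $q\colon(\sN_{01},\metric)\to(\tilde{\sN}_{01},\tilde{\metric})$, $q(\mu)=[\mu]$, is $1$-Lipschitz (take both shifts equal to $0$ in the infimum), so $\tilde{\metric}([\mu_n],[\mu])=\tilde{\metric}([\shift{y_n}\mu_n],[\mu])\le\metric(\shift{y_n}\mu_n,\mu)$. For ``$\Rightarrow$'', for each $n$ I would pick shifts $y_n,z_n$ with $\metric(\shift{y_n}\mu_n,\shift{z_n}\mu)<\tilde{\metric}([\mu_n],[\mu])+1/n$; by shift-invariance $\metric(\shift{y_n-z_n}\mu_n,\mu)<\tilde{\metric}([\mu_n],[\mu])+1/n\to0$, so $w_n:=y_n-z_n$ works. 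The same device shows $\tilde{\metric}([\mu],[\nu])=0$ implies $\shift{w_n}\mu\vto\nu$ in $\sN_{01}$ for suitable $w_n\in\R^d$.

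The main obstacle is to upgrade this last implication to $[\mu]=[\nu]$, i.e.\ to prove that each orbit $\{\shift{y}\mu:y\in\R^d\}$ is closed in $(\sN_{01},\metric)$; this is exactly what makes $\tilde{\metric}$ non-degenerate. Suppose $\shift{w_n}\mu\vto\nu$. If $\mu$ is the null measure, then $\shift{w_n}\mu=\mu=\nu$ and we are done, so assume $\mu$ carries an atom $(t_0,s_0)$ with $s_0>0$. Recalling (from the discussion after \eqref{eq:metric_on_l0}) that convergence in $(\sN_{01},\metric)$ is equivalent to weak convergence of the score-truncations $(\cdot)^{1/u}$ for Lebesgue-almost every $u>0$, fix some level $u_0>1/s_0$ for which $(\shift{w_n}\mu)^{1/u_0}\to\nu^{1/u_0}$ weakly on $\R^d\times(1/u_0,\infty)$; since restriction and shift commute, $(\shift{w_n}\mu)^{1/u_0}=\shift{w_n}\big(\mu^{1/u_0}\big)$ is the fixed finite configuration $\mu^{1/u_0}$ translated by $w_n$, so all these measures have total mass $k:=\mu^{1/u_0}(\E)\in\{1,2,\dots\}$, and testing weak convergence against the constant $1$ forces $\nu^{1/u_0}$ to have exactly $k$ atoms. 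The crux is to show $(w_n)$ is bounded: were $|w_n|\to\infty$ along a subsequence, the atom $(t_0-w_n,s_0)$ of $\shift{w_n}(\mu^{1/u_0})$ would eventually leave the support of any fixed compactly supported continuous $g\colon\R^d\to[0,1]$, so picking $g\equiv1$ on a ball containing the (bounded) projection of $\supp\nu^{1/u_0}$ and testing weak convergence against the bounded continuous map $(t,s)\mapsto g(t)$ would give $\limsup_n\int g\,\dx(\shift{w_n}\mu)^{1/u_0}\le k-1<k=\int g\,\dx\nu^{1/u_0}$, a contradiction. Hence, along a further subsequence, $w_{n_k}\to w^*$; by the continuity of the shift on $\sN_{01}$ noted in Section~\ref{sec:main-definition} we get $\shift{w_{n_k}}\mu\vto\shift{w^*}\mu$, and uniqueness of limits in the metric space $(\sN_{01},\metric)$ forces $\nu=\shift{w^*}\mu\in[\mu]$. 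This yields non-degeneracy, so $\tilde{\metric}$ is a genuine metric.

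Separability of $(\tilde{\sN}_{01},\tilde{\metric})$ then follows painlessly: $q$ is a continuous surjection (indeed $1$-Lipschitz) and $(\sN_{01},\metric)$ is separable, its topology being the $\borel_{01}$-vague topology already noted to be separable and metrizable, so the image under $q$ of a countable dense set is countable and dense. I expect the only genuinely delicate point to be the boundedness-of-shifts step in the second paragraph — the ``escaping mass'' argument underlying orbit-closedness — with everything else amounting to routine unwinding of the definitions.
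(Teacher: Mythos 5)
Your proof is correct and follows essentially the same route as the paper's: both establish the pseudo-metric properties and separability via the quotient map, and both reduce non-degeneracy of $\tilde{\metric}$ to showing that a sequence of shifts $z_n$ with $\metric(\shift{z_n}\mu,\nu)\to 0$ must be bounded, exploiting the fact that measures in $\sN_{01}$ have only finitely many points above any score level $\eps>0$ (your ``escaping mass'' step is exactly the detail the paper flags as crucial but leaves implicit, deferring the pseudo-metric and separability bookkeeping to an external reference). The only cosmetic difference is that the paper first invokes shift-invariance to collapse the two-sided infimum to a one-sided one via \eqref{eq:tilde_d_inter1} and then reads the convergence equivalence directly from that reduction, whereas you argue the $\Rightarrow$ direction by picking near-optimal pairs $(y_n,z_n)$ and subtracting --- the same thing in slightly different clothing.
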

\begin{proof}
  Since $\metric$ is shift-invariant,
  \begin{align}\label{eq:tilde_d_inter1}
    \tilde{\metric}([\mu],[\nu])
    = \inf_{z\in \R^d} \metric(\mu, \shift{z}\nu)
    = \inf_{z\in \R^d} \metric(\shift{z}\mu, \nu) 
  \end{align}
  for all $\mu, \nu\in \sN_{01}$. \rev{It is now easy to show that
    (\ref{eq:tilde_d_inter1}) implies that $\tilde{\metric}$ is a
    pseudo-metric on $\tilde{\sN}_{01}$, and that
    $(\tilde{\sN}_{01},\tilde{\metric})$ is separable since
    $(\sN_{01}, \metric)$ is separable. This follows by a direct
    application of \cite[Lemma~2.5.1]{thesis}; the only non-trivial
    step is to show that $\tilde{\metric}$ satisfies the triangle
    inequality.}
  
  \rev{To show that $\tilde{\metric}$ is actually a metric, assume
    that $\tilde{\metric}([\mu], [\nu])=0$ for some
    $\mu,\nu\in \sN_{01}$. By (\ref{eq:tilde_d_inter1}), there exists
    a sequence $(z_n)_{n\in\N} \subseteq \R^d$ such that
    $\metric(\shift{z_n}\mu, \nu)\to 0$. If $\nu$ is the null measure,
    then it follows easily that $\mu$ is also the null measure, so that
    $[\mu]=[\nu]$. If $\nu$ is not the null measure, for $\eps>0$
    small enough, we have that
    $1\leq \nu(\R^d \times (\eps, \infty))<\infty$. Since
    $\shift{z_n}$ only translates positions, the sequence $(z_n)_n$
    must be bounded. Indeed, otherwise one would have that for
    infinitely many $n$'s,
    $\metric_0((\shift{z_n}\mu)^{1/u}, \nu^{1/u})=1$ for
    $1/u\leq \eps$, and, in particular,
    $\metric(\shift{z_n}\mu, \nu) \geq e^{1/\eps} - 1 >0$.  Thus,
    there exists a subsequence $(z_{n_k})_{k\in \N}$ such that
    $z_{n_k} \to z \in \R^d$ as $k\toi$. Since
    $\lim_{k\toi}\metric(\shift{z_{n_k}}\mu, \shift{z}\mu)\to 0$, and
    since $\metric$ is a metric, we conclude that
    $\shift{z}\mu = \nu$, i.e.\ $[\mu]=[\nu]$.}
\end{proof}

Next, recall the scaling operator $\Tu$ defined at
(\ref{eq:scaling0}). Observe that
\begin{align*}
  \Tu \shift{y}\mu = \shift{y/r(u)} \Tu \mu 
\end{align*} 
for all $u>0$, $y\in \R^d$ and $\mu\in \sN_{01}$. Since
$[\Tu\mu]=[\Tu\nu]$ whenever $\mu\sim\nu$, the scaling operator
$\Tu[u] [\mu]:=[\Tu\mu]$ is well defined on $\tilde{\sN}_{01}$. For
$\mu\in\sN_{01}$, denote by
\begin{displaymath}
  M(\mu):=\tmax_{(t,s)\in\mu} s
\end{displaymath} 
the \emph{maximal score} of $\mu$, which is necessarily
finite. Observe that the maximal score is shift-invariant and thus it
is well defined on $\tilde{\sN}_{01}$.

Finally, for every $\mu\in \sN_{01}$ such that $M(\mu)>0$ (that is,
$\mu$ is not the null measure), define
\begin{align}
  \label{eq:first_maximum_anchor}
  A^{\mathrm{fm}}(\mu)=\min \{t: (t,s)\in \mu, s=M(\mu)\} \, ,
\end{align}
where the minimum is taken with respect to the lexicographic order on
$\R^d$. Thus, $A^{\mathrm{fm}}(\mu)$ is the position where the maximal
score of $\mu$ is attained; if there is a tie, the first position is
chosen. This is well defined since $\mu$ has at most finitely many
scores exceeding any $\varepsilon>0$.

\subsection{Main result}

For each $\n>0$ consider the point process 
\begin{align}\label{eq:PP_blocks}
  N_\n = \left\{\big(\bi b_{\n} / \n \, , \;
  \Tu[a_{\n}][\bB_{\n,\bi}]\big) : \bi \in I_{\n}\right\}
\end{align}
of (rescaled) blocks together with their ``positions''; observe that
$\bi b_{\n}$ is one of the corners of $J_{\n,\bi}$ defined at
(\ref{eq:blocks_of_indices}).  While the definition of $N_\n$ also
depends on the choice of the block size length $b_{\n}$, for
simplicity we do not include it in the notation. The exact choice of
the position component is immaterial: subsequent results hold if it is
arbitrarily chosen inside the block using any deterministic or
randomized procedure.

Let $\tsN_{01}^{*}$ be equal to the space $\tsN_{01}$, but without the
null measure. Furthermore, let $\bN$ be the space of all counting
measures on $[0,1]^d \times \tsN_{01}^{*}$
which are finite on all Borel sets
$B\subseteq [0,1]^d \times \tsN_{01}^{*}$ such that, for some
$\eps=\eps(B)>0$, $M(\mu)>\eps$ for all $(t,\mu)\in B$.  Equip $\bN$
with the vague topology generated by the same family of sets.

\begin{theorem}\label{thm:main_result}
  Let $X$ be a stationary marked point process on $\E$ which admits a
  tail configuration $Y$ in the sense of \Cref{def:tail_conf} with
  tail index $\alpha>0$ and a scaling function $r$ of scaling index
  $\beta\in\R$. Let $\Theta$ be the corresponding spectral tail configuration
  defined in (\ref{eq:spectral_tail_conf}). Finally, fix a family of
  block side lengths $(b_{\n})_{\n>0}$.

  If Assumptions~\ref{hypo:t_n}, \ref{hypo:AC} and
  \ref{hypo:AI_of_extr_blocks} below hold, then
  $\pr(Y \in \sN_{01})= \pr(\Theta \in \sN_{01})=1$ and
  \begin{align}\label{eq:pp_convergence}
    N_{\n} \dto N
    := \sum_{i=1}^{\infty} \delta_{(U_i\, , \,
    T_{\Gamma_i^{\beta/\alpha},\Gamma_i^{1/\alpha}} [\bQ_i])}  \quad \text{ as } \n \toi
  \end{align}
  in $\bN$, where $\{(U_i,\Gamma_i,Q_i),i\geq1\}$ are points of the
  Poisson process on $[0,1]^d\times \R_+\times \sN_{01}$ with the
  intensity measure being the product of the Lebesgue measure on
  $[0,1]^d$, the Lebesgue measure on $\R_+$ scaled by
  \begin{align}\label{eq:extremal_index}
    \vartheta:=\pr(A^{\mathrm{fm}}(\Theta)=0)\in (0,1] \,,
  \end{align}
  and the probability distribution of a random element $\bQ$ in
  $\sN_{01}$ given by
  \begin{align}
    \label{eq:normalized_typical_cluster_cond}
    \pr(\bQ\in \cdot \,)
    = \pr(\Theta \in \cdot \mid A^{\mathrm{fm}}(\Theta)=0) \,.
  \end{align}
\end{theorem}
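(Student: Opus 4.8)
The plan is to follow the classical blocking strategy for extremes of dependent sequences (as in \cite{basrak18,kulik:soulier:2020}), adapted to the spatial marked setting, with the three Assumptions doing the work of controlling (i) the block side length regime, (ii) anti-clustering within a block, and (iii) asymptotic independence across blocks. First I would establish the preliminary fact that $\pr(Y\in\sN_{01})=\pr(\Theta\in\sN_{01})=1$; this should follow from the anti-clustering Assumption~\ref{hypo:AC}, which intuitively says that high scores do not accumulate at infinite distance from a given exceedance, so that the tail configuration, a priori only in $\sN=\sN_{11}$, in fact has finitely many points above each level $\eps$ globally, i.e.\ lies in $\sN_{01}$; the equivalence with $\Theta$ is immediate from $Y=T_{\eta^\beta,\eta}\Theta$ and $\eta<\infty$ a.s. Having this, the limiting object $N$ in \eqref{eq:pp_convergence} is well defined as a random element of $\bN$.

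Next I would set up the convergence of Laplace functionals. Since $\bN$ is a space of counting measures with a vague topology generated by a boundedness, $N_\n\dto N$ is equivalent to $L_F(N_\n)\to L_F(N)$ for test functions $F$ supported on sets of the form $\{(t,\mu):M(\mu)>\eps\}$ — equivalently, $F(t,[\mu])=0$ unless $M(\mu)>\eps$ and $F$ continuous. For such $F$,
\begin{displaymath}
  L_F(N_\n)=\ex\prod_{\bi\in I_\n}\exp\big(-F(\bi b_\n/\n,\Tu[a_\n][\bB_{\n,\bi}])\big).
\end{displaymath}
The first step is to replace the product over all of $I_\n$ by a product over disjoint groups of blocks separated by small "guard" regions and to use Assumption~\ref{hypo:AI_of_extr_blocks} (asymptotic independence of extremal blocks) to factorize the expectation into a product of single-block expectations, up to a vanishing error; this is a telescoping/coupling argument using that only blocks that are extremal (have $M>\eps$) contribute and such blocks are rare by the computation preceding Section~\ref{sub_l_tilde}. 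After factorization, a single-block expectation is $\ex\exp(-F(t_{\bi,\n}^\ast,\Tu[a_\n][\bB_{\n,\bi}]))=1-\ex[1-e^{-F(\cdot)}]$, and $\ex[1-e^{-F(\cdot)}]$ is, up to $o(b_\n^d/\n^d)$ summed over $I_\n$, of order $b_\n^d\pr(M(\bB_{\n,\bi})>a_\n\eps)$ times a conditional expectation. Here I would use \eqref{eq:a_n_eps} together with a "within-block tail configuration" computation: conditionally on the block being extremal, the rescaled block converges to the tail configuration truncated by an anchoring procedure.

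The crux — and the step I expect to be the main obstacle — is identifying the limit of the single-block contribution in terms of $(\vartheta,\bQ)$ and assembling the Poisson intensity. The mechanism is the anchoring function $A^{\mathrm{fm}}$: within an extremal block one expects (asymptotically, under Assumption~\ref{hypo:AC}) exactly one "cluster", and conditioning on the first-maximum anchor landing at $0$ both produces the thinning factor $\vartheta=\pr(A^{\mathrm{fm}}(\Theta)=0)$ and yields the conditional law $\bQ$ of the normalized cluster shape via \eqref{eq:normalized_typical_cluster_cond}; the Pareto($\alpha$) size variable $\eta$, which enters through $Y=T_{\eta^\beta,\eta}\Theta$, is exactly what becomes the $\Gamma_i^{1/\alpha}$-scaling of marks (and $\Gamma_i^{\beta/\alpha}$ of positions) once one passes from the level-$a_\n$ normalization to the exceedance count via \eqref{eq:a_n_eps} — the change of variables $\Gamma=\eps^{-\alpha}\mapsto$ Lebesgue on $\R_+$ turns $\pr(\eta>y)=y^{-\alpha}$ into the Poisson intensity on $\R_+$ scaled by $\vartheta$, and the uniform position $U_i$ comes from $\bi b_\n/\n$ being asymptotically uniform on $[0,1]^d$. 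Concretely I would compute
\begin{displaymath}
  \lim_{\n\toi}\textstyle\sum_{\bi\in I_\n}\ex\big[1-e^{-F(\bi b_\n/\n,\Tu[a_\n][\bB_{\n,\bi}])}\big]
  =\int_{[0,1]^d}\!\!\int_0^\infty \ex\big[1-e^{-F(u,\,T_{\gamma^{\beta/\alpha},\gamma^{1/\alpha}}[\bQ])}\big]\,\vartheta\,\dx\gamma\,\dx u,
\end{displaymath}
using Assumption~\ref{hypo:AC} to pass from $\bB_{\n,\bi}$ to the tail configuration, \Cref{prop:spectral_is_indep} to split off $\eta$, and \Cref{prop:exceedance-s} (exceedance-stationarity) to justify that the anchoring at the first maximum is the correct "representative choice" in the quotient space $\tsN'_{01}$ so that $\vartheta$ and $\bQ$ are well defined — this is where \eqref{eq:extremal_index} lands in $(0,1]$. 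Combining this with the factorization gives $L_F(N_\n)\to\exp(-\int(1-e^{-F})\,\dx\Lambda)$ with $\Lambda$ the claimed product intensity, which by Laplace-functional uniqueness on $\bN$ identifies the limit as $N$ in \eqref{eq:pp_convergence}. The delicate points throughout are measurability and continuity of $A^{\mathrm{fm}}$ and $M$ on the quotient space (handled as in Section~\ref{sub_l_tilde} and \cite{basrak:planinic:2020}), and ensuring all boundary/atom-free choices (as in the proof of \Cref{prop:exceedance-s}) can be made simultaneously for the countably many levels $\eps$ that arise.
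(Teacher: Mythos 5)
Your high-level plan matches the paper's: establish $\pr(Y\in\sN_{01})=1$ from anti-clustering, reduce to Laplace functionals with test functions supported on $\{M>\eps\}$, use Assumption~\ref{hypo:AI_of_extr_blocks} to factor across blocks (with the guard-strip removal to pass from trimmed to original blocks), and identify the Poisson limit via convergence of the intensity measure and a Grigelionis-type theorem. This is essentially the road the paper takes in Section~\ref{subs:proof_main_result}, so the strategy is sound.

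The step you flag as ``the main obstacle'' is indeed where the real work is, and your description of it is where the proposal falls short of a proof. You assert that ``conditionally on the block being extremal, the rescaled block converges to the tail configuration truncated by an anchoring procedure'' and then write down the target integral formula for the intensity, but you give no mechanism for extracting the constant $\vartheta$ or the law of $\bQ$ from the definition of the tail configuration. The paper's derivation of this (Propositions~\ref{prop:conv_to_tail_in_N_star}--\ref{prop:intensity_convergence}) hinges on two specific ideas that your sketch never reaches. First, the convergence~\eqref{eq:tail_process} has to be upgraded from the $\borel_{11}$-vague topology to the $\borel_{01}$-vague topology on the restriction of $\tX$ to a whole block of side $\sim b_\n$, i.e.\ one must show $T^\n_y(\tX_{D_\n})\wto Y$ on $\sN_{01}$ for regions $D_\n$ comparable to $\ball_{b_\n}$; this is precisely where \Cref{hypo:AC} enters, via the metric bound~\eqref{eq:useful_trick}, and without it the conditional law of the rescaled block has no meaningful limit on the space you are working in. Second, the identity $\one{M(X_\n)>a_\n y}=\tsum_{(t,s)\in X}\one{t\in[0,b_\n]^d,\,A_{a_\n y}(X_\n)=t,\,X(t)>a_\n y}$ combined with the refined Campbell formula~\eqref{eq:refined_Campbell} is what converts the block-maximum probability into $b_\n^d\,\pr(\xi>a_\n y)\cdot\pr(A_1(Y)=0)$ and simultaneously produces $\pr([Y]\in\cdot\mid A_1(Y)=0)$ as the conditional cluster law; this Palm ``de-biasing'' is the step that makes $\vartheta$ appear, and it is absent from your computation. (Relatedly, the paper performs this with the first-exceedance anchor $A_1$, for which the Campbell decomposition is clean, and only invokes \Cref{prop:anchors_are_equivalent} at the end to trade to $A^{\mathrm{fm}}$ and obtain the stated form~\eqref{eq:extremal_index}--\eqref{eq:normalized_typical_cluster_cond}; working directly with $A^{\mathrm{fm}}$ as you propose is possible but you would still need the Campbell step.) So: same architecture, but the proposal asserts the intensity limit rather than proving it, and the two proof ideas that actually produce $\vartheta$ and $\bQ$ — the $\sN_{01}$-strengthening of the tail convergence over growing windows and the refined-Campbell decomposition of the block maximum via an anchoring function — are missing from your argument.
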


The proof of \Cref{thm:main_result} is presented in
\Cref{subs:proof_main_result}. The mark component in limiting point
process $N$ from \eqref{eq:pp_convergence} can be viewed as the
scaling transformation of the equivalence classes $[Q_i]$ by
multiplying the scores with $y_i:=\Gamma_i^{-1/\alpha}$ and positions
with $y_i^\beta$. Note that $\{y_i,i\geq1\}$ form a Poisson process on
$(0,\infty)$ with intensity ${\vartheta} \alpha y^{\alpha-1}\dx y$.
Furthermore, note that the point process $\bQ$ with distribution
(\ref{eq:normalized_typical_cluster_cond}) necessarily satisfies
$M(Q)=1$ almost surely.

If $\bB_{\n, (j)}$ and $\bi_{(j)} b_{\n}$, $j=1,2,\dots, k_{\n}^d$,
denote the original blocks and their positions, relabelled so that
\begin{align*}
  M(\bB_{\n, (1)}) \geq M(\bB_{\n, (2)}) \geq \dots \geq M(\bB_{\n,
  (k_{\n}^d)})  \, ,  
\end{align*} 
the continuous mapping theorem applied to (\ref{eq:pp_convergence})
yields the convergence
\begin{align*}
  \Big(\bi_{(j)}b_{\n}/\tau ,
  \Tu[a_{\n}] [\bB_{\n, (j)}] \Big)_{j=1,\dots, k}
  \dto \Big(U_{i}, T_{\Gamma_i^{\beta/\alpha},
  \Gamma_i^{1/\alpha}}[\bQ_i]\Big)_{i=1,\dots, k} \quad \text{as}\; \n\toi
\end{align*}
in the space $([0,1]^d\times \tsN_{01})^{k}$ for every fixed $k\geq 1$.
In particular, for $k=1$ this (modulo some edge effects which are
easily shown to be negligible) implies that
\begin{align}\label{eq:easy_consq}
  \lim_{\n \toi}\pr\big(a_{\n}^{-1}\tmax_{(t,s)\in X, t\in [0,\n]^d}
  s  \leq y\big) = \pr(\Gamma_1^{-1/\alpha}\leq y)
  = e^{-\vartheta y^{-\alpha}} \, , \quad y>0 \, . 
\end{align} 
Thus, the limiting distribution of the rescaled maximal score of $X$
in $[0,\n]^d$ is the nonstandard Fr\'echet distribution. Since the
point process of locations in $X$ is assumed to be a unit rate
stationary process and since the marginal score satisfies
(\ref{eq:a_n_eps}), the value of $\vartheta$ deserves to be called the
\emph{extremal index} of $X$.

The second ingredient of the limiting point process in
(\ref{eq:pp_convergence}) is the distribution of $\bQ$ which can be
seen as the asymptotic distribution of a normalized \emph{typical}
cluster of exceedances of $X$. \rev{In contrast, the tail
  configuration $Y$ is not typical. Since it contains what can be intuitively understood as a uniformly selected exceedance in   $X$, the  distribution of $Y$ is biased towards clusters with more exceedances.}
In fact, the relationship between the tail configuration
and the typical cluster of exceedances of $X$ is similar to the
relationship between a stationary point process on $\R^d$ and
its Palm version, and this relationship is discussed in detail in
\cite{planinic:2021} for random fields over $\Z^d$.

\begin{figure}[h]
	\centering
	\includegraphics[scale=0.45]{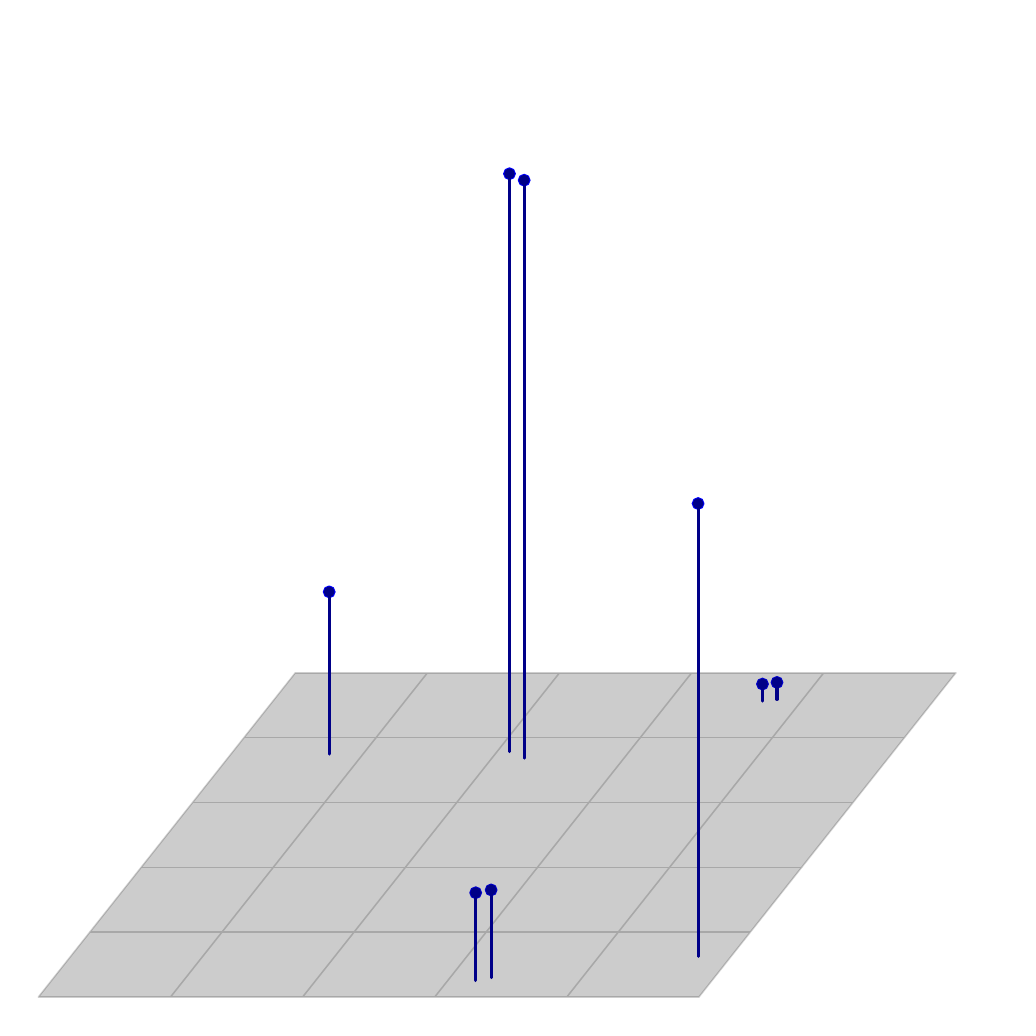}
	\includegraphics[scale=0.45]{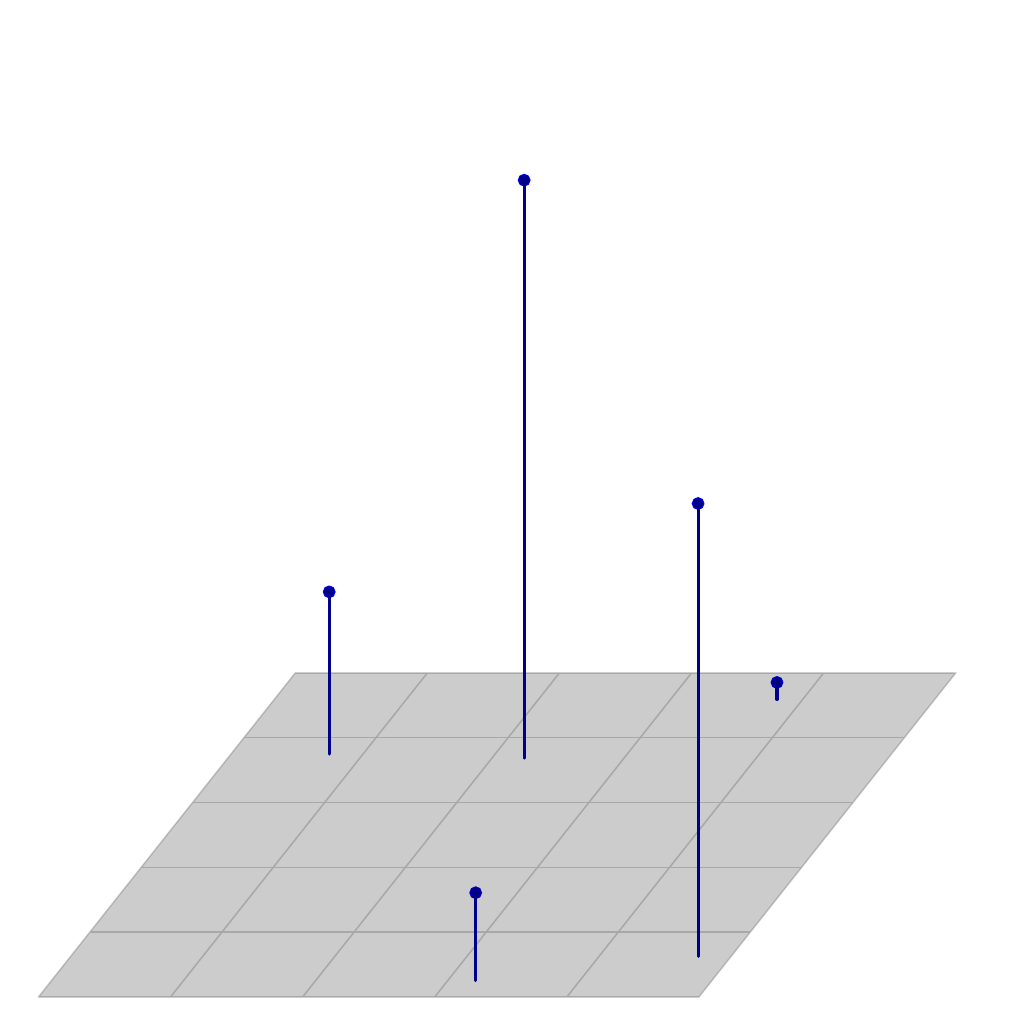}
	\caption{
	\rev{	Simulated exceedances of scores over a fixed threshold and an illustration of the block limit of $N_{\n}$ in Theorem~\ref{thm:main_result} for
		the case with  and  without clustering. The model is taken from Example~\ref{exmp:simple} with $p=1/2$ and $p=0$.} }
	\label{fig:Main1}
\end{figure}

\rev{
\begin{example}[continuation of Example~\ref{exmp:simple}]
  Recall that for the stationary point process $X$ of our initial
  example, we found the tail configuration in the form
  $Y = \delta_{(0,\eta)} + I \delta_{(h_0,\eta)}$, with the distribution
  of $I,\, h_0$ and $\eta$ described above. By
  \eqref{eq:spectral_tail_conf} the spectral tail configuration has
  the form $\Theta = \delta_{(0,1)} + I \delta_{(h_0,1)}$. For $p =1/2$ we
  have $\pr(I=1) = 2/3$, therefore,
  \begin{displaymath}
    \vartheta=\pr(A^{\mathrm{fm}}(\Theta)=0) = \pr(I=0) + \pr(I=1)
    \pr(\min(0,h_0) = 0) = 2/3.
  \end{displaymath}
  Finally, it is not difficult to show that
  $Q = \delta_{(0,1)} + \vep \delta_{(h^+_0,1)}$ where $h^+_0$ has the
  distribution of $h_0$ restricted to the points larger than 0 in the
  lexicographical order on $\R^d$ and $\vep$ is an independent
  Bernoulli random variable with $\pr(\vep=1)=1/2$.
\end{example}
}

\subsection{Assumptions of \Cref{thm:main_result}}
\label{sec:assumpt-crefthm:m}

Fix a family of positive real numbers $(b_{\n})_{\n>0}$ which
represent the block side lengths.

\begin{hypothesis}[\textbf{{on scaling}}]
  \label{hypo:t_n}
  The family $(b_{\n})_{\n>0}$ satisfies
  \begin{align}\label{eq:T_lambda}
    \lim_{\n\toi} \frac{r(a_{\n})}{b_\n} = \lim_{\n\toi} \frac{b_\n}{\n} = 0 \, .
  \end{align}
\end{hypothesis}

By regular variation of the scaling function $r$, (\ref{eq:T_lambda})
yields that $r(a_{\n}\epsilon)/b_\n\to 0$ as $\n\toi$ for all
$\epsilon>0$ as well.  If the scaling function $r$ is a constant (as
it always was for the case of random fields over $\Z^d$), then
necessarily $b_{\n}\toi$. However, if $r(a_{\n})\to 0$, one can take
$(b_{\n})_{\n}$ to be a constant or even such that $b_{\n}\to 0$.

Recall that, for $B\in \borel(\R^d)$, we denote by $\mu_B$ the
restriction of $\mu\in \sN$ to $B\times (0,\infty)$. Furthermore,
recall that $\tX$ denotes a Palm version of $X$ and $\xi=\tX(0)$.
 
\begin{hypothesis}[\textbf{{on dependence within a block/}}
  \textbf{anticlustering}]
  \label{hypo:AC}
  For all $\epsilon,\delta,c>0$, 
  \begin{align}\label{eq:AC}
    \lim_{u\toi}\limsup_{\n\toi} \pr\big(M(\tX_{C_{\n,u}})
    >a_\n \delta \mid \xi>a_\n \epsilon \big) = 0 \, , 
  \end{align}
  where
  \begin{displaymath}
    C_{\n,u}:=\ball_{b_\n c}\setminus \ball_{r(a_\n \epsilon)u},
    \quad \n,u>0 .
  \end{displaymath}
\end{hypothesis}
\rev{Assumption \ref{hypo:AC} 
concerns the maximum score of $\tX$ in the annulus $C_{\n,u} =
\ball_{b_\n c}\setminus \ball_{r(a_\n \epsilon)u}$, which, for
a sufficiently large $u$, is far away from the origin and still
relatively small compared to the size of $[0,\n]^d$. The assumption
simply states that if we condition on a large score at the origin, we are
unlikely to see another large value in such an area. Roughly speaking,
it prevents  clustering of large scores beyond a certain distance. For
a further illustration of this condition see
Example~\ref{exmp:simple4}. }

Our next and final assumption essentially requires that extremal
blocks asymptotically behave as if they were independent. To state it,
we introduce some additional notation.  Let $\sF$ be the
family of all shift-invariant measurable functions
$f:\sN_{01} \to [0,\infty)$ such that $f(0) = 0$, and for which there
exists some $\delta>0$ such that, for all $\mu\in \sN_{01}$,
\begin{equation}
  \label{eq:fmudelta}
  f(\mu)=f(\mu^{\delta}),
\end{equation}
where $\mu^{\delta}$ denotes the restriction of $\mu$ to
$\R^d\times (\delta,\infty)$, that is, the value of $f$ depends only on
scores of $\mu$ which are larger than $\delta$.

For a family of positive real numbers $(l_{\n})_{\n}$, for every
$\n>0$ and $\bi=(i_1,\dots,i_d)\in I_{\n}$, cut off the edges of
$J_{\n, \bi}$ by $l_{\n}$, that is, consider
\begin{align}\label{eq:trimmed_blocks}
  \hat{J}_{\n,\bi}:= \bigtimes_{j=1}^d [(i_j-1)b_\n +l_{\n},
  i_j b_\n - l_{\n}]\, , 
\end{align}
and define the corresponding trimmed block
\begin{align*}
  \widehat{\bB}_{\n,\bi}:=  X_{\hat{J}_{\n,\bi}}.
\end{align*}

\begin{hypothesis}[\textbf{{on dependence between extremal blocks}}]
  \label{hypo:AI_of_extr_blocks}
  There exists a family $(l_{\n})_{\n>0}$, satisfying
  \begin{align*}
    \lim_{\n\toi}\frac{l_{\n}}{b_{\n}} = 0 \, , 
  \end{align*}
  and such that 
  \begin{multline}\label{eq:AI_blocks_cond}
    \ex\Big[\tprod_{\bi \in I_{\n}}
    \exp\left\{-f_{\n,\bi}(\Tu[a_{\n}]\widehat{\bB}_{\n,\bi})\right\}\Big] -
    {\tprod_{\bi\in I_{\n}}\ex\Big[ \exp\left\{-f_{\n,
            \bi}(\Tu[a_{\n}]\widehat{\bB}_{\n,\bi})\right\}\Big]}\to 0
  \end{multline}
  as $\n\toi$ for any family of functions $f_{\n, \bi} \in \sF$,
  $\n>0, \bi \in I_{\n}$, which satisfy \eqref{eq:fmudelta} for the same
  $\delta>0$.
\end{hypothesis}

Informally speaking, (\ref{eq:AI_blocks_cond}) holds if extremal
scores are only locally dependent. The crucial issue in
(\ref{eq:AI_blocks_cond}) is that the value of
$f_{\n, \bi}(\Tu[a_{\n}]\widehat{\bB}_{\n,\bi})$ depends only on
points $(t,s)\in \widehat{\bB}_{\n,\bi}$ with $s>a_{\n}\delta$, and
that for any $(t,s)\in \widehat{\bB}_{\n,\bi}$ and
$(t',s')\in \widehat{\bB}_{\n,\bi'}$ for $\bi\neq \bi'$, one has
$|t-t'|\geq l_{\n}$.

\rev{
\begin{example}[continuation of Example~\ref{exmp:simple}]
  \label{exmp:simple4}
  Recall that
  $ X = \sum \delta_{(t,\zeta_t)} + \vep_t \delta_{(t+h_t,\zeta_t)} $
  where i.i.d. points $(h_t,\vep_t,\zeta_t)$ have independent
  components with the first having bounded support, i.e.
  $\pr(\| h\| \leq H) = 1$ for some constant $H\geq 0$. If we set
  $r\equiv 1$, Assumption \ref{hypo:t_n} holds for any $b_\n \toi$ and
  $b_\n= o(\n)$. Assumption \ref{hypo:AC} is also easily verified
  in this context:
simply recall that $\tX = X+ C_0$ with
  $C_0 \eind \delta_{(0,\xi)} + I \delta_{(h_0,\xi)}$. Therefore, for any
  $u>H$, there is no point of $C_0$ in the annulus
  $C_{\n,u}=\ball_{b_\n c}\setminus \ball_{ u} $ and thus
  \begin{displaymath}
    \pr(M(\tX_{C_{\n,u}}) > a_\n \delta \mid \xi>a_\n \epsilon )
    = \pr( M(X_{C_{\n,u}})> a_\n \delta )
    \leq \mathrm{const} \cdot b_\n^d \pr(\zeta >
    a_\n \delta) \to 0
  \end{displaymath}
  as $\tau \toi$. Similarly, because any cluster fits in a ball of
  radius $H$, Assumption \ref{hypo:AI_of_extr_blocks} holds
  immediately, if we trim the block with $l_\n = H$.
\end{example}
}

\subsection{Alternative representations of $\vartheta$ and $Q$}
\label{sub:alternative_repr}

Assume that $Y$ is the tail configuration of a stationary marked point
process $X$ on $\E$, with tail index $\alpha>0$ and a scaling function
$r$ of scaling index $\beta$. In this subsection we give some
Palm-like properties of the tail configuration under the assumption
that $Y\in \sN_{01}$ a.s., which e.g.\ holds under the anticlustering
condition (\ref{eq:AC}), see \Cref{prop:conv_to_tail_in_N_star}.  All
of the following results are based on the exceedance-stationarity
property (\ref{eq:exc_stat}) of the tail configuration.
   
For any $\mu\in \sN_{01}$, denote
\begin{align*}
  e(\mu) := \big\{t\in \R^d : (t,s)\in \mu \text{ and } s>1\big\} \, .
\end{align*}  
By definition, $0\in e(Y)$ a.s.  A function $A:\sN_{01}\to \R^d$ will
be called an \emph{anchoring function} if, for all $\mu \in \sN_{01}$
such that $e(\mu)\neq \emptyset$,
\begin{itemize}
\item[(i)] $A(\mu)\in  e(\mu)$;
\item[(ii)] $A(\shift{z}\mu)=A(\mu)-z$ for all $z\in \R^d$
  (shift-equivariance).
\end{itemize}   
A typical example of an anchoring function is the first maximum
anchor $A^{\mathrm{fm}}$ from (\ref{eq:first_maximum_anchor}). Another
one is the first exceedance anchor
\begin{align}\label{eq:first_exc_anchor}
  A^{\mathrm{fe}}(\mu):=\min e(\mu)\, ,
\end{align}
where the minimum is taken with respect to the lexicographic order.

\begin{lemma}\label{lem:theta_positive}
  If $Y\in \sN_{01}$ a.s., then
  \begin{displaymath}
   \vartheta_A=\pr(A(Y)=0)>0
  \end{displaymath}
  for any anchoring function $A$.
\end{lemma}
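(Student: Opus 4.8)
The plan is to exploit the exceedance-stationarity property \eqref{eq:exc_stat} with a cleverly chosen test function $h$ that counts, for each exceedance point $t$ of $Y$, whether the anchor of the shifted configuration $\shift{t}Y$ lands back at the origin. First I would note that, since $A$ is an anchoring function and $0\in e(Y)$ a.s.\ (because $Y(\{0\}\times(1,\infty))=1$), the quantity $A(Y)$ is well defined almost surely. For a fixed bounded set $\ball_c$, define
\begin{align*}
  h(t,\mu) := \one{t\in \ball_c}\,\one{A(\mu)=t}\cdot g(\mu),
\end{align*}
for a suitable nonnegative weight $g$ to be chosen; I will in fact take $g\equiv 1$ first and control the ball size afterwards. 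Applying \eqref{eq:exc_stat} to this $h$, the left-hand side becomes
\begin{align*}
  \ex\Big[\tsum_{(t,s)\in Y}\one{t\in \ball_c}\,\one{A(Y)=t}\,\one{s>1}\Big]
  = \ex\big[\one{A(Y)\in \ball_c}\big]
  = \pr(A(Y)\in \ball_c),
\end{align*}
using property (i) of the anchoring function (so $A(Y)$ is automatically one of the exceedance points, with score $>1$) and the fact that exactly one term in the sum survives. Letting $c\toi$ gives the left-hand side the value $1$. The right-hand side becomes
\begin{align*}
  \ex\Big[\tsum_{(t,s)\in Y}\one{-t\in \ball_c}\,\one{A(\shift{t}Y)=-t}\,\one{s>1}\Big],
\end{align*}
and by the shift-equivariance property (ii), $A(\shift{t}Y)=A(Y)-t$, so $A(\shift{t}Y)=-t$ is equivalent to $A(Y)=0$. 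Hence the right-hand side equals
\begin{align*}
  \ex\Big[\one{A(Y)=0}\tsum_{(t,s)\in Y}\one{-t\in \ball_c}\,\one{s>1}\Big],
\end{align*}
and letting $c\toi$ this is $\ex\big[\one{A(Y)=0}\,|e(Y)|\big]$, where $|e(Y)|$ denotes the (a.s.\ finite, since $Y\in\sN_{01}$) number of exceedance points of $Y$.

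Combining the two sides yields the key identity
\begin{align*}
  1 = \ex\big[|e(Y)|\,\one{A(Y)=0}\big].
\end{align*}
Since $|e(Y)|\geq 1$ whenever $e(Y)\neq\emptyset$, which holds a.s., the right-hand side is at most $\ex[|e(Y)|\,\one{A(Y)=0}]$; but it is also bounded below by nothing useful directly — instead I observe that if $\pr(A(Y)=0)=0$ then the right-hand side would be $0$, contradicting that it equals $1$. Therefore $\vartheta_A=\pr(A(Y)=0)>0$, as claimed. (In fact the identity shows $\pr(A(Y)=0)\geq 1/\ex[|e(Y)|]>0$ if $|e(Y)|$ has finite mean, but even without finiteness of that mean the contrapositive argument suffices.)

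The main technical obstacle is the passage to the limit $c\toi$ in both displays and the justification that exactly one term survives in each sum — i.e.\ that $A(Y)$ is a.s.\ a genuine atom of $Y$ lying in $e(Y)$, and that one may apply \eqref{eq:exc_stat} with an $h$ that is an indicator rather than a bounded continuous function. The first point is immediate from property (i) of anchoring functions. For the second, one approximates the indicator $\one{A(\mu)=t}$ by bounded continuous functions and uses monotone convergence; the truncation by $\ball_c$ keeps everything finite at each stage, and $Y\in\sN_{01}$ a.s.\ guarantees $|e(Y)|<\infty$, so monotone convergence in $c$ is legitimate. A minor point is measurability of $A$ and of $\mu\mapsto\one{A(\mu)=t}$, which holds for the standard anchors $A^{\mathrm{fm}}$, $A^{\mathrm{fe}}$ and is part of the standing definition of an anchoring function here.
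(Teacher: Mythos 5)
Your proof is correct and follows essentially the same route as the paper: apply the exceedance-stationarity identity \eqref{eq:exc_stat} to $h(t,\mu)=\one{A(\mu)=t}$, use shift-equivariance to convert $A(\shift{t}Y)=-t$ into $A(Y)=0$, and conclude from $1=\ex\bigl[\one{A(Y)=0}\,|e(Y)|\bigr]$ with $|e(Y)|<\infty$ a.s. The only difference is that your truncation by $\ball_c$ and the ensuing monotone-convergence passage are superfluous, since \eqref{eq:exc_stat} is already stated for arbitrary measurable nonnegative $h$, so one may plug in the untruncated indicator directly as the paper does.
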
 
\begin{proof}
  We adapt the proof of \cite[Lemma~3.4]{basrak:planinic:2020}.
  Using (\ref{eq:exc_stat}) and the shift-equivariance property of $A$,
  \begin{align*}
    1&= \ex\left[\tsum_{(t,s)\in Y}\one{A(Y)=t, s>1}\right]
       = \ex\left[\tsum_{(t,s)\in Y} \one{A(\shift{t}Y)=-t, s>1}\right] \\
     &= \ex\left[\tsum_{(t,s)\in Y} \one{A(Y)=0, s>1}\right]
       = \ex\left[\one{A(Y)=0}\tsum_{(t,s)\in Y} \one{s>1}\right] \, .
  \end{align*}
  Note that $\pr(Y\in \sN_{01})=1$ implies
  $\tsum_{(t,s)\in Y} \one{s>1}<\infty$ almost surely. Thus,
  $\pr(A(Y)=0)>0$, since otherwise the last expression above would
  vanish.
\end{proof}

\begin{proposition}\label{prop:anchors_are_equivalent}
  If $\pr(Y\in \sN_{01})=1$, then $\vartheta_A$ and the distribution
  $\pr\big([Y] \in \cdot \mid A(Y)=0\big)$ on $\tsN_{01}$
  do not depend on the choice of the anchoring function $A$.
\end{proposition}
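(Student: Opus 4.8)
The plan is to express both quantities $\vartheta_A$ and $\pr([Y]\in\cdot\mid A(Y)=0)$ in a form that manifestly does not reference $A$, by using the exceedance-stationarity property \eqref{eq:exc_stat} to move the anchor around the cluster. The natural candidate is a size-biased average over the class $[Y]$: for a bounded measurable shift-invariant test function $g:\tsN_{01}\to[0,\infty)$, consider the quantity
\begin{displaymath}
  \Phi(g):=\ex\left[\frac{g([Y])}{\tsum_{(t,s)\in Y}\one{s>1}}\right],
\end{displaymath}
which is well defined since $\pr(Y\in\sN_{01})=1$ forces the denominator to be a finite positive integer a.s.\ (it is at least $1$ because $0\in e(Y)$). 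I claim $\vartheta_A \,\ex\big[g([Y])\mid A(Y)=0\big]=\Phi(g)$ for every anchoring function $A$; since the right-hand side does not involve $A$, taking $g\equiv 1$ gives the $A$-independence of $\vartheta_A$, and then dividing shows the conditional law of $[Y]$ given $A(Y)=0$ is $A$-independent as well.

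First I would establish this identity. Fix an anchoring function $A$. Since $g$ is shift-invariant, $g([Y])=g([\shift{t}Y])$ for every $t$, so applying \eqref{eq:exc_stat} with $h(t,\mu):=g([\mu])\,\one{A(\mu)=t}/\big(\tsum_{(t',s')\in\mu}\one{s'>1}\big)$ (a nonnegative measurable function, with the convention $0/0=0$, though the denominator never vanishes on $\sA_1$) yields
\begin{align*}
  \ex\left[\tsum_{(t,s)\in Y}\frac{g([Y])\,\one{A(Y)=t}}{\tsum_{(t',s')\in Y}\one{s'>1}}\one{s>1}\right]
  =\ex\left[\tsum_{(t,s)\in Y}\frac{g([\shift{t}Y])\,\one{A(\shift{t}Y)=-t}}{\tsum_{(t',s')\in \shift{t}Y}\one{s'>1}}\one{s>1}\right].
\end{align*}
On the left, $A(Y)\in e(Y)$ picks out exactly one atom $(t,s)$ with $s>1$, so the inner sum collapses and the numerator of the fraction becomes $g([Y])$ while the denominator is unchanged; but the outer sum over $Y$ contributes exactly one nonzero term (the one with $t=A(Y)$), so after cancelling the surviving $\tsum\one{s>1}$ against the same factor — wait, more carefully: the left side equals $\ex\big[g([Y])\,\one{A(Y)\in e(Y)}\big]/1$ only after noting the single surviving term has value $g([Y])/\tsum_{(t',s')\in Y}\one{s'>1}$ and there are $\tsum_{(t,s)\in Y}\one{s>1}$ many atoms being summed over but only one satisfies $\one{A(Y)=t}$; hence the left side is $\ex\big[g([Y])/\tsum_{(t',s')\in Y}\one{s'>1}\big]=\Phi(g)$. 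On the right, using shift-equivariance $A(\shift{t}Y)=A(Y)-t$, the indicator $\one{A(\shift{t}Y)=-t}$ becomes $\one{A(Y)=0}$, independent of the summation variable; the denominator $\tsum_{(t',s')\in\shift{t}Y}\one{s'>1}$ equals $\tsum_{(t',s')\in Y}\one{s'>1}$ by shift-invariance of the count; and $g([\shift{t}Y])=g([Y])$; so the right side is $\ex\big[\one{A(Y)=0}\,g([Y])\big]$. Combining, $\Phi(g)=\ex\big[\one{A(Y)=0}\,g([Y])\big]=\vartheta_A\,\ex\big[g([Y])\mid A(Y)=0\big]$, which is the claimed identity.

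Finally I would wrap up: taking $g\equiv 1$ gives $\vartheta_A=\Phi(1)$, independent of $A$ (and positive by \Cref{lem:theta_positive}, though that is already known); then for general bounded shift-invariant $g$ — equivalently, for $g$ of the form $\one{[\mu]\in B}$ with $B$ a Borel subset of $\tsN_{01}$ — dividing by $\vartheta_A>0$ shows $\pr([Y]\in B\mid A(Y)=0)=\Phi(\one{\cdot\in B})/\Phi(1)$, a quantity with no reference to $A$; since such sets generate the Borel $\sigma$-algebra of the separable metric space $(\tsN_{01},\tilde\metric)$, the conditional distribution is $A$-independent. The main obstacle is purely bookkeeping: justifying the "collapse" of the double sum on the left-hand side of \eqref{eq:exc_stat}, i.e.\ checking that inserting the reciprocal-count factor is legitimate (it is, since on $\sN_{01}$ the count is a.s.\ a finite integer $\ge 1$, so the integrand is bounded by $\sup g<\infty$ and the identity \eqref{eq:exc_stat} applies verbatim to this particular $h$) and tracking shift-invariance of each factor carefully; no real analytic difficulty arises once $Y\in\sN_{01}$ a.s.\ is assumed.
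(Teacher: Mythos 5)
Your proof is correct. The paper does not spell out its own argument — it simply defers to \cite[Lemma~3.5]{basrak:planinic:2020} and says the proof ``can be proved in the same manner by using \eqref{eq:exc_stat}'' — so your self-contained reconstruction is a genuine contribution. The strategy is exactly the right one and is the same in spirit as the cited source and as the paper's later alternative representations \eqref{eq:normalized_typical_cluster_lo}--\eqref{eq:extremal_index_2}: replace the anchor-dependent quantity by a manifestly anchor-free size-biased average over the cluster. Your choice of normalizer, the exceedance count $\tsum_{(t,s)\in Y}\one{s>1}$, is the cleanest possible one for this particular proposition; the paper's displayed formulas use $\tsum_{(t,s)\in\Theta}s^\alpha$ instead, which requires passing through the polar decomposition first but has the advantage of also yielding the extremal-index identity \eqref{eq:extremal_index_2}. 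The key computation — plugging $h(t,\mu)=g([\mu])\one{A(\mu)=t}/\tsum_{(t',s')\in\mu}\one{s'>1}$ into \eqref{eq:exc_stat}, collapsing the left side to $\Phi(g)$ via uniqueness of the anchor, and factoring the constant indicator $\one{A(Y)=0}$ out of the right side so the count cancels — is carried out correctly, and your justifications (the count is a.s.\ a positive finite integer since $0\in e(Y)$ and $Y\in\sN_{01}$; shift-invariance of the count, of $g\circ[\,\cdot\,]$, and of the score set; $\vartheta_A>0$ from Lemma~\ref{lem:theta_positive}) are all the right ones. The slightly rambling ``wait, more carefully'' aside about the collapse of the left-hand sum would need to be tidied in a final write-up, and you should make explicit that $A$ and the reciprocal count are extended arbitrarily off $\sN_{01}$ so that $h$ is defined on all of $\R^d\times\sN$ as \eqref{eq:exc_stat} requires — but that is cosmetic, not a gap.
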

\begin{proof}
  This result parallels \cite[Lemma~3.5]{basrak:planinic:2020} and it
  can proved in the same manner by using \eqref{eq:exc_stat} instead
  of \cite[Property~(3.8)]{basrak:planinic:2020}.  We omit the
  details.
\end{proof}

The above results yield alternative representations for $\vartheta$ in
(\ref{eq:extremal_index}). Indeed, recall the spectral tail
configuration $\Theta=\Tb[\eta] Y$ (where $\eta=Y(0)$) and observe
that $A^{\mathrm{fm}}(\Theta)=0$ if and only if
$A^{\mathrm{fm}}(Y)=0$. Thus
$\vartheta=\vartheta_{A^{\mathrm{fm}}}
=\pr(A^{\mathrm{fm}}(\Theta)=0)$ which is further equal to
$\vartheta_A=\pr(A(Y)=0)$ for an arbitrary anchoring function
$A$. Since $Y = T_{\eta^{-\beta}, \eta^{-1}} \Theta$ with $\Theta$
independent of the $\mathrm{Pareto}(\alpha)$ random variable $\eta$,
\begin{align}\label{eq:spectral_decomp_of_Z}
  \pr\big(Y \in \cdot \mid A^{\mathrm{fm}}(Y)=0\big)
  = \int_{1}^{\infty} \pr\big(T_{u^{-\beta}, u^{-1}}
  Q \in \cdot \, \big) \alpha u^{-\alpha-1} \dx u
\end{align}
on $\sN_{01}$, where $Q$ has distribution
(\ref{eq:normalized_typical_cluster_cond}) on $\sN_{01}$. Using this
fact, one can, as in \cite[Proposition~3.9]{planinic:2021}, prove that
$Q$ from (\ref{eq:normalized_typical_cluster_cond}) satisfies
\begin{align}\label{eq:normalized_typical_cluster_lo}
  \pr ([\bQ] \in \cdot \,) = \vartheta^{-1}
  \ex \bigg[\ind{\Tb[{M(\Theta)}] [\Theta] \in \cdot \,}
  \frac{M(\Theta)^{\alpha}}{\tsum_{(t,s)\in\Theta}s^\alpha} \bigg] 
\end{align}
on $\tsN_{01}$. In particular,
\begin{align}\label{eq:extremal_index_2}
  \vartheta=\ex\bigg[\frac{M(\Theta)^{\alpha}}
  {\tsum_{(t,s)\in\Theta}s^\alpha}\bigg] \, ,
\end{align}
and $(\bQ_i)_{i\geq 1}$ in (\ref{eq:pp_convergence}) can be chosen such
that their common distribution on $\sN_{01}$ satisfies
\begin{align}\label{eq:normalized_typical_cluster}
  \pr (\bQ \in \cdot \,) = \vartheta^{-1}
  \ex \bigg[\ind{\Tb[{M(\Theta)}] \Theta \in \cdot \,}
  \frac{M(\Theta)^{\alpha}}{\tsum_{(t,s)\in\Theta}s^\alpha} \bigg] .
\end{align}

\section{Examples: tail configurations, extremal indices and typical
  clusters}
\label{sec:examples-1}

\subsection{Small distance to the $k$th nearest neighbor}
\label{sub:small_dist_kth_NN}

\rev{Below we consider the situation when each point of the stationary
  Poisson process is equipped with the score, being the reciprocal of
  the distance to its nearest neighbour. Then large scores identify
  points with small distances to nearest neighbours and the tail
  configuration describes the positions of points in a cluster of
  points which all are located near to each other. Note that large
  distances to the nearest neighbour recently studied in
  \cite{otto-chen23} indicate isolated points which do not form
  clusters.}

Fix a $k\in \N$. For a set $I\subset \R^d$ which has at most a finite
number of points in any bounded region and $t\in\R^d$, let
$\rho_k(t,I)$ denote the distance from $t$ to its $k$th nearest
neighbor in $I\setminus\{t\}$.  Note that $\rho_k(t,I)<a$ if and only
if $I\setminus\{t\}$ has at least $k$ points in the open ball of
radius $a$ centered at $t$.

Let $P$ be a homogeneous unit intensity Poisson process on $\R^d$, and
let $X$ be the marked point process obtained by attaching the score
$\rho_k(t,P)^{-1}$ to each of $t\in P$, so that the score is the
reciprocal to the distance from $t\in P$ to its $k$th nearest
neighbor. Thus, one can write $X = \Psi(P)$ with the scoring function
$\psi(s,\mu) = \rho_k(s,\mu)^{-1}$, see
Section~\ref{sec:gener-constr-scor}.  By Lemma~\ref{lemma:Psi}, the
Palm version $\tX$ is obtained by the same procedure applied to
$P+\delta_0$, so that the points in $\tX$ are located at all points
$t$ from $P+\delta_0$ and the score at $t$ is given by
$s=\rho_k(t,P+\delta_0)^{-1}$.  In particular, the score of the Palm
version at the origin is
\begin{align*}
  \xi=\rho_k(0,P+\delta_0)^{-1}. 
\end{align*}
\rev{Since the random variable $P(\ball_{1/u})$ has Poisson
  distribution with mean $C_d u^{-d}$, where $C_d$ is the volume of
  the unit ball in $\R^d$, and this mean goes to zero as $u\toi$}, it
is straightforward to see that
\begin{align*}
  \pr(\xi> u) = \pr(P(\ball_{1/u})\ge k)
  \sim \pr(P(\ball_{1/u})=k) = e^{-C_d u^{-d}}
  \frac{(C_d u^{-d})^{k}}{k!} 
  \quad \text{as }\; u\toi. 
\end{align*}
Thus, 
\begin{align}\label{eq:small_dist_kNN_marginal}
  \pr(\xi>u) \sim \frac{C_d^k}{k!} u^{-dk} \quad
  \text{as} \; u\toi \, ,
\end{align}
i.e., $\xi$ has a regularly varying tail with tail index $\alpha=dk$.

\subsubsection*{Tail configuration}

\begin{proposition}
  \label{prop:small_dist_kNN_tail_conf}
  \label{cor:small_dist_kNN_tail_conf_RV}
  For every $k\in \N$, the tail configuration of $X$ exists with
  the normalizing function $r(u)=u^{-1}$ (i.e., $\beta=-1$) and is given by
  \begin{displaymath}
    Y:= \Psi(\sY)=\{(U_i,\rho_k(U_i,\sY)^{-1}), i=0,\dots,k\},
  \end{displaymath}
  where $\sY:=\{U_0,\dots, U_k\}$, $U_0=0$, and $U_1,\dots,U_k$ are
  i.i.d.\ uniform on $\ball_1$. 
\end{proposition}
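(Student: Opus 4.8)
The plan is to compute the conditional law of $\Tu\tX$ given $\xi>u$ directly, exploiting the explicit Poisson structure. Recall from Lemma~\ref{lemma:Psi} that $\tX=\Psi(P+\delta_0)$, so the positions of $\tX$ are exactly those of $P+\delta_0$ and the score at each $t$ is $\rho_k(t,P+\delta_0)^{-1}$. Since $r(u)=u^{-1}$, the scaling operator acts by $\Tu(t,s)=(ut,s/u)$, so $\Tu\tX$ has positions $uP\cup\{0\}$ and the score at $ut$ equals $u^{-1}\rho_k(t,P+\delta_0)^{-1}=(\|u t'-ut\|\text{-type distances})^{-1}$; more precisely, by scale-covariance of $\rho_k$, the score of $\Tu\tX$ at a point $v$ is $\rho_k(v,\,uP+\delta_0)^{-1}$, i.e.\ $\Tu\tX=\Psi(uP+\delta_0)$ where $uP$ denotes the Poisson process $P$ dilated by factor $u$ (which has intensity $u^{-d}$).

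First I would record that the conditioning event $\{\xi>u\}$ is, by the computation preceding \eqref{eq:small_dist_kNN_marginal}, the event $\{P(\ball_{1/u})\ge k\}$, i.e.\ $\{(uP)(\ball_1)\ge k\}$, which has probability $\sim \tfrac{C_d^k}{k!}u^{-dk}\to0$. On this event, as $u\to\infty$, the dilated process $uP$ has vanishing intensity $u^{-d}$ on every fixed bounded set except that we force at least $k$ points into $\ball_1$; conditionally on $(uP)(\ball_1)=k$, those $k$ points are i.i.d.\ uniform on $\ball_1$ (a standard property of Poisson processes), and the remaining points of $uP$ lie outside $\ball_1$ and form an independent Poisson process of intensity $u^{-d}$ there. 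The key point is that for the $\borel_{11}$-vague topology we only need to track points of $\Tu\tX$ with score exceeding any fixed $\eps>0$ inside any fixed bounded window $\ball_c$: a point $v\in uP\cup\{0\}$ has score $>\eps$ iff its $k$th nearest neighbour in $uP+\delta_0$ is within distance $1/\eps$, which (since all such neighbours must come from the $O(1)$ points we have placed, the outside points being at distance $\gtrsim$ the gaps of a very sparse Poisson process, hence $\to\infty$ in probability) forces $v$ and its relevant neighbours to lie in a bounded region and, with probability tending to one, to be among $\{0,U_1,\dots,U_k\}$ with the $U_i$ i.i.d.\ uniform on $\ball_1$.

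Concretely, the steps are: (1) write $\pr(\Tu\tX\in\cdot\mid\xi>u)$ and split according to the value of $(uP)(\ball_1)$, showing the contribution of $\{(uP)(\ball_1)\ge k+1\}$ is $O(u^{-d})$ relative to the conditioning probability, hence negligible; (2) on $\{(uP)(\ball_1)=k\}$, the $k$ points are i.i.d.\ uniform on $\ball_1$, independent of the exterior Poisson process $uP\cap\ball_1^c$ of intensity $u^{-d}$; (3) show that for any fixed $c,\eps>0$, with probability $\to1$ no point of $uP\cap\ball_1^c$ contributes a score $>\eps$ to $\Tu\tX$ within $\ball_c$, and moreover removing those exterior points does not change the $k$th-nearest-neighbour distances of $0,U_1,\dots,U_k$ — this is because, for those $k+1$ points, all have $k$ neighbours already among $\{0,U_1,\dots,U_k\}$ at bounded distance, while the nearest exterior point is at distance $\to\infty$ in probability; (4) conclude that $\Tu\tX$ restricted to $\ball_c\times(\eps,\infty)$ converges in distribution to $\Psi(\sY)$ restricted to the same set, where $\sY=\{0,U_1,\dots,U_k\}$; (5) check that the limit $Y=\Psi(\sY)$ lies in $\sN$, is simple, satisfies $Y(\{0\}\times(1,\infty))=1$ a.s.\ (equivalently $\rho_k(0,\sY)<1$ a.s., which is immediate since $\sY$ has $k$ points in $\ball_1\setminus\{0\}$), and that $\borel_{11}$-vague convergence follows from convergence on all such windows via the standard approximation by continuous bounded functions of bounded support.

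The main obstacle is step~(3): making rigorous the claim that the exterior points of the dilated Poisson process are asymptotically irrelevant, both for the topology (their scores in a fixed window are small) and — more subtly — for the values of the scores of the interior points (the $k$th-nearest-neighbour distances of $0,U_1,\dots,U_k$ computed in $uP+\delta_0$ coincide, with probability $\to1$, with those computed in $\sY$ alone). For the first part one bounds $\pr(\exists\,v\in \ball_c\cap(uP\cup\{0\})\colon \rho_k(v,uP+\delta_0)<1/\eps)$ by noting such $v$ would need $k$ points of $uP+\delta_0$ within $\ball_{1/\eps}(v)\subset\ball_{c+1/\eps}$, and $(uP)(\ball_{c+1/\eps})$ is asymptotically Poisson with mean $u^{-d}C_d(c+1/\eps)^d\to0$; together with the at-most-$k$ forced points and the origin this cannot produce a new exceedance with probability $\to1$. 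For the second part, conditionally on the $k$ interior points, the probability that the exterior Poisson process has a point within distance $R$ of any of $0,U_1,\dots,U_k$ is $\le (k+1)u^{-d}C_d R^d\to0$ for each fixed $R$, so the $k$th-nearest-neighbour distance of each of these $k+1$ points is eventually realised within $\sY$; since these distances are a.s.\ continuous functions of the configuration (the relevant point process is a.s.\ in general position, so there are a.s.\ no ties at the critical radius), the continuous mapping theorem upgrades the convergence of the underlying point set $uP+\delta_0\to\sY$ (in $\ball_c$, after the exterior points are shown negligible) to convergence of $\Tu\tX\to\Psi(\sY)$. A clean way to package all of this is to write, for test functions $f$ with support in $\ball_c\times(\eps,\infty)$,
\begin{displaymath}
  \ex\big[e^{-\Tu\tX(f)}\,\big|\,\xi>u\big]
  = \ex\big[e^{-\Psi(\sY_u)(f)}\big] + o(1),
\end{displaymath}
where $\sY_u$ is $\{0\}$ together with $(uP)(\ball_1)$-many points (conditioned to be $\ge k$) plus the exterior Poisson points, then successively discard the $\{(uP)(\ball_1)>k\}$ event and the exterior points, arriving at $\ex[e^{-\Psi(\sY)(f)}]=L_f(\Psi(\sY))$, which by \cite[Theorem~4.11]{kallenberg:2017} gives the asserted $\borel_{11}$-vague convergence.
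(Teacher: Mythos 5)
Your proposal is correct and follows essentially the same route as the paper's proof: both pass to $\Psi(uP+\delta_0)$ via the scale-covariance of $\rho_k$, reduce the conditioning $\{P(B_{1/u})\ge k\}$ to exactly $k$ points (with $\ge k+1$ negligible), identify those $k$ points as i.i.d.\ uniform on $B_1$ independent of the exterior process of intensity $u^{-d}$, and then show the exterior points neither contribute scores above $\eps$ in a fixed window nor perturb the $k$th nearest-neighbour distances of $\{0,U_1,\dots,U_k\}$, concluding via Laplace functionals. Your step (3) in fact spells out the ``exterior points don't change interior scores'' argument a bit more carefully than the paper, which handles it with the terse remark that on $\{Z^{(u)}(B_a)=0\}$ with $a$ large enough the $k$th nearest neighbour of each $U_i$ must lie in $\sY$.
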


\begin{remark}
  In this case the tail configuration $Y$ is an element of
  $\sN_{10}\subset \sN$, and as the proof below shows, the convergence
  in \eqref{eq:tail_process} is valid even in the stronger
  $\borel_{10}$-vague topology.
\end{remark}

\begin{proof}[Proof of \Cref{cor:small_dist_kNN_tail_conf_RV}]
  Recall that $X = \Psi(P)$ for the scoring function
  $\psi(s,\mu) = \rho_k(s,\mu)^{-1}$, and let $\tP= \delta_0 + P$ be a
  Palm version of $P$.  Observe that the conditional distribution of
  $\tX$ given $\xi >u$ coincides with the conditional distribution of
  $\Psi(\tP)$ given $P(B_{1/u}) \geq k$.  \rev{In view of this and
    taking into account that
    $\pr(P(\ball_{1/u})\ge k) \sim \pr(P(\ball_{1/u})=k)$, it is easy
  to see that for any nonnegative $f$ the Laplace functional of $\tX$
  conditional on $\{\xi >u\}$} satisfies
  \begin{displaymath}
    L_f \big( \Tu \tX \,  \big| \, \xi>u  \big) =
    L_f \big(\Tu \Psi(\tP) \,  \big| \, P (B_{1/u}) \geq k  \big) \sim 
    L_f \big(\Tu \Psi(\tP) \,  \big| \, P (B_{1/u}) = k  \big)
  \end{displaymath}
  as $u \toi$.

  Furthermore, $\tP$ conditionally on $P(B_{1/u}) = k $ has
  the distribution of
  $$
  P^{(u)} := \delta_0 + \tsum_{i=1}^k  \delta_{{U_i}/{u}}
  + P_{B_{1/u}^c} \,,
  $$
  where $U_1,\dots,U_k$ on the right-hand side are uniformly
  distributed in $B_1$ and independent of $P$,
  and $P_{B_{1/u}^c}$ is the restriction of $P$ to the set
  $B_{1/u}^c$.  Since $r(u) = 1/u$ and
  $u\rho_k(t, A) = \rho_k(ut, uA)$,
  \begin{equation*} 
    \Tu[u] \Psi(P^{(u)}) = \Psi ( uP^{(u)}) \, ,
  \end{equation*}
  where 
  \begin{align*}
    uP^{(u)} = \tsum_{i=0}^k\delta_{U_i} + Z^{(u)} ,
  \end{align*}
  and $Z^{(u)}$ is $uP$ restricted to $B_1^c$.  Thus,
  for every nonnegative $f$,
  \begin{displaymath}
    L_f \big( \Tu \tX \,  \big| \, \xi>u  \big) \sim 
    L_f \big( \Psi ( \sY + Z^{(u)})  \big)\quad \text{as}\;  u\toi. 
  \end{displaymath}
  
  Take now $f:\E\to \R_+$ 
  whose support is contained in the set
  $B_a\times (0,\infty)\in \borel_{10}$ for some $a>0$. We can and
  will assume that $a\geq 2$. Observe that
  $$
  \pr(Z^{(u)} (B_a)\geq 1) \leq \ex [Z^{(u)} (B_a)]  = 
  \ex  [P (B_{a/u} \setminus B_{1/u})]  \leq 
  \frac{1}{u^d} C_d  a^d  \to 0 \, ,
  $$
  so that $\pr(Z^{(u)} (B_a) = 0)\to 1$ as $u \toi$. Since
  $f(t,s)=0$ for $t\notin B_a$, on the event
  $\{Z^{(u)} (B_a) = 0\}$ one has that
  \begin{align*}
    \int f \dint \Psi ( \sY + Z^{(u)})
    = \tsum_{i=0}^k f\big(U_i, \rho_k^{-1}(U_i,\sY + Z^{(u)})\big)
    = \tsum_{i=0}^k f\big(U_i, \rho_k^{-1}(U_i,\sY)\big) = \int f \dint \Psi ( \sY ),
  \end{align*}
  where the second equality follows since $a\geq 2$, and so the $k$th
  nearest neighbor of each $U_i$ is necessarily in $\sY$. Thus, for
  any such $f$,
  $$
  L_f \big( \Tu \tX \,  \big| \, \xi>u  \big) \sim 
  L_f \big( \Psi ( \sY + Z^{(u)})  \big)
  \to  L_f \big( \Psi ( \sY)  \big) = L_f(Y) \quad \text{as}\; u\toi.
  $$
  In particular, this holds for any nonnegative,
  continuous and bounded function $f$ on $\E$ whose support is in
  $\borel_{11}$ since $\borel_{11} \subset \borel_{10}$, and thus
  \eqref{eq:tail_process} holds.
\end{proof}

By \Cref{prop:S0t_is_Pareto}, the tail score at the origin $\eta$ is
$\mathrm{Pareto}(dk)$-distributed which is easily checked since
\begin{displaymath}
  \eta^{-1}=\rho_k(0,\sY)=\tmax_{1\leq i \leq k} \|U_i\|=:U^*. 
\end{displaymath}
By \Cref{prop:spectral_is_indep}, $\eta$ is independent of
the spectral tail configuration, which is (since $\beta=-1$) given by
\begin{align*}
  \Theta  :=T_{\eta^{-1}, \eta} Y
  &= \Big\{\big(U_i / U^*, \, \rho_k(U_i/U^*,\sY /U^*)^{-1}\big),i=0,\dots,k\Big\} \\
  &=\rev{\Psi(\{U_0/U^{*},\dots, U_k/U^{*}\})} \, .
\end{align*}
\rev{A direct calculation shows that the random set $\{U_0/U^{*},\dots, U_k/U^{*}\}$ has the same distribution as $\sY^*:=\{U_0,U_1,\dots, U_{k-1}, U'_k\}$, where $U'_k$ is uniformly
distributed on $\partial B_1$ and independent of $U_0, U_1,\dots, U_{k-1}$. In particular, }
\begin{equation}\label{eq:Theta_k_dist}
  \Theta \eind \Psi(\sY^*) =  \tsum_{i=0}^{k-1}
  \delta_{(U_i, \, \rho_k(U_i,\sY^*)^{-1})}
  + \delta_{(U_k', \,  \rho_k(U_k',\sY^*)^{-1})} \, .
\end{equation}


\subsubsection*{Point process convergence} 
\label{sub:small_dist_pp_conv}

First, (\ref{eq:small_dist_kNN_marginal}) implies that the sequence of
thresholds $(a_{\n})$ in (\ref{eq:a_n}) can be chosen as
\begin{align}\label{eq:a_n_small_kNN}
  a_{\n}:= \n^{1/k} \cdot \sqrt{\pi}(k!)^{-1/dk} \Gamma(d/2+1)^{-\frac{1}{d}}, \quad \n>0 \, .
\end{align}
Scaling scores with $a_{\n}^{-1}$ is (up to a transformation) equivalent to
scaling distances $\rho_k(t,P)$ with $a_{\n}$.

Let $A^{\mathrm{fm}}$ be defined at
\eqref{eq:first_maximum_anchor}. Then the extremal index (depending on
$k$ and dimension) is given by
\begin{align*}
  \vartheta_{k,d}:= \pr(A^{\mathrm{fm}}(\Theta)=0) \, , 
\end{align*}
and (\ref{eq:normalized_typical_cluster_cond}) implies that $Q$ has
the conditional distribution of $\Theta$ given that
$A^{\mathrm{fm}}(\Theta)=0$. 
Observe that $A^{\mathrm{fm}}(\Theta)=0$
if and only if $\sY^*$ is not contained in $\ball_1(U_i)$ for all
$i=1,\dots, k-1$, and $U_k'$ is lexicographically larger than $0$ if
$\sY^*$ is a subset of the closure of $\ball_1(U_k')$.

If $k=1$, then $\sY^*=\{0,U_1'\}$, so that $A^{\mathrm{fm}}(\Theta)=0$
if and only if $U_1'$ is lexicographically larger than
$0$. Thus, $\vartheta_{1,d}=1/2$  and
\begin{align*}
  Q \eind \Psi(\{0,U_1''\}) =  \delta_{(0, 1)} + \delta_{(U_1'', 1)}
\end{align*}
in all dimensions, where $U_1''$ is uniform on
$\partial B_1 \cap \{x=(x_1,\dots, x_d)\in \R^d : x_1\geq 0\}$. The
value $\vartheta_{1,d}=1/2$ is intuitively obvious since,
asymptotically, large values always come in pairs (with exactly the
same score). In dimension $d=2$, $\vartheta_{1,2}=1/2$ was obtained in
\cite[Section~4.2]{chen-rob18} when analyzing the extremal properties
of the inradius of a Poisson--Voronoi tessellation.

If $k=2$, $A^{\mathrm{fm}}(\Theta)=0$ if and only if
$U_1\notin \ball_1(U_2')$. Thus, in all dimensions,
\begin{align*}
  Q\eind \Psi(\{0,U_1',U_2'\}),
\end{align*}
where $U_1'$ is, conditionally on $U_2'$, uniform on
$\ball_1\setminus \ball_1(U_2')$. Furthermore, due to rotational invariance,
\begin{displaymath}
  \vartheta_{2,d}=1-\frac{\Leb(\ball_1\cap \ball_1(e_1))}{\Leb(\ball_1)}
  =1-\frac{2\Gamma(1+d/2)}{\sqrt{\pi}\Gamma((d+1)/2)}\int_0^{\pi/3}
  \sin^d u \;\dx u,
\end{displaymath}
where $e_1$ is the first basis vector of $\R^d$ and $\Leb(\cdot)$ is
the Lebesgue measure. In particular, $\vartheta_{2,1}=1/2$,
\begin{align*}
  \vartheta_{2,2} = \frac13 + \frac{\sqrt{3}}{2 \pi} \approx 0.609,
\end{align*}
and $\vartheta_{2,3}=33/48$.  Since
$\Gamma(x+\alpha)\sim\Gamma(x)x^\alpha$ as $x\toi$,\footnote{One can obtain the asymptotics for the intregral $I_d=\int_0^{\pi/3}
  \sin^d u \;\dx u$ by showing that 
  \begin{align*}
      \lim_{d\toi}\frac{d+1}{\sin^d(\pi/3)} I_d = \tan(\pi/3) \, .
  \end{align*}}
\begin{displaymath}
  1-\vartheta_{2,d}\sim \sqrt{\frac{2d}{\pi}} \cdot 2\frac{(\sqrt{3}/2)^{d+1}}{d+1}  \sim \; \sqrt{\frac{6}{\pi}} \cdot \frac{(\sqrt{3}/2)^{d}}{\sqrt{d}}  \, ,  \quad
  \text{as}\; d\toi \, ,
\end{displaymath}
i.e., $1-\vartheta_{2,d}$ goes to
zero exponentially fast as the dimension grows. 

For $d=1$, we have already seen that
$\vartheta_{1,1}=\vartheta_{2,1}=1/2$. Interestingly, one can check
that $\vartheta_{k,1}=1/2$ for all $k\in \N$. Indeed, one can assume
that $U'_k=1$ without loss of generality, and then the maximal score
is attained at zero if the unit ball around any of
$j\in\{0,\dots,k-2\}$ points that fall in $(0,1)$ does not cover $k-j$
points uniformly distributed in $(-1,0)$. This probability can be
calculated explicitly, and then the result follows by noticing that
$j$ is binomially distributed.

The exact calculations of $\vartheta_{k,d}$ become quite involved for 
$k\geq3$ and $d\geq 2$. 

\begin{proposition}
  \label{prop:small-dist-conv}
  For arbitrary $k, d\in \N$, define $X$ as in the beginning of
  \Cref{sub:small_dist_kth_NN} and $(a_{\n})_{\n}$ as in
  (\ref{eq:a_n_small_kNN}). For any 
  $(b_{\n})_{\n}$ such that $\n^{-1/k}/b_{\n}\to 0$ and
  $b_{\n}/\n \to 0$ as $\n \toi$, the assumptions of
  \Cref{thm:main_result} hold (with $\alpha=dk$ and $r(u)=u^{-1}$). Therefore, the
  convergence of the extremal blocks in (\ref{eq:pp_convergence})
  holds as well.
\end{proposition}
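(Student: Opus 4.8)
The plan is to verify Assumptions~\ref{hypo:t_n}, \ref{hypo:AC} and \ref{hypo:AI_of_extr_blocks} directly and then invoke \Cref{thm:main_result}. Throughout I use that $r(u)=u^{-1}$, $\beta=-1$, $\alpha=dk$, that $a_{\n}^{dk}\sim(C_d^k/k!)\,\n^d$ (hence $a_{\n}\asymp\n^{1/k}$) by \eqref{eq:small_dist_kNN_marginal} and \eqref{eq:a_n_eps}, and the model's \emph{localization} property: for locally finite $I\subset\R^d$, $t\in\R^d$ and $v>0$ one has $\rho_k(t,I)^{-1}>v$ iff $I\setminus\{t\}$ has at least $k$ points in $\ball_{1/v}(t)$, so that a point's score, once it exceeds $v$, is determined by the configuration inside the radius-$1/v$ ball about it. Assumption~\ref{hypo:t_n} is then immediate: $r(a_{\n})/b_{\n}=1/(a_{\n}b_{\n})\asymp\n^{-1/k}/b_{\n}\to0$ by the standing hypothesis on $b_{\n}$, and $b_{\n}/\n\to0$ by assumption.

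For Assumption~\ref{hypo:AC}, fix $\epsilon,\delta,c>0$ and write $r_{\n}:=1/(a_{\n}\delta)$ and $R_{\n}:=r(a_{\n}\epsilon)u=u/(a_{\n}\epsilon)$, so that $C_{\n,u}=\ball_{b_{\n}c}\setminus\ball_{R_{\n}}$; recall $\tX=\Psi(P+\delta_0)$ and $\{\xi>a_{\n}\epsilon\}=\{P(\ball_{1/(a_{\n}\epsilon)})\geq k\}$. I would restrict to $u>1+\epsilon/\delta$. Then $R_{\n}>r_{\n}$, so $0\notin\ball_{r_{\n}}(t)$ for every $t\in C_{\n,u}$ and the atom $\delta_0$ plays no role; by localization the event $\{M(\tX_{C_{\n,u}})>a_{\n}\delta\}$ is exactly the event that some $t\in P\cap C_{\n,u}$ has $P(\ball_{r_{\n}}(t))\geq k+1$, which is measurable with respect to $P$ on the annulus $\ball_{b_{\n}c+r_{\n}}\setminus\ball_{R_{\n}-r_{\n}}$. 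Since $R_{\n}-r_{\n}>1/(a_{\n}\epsilon)$ for $u>1+\epsilon/\delta$, this annulus is disjoint from $\ball_{1/(a_{\n}\epsilon)}$, so by independence of $P$ over disjoint sets the conditioning on $\{\xi>a_{\n}\epsilon\}$ drops out, and Mecke's formula gives
\[
\pr\big(M(\tX_{C_{\n,u}})>a_{\n}\delta\mid\xi>a_{\n}\epsilon\big)\leq\ex\big[\#\{t\in P\cap C_{\n,u}:P(\ball_{r_{\n}}(t))\geq k+1\}\big]=\Leb(C_{\n,u})\,\pr(\mathrm{Pois}(C_dr_{\n}^d)\geq k),
\]
which is at most $C_d(b_{\n}c)^d(C_dr_{\n}^d)^k/k!=O((b_{\n}/\n)^d)\to0$ as $\n\toi$ because $r_{\n}^{dk}\asymp\n^{-d}$. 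Hence $\limsup_{\n}\pr(M(\tX_{C_{\n,u}})>a_{\n}\delta\mid\xi>a_{\n}\epsilon)=0$ for every $u>1+\epsilon/\delta$, giving \eqref{eq:AC}.

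For Assumption~\ref{hypo:AI_of_extr_blocks} I would take $l_{\n}:=\sqrt{r(a_{\n})b_{\n}}=\sqrt{b_{\n}/a_{\n}}$, so $l_{\n}/b_{\n}=\sqrt{r(a_{\n})/b_{\n}}\to0$ while $l_{\n}a_{\n}=\sqrt{a_{\n}b_{\n}}\to\infty$. Let $(f_{\n,\bi})$ be a family as in the hypothesis with common $\delta>0$ and set $r_{\n}:=1/(a_{\n}\delta)$; then $l_{\n}/r_{\n}=\delta\sqrt{a_{\n}b_{\n}}\to\infty$, so $r_{\n}<l_{\n}$ for all large $\n$. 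Since $f_{\n,\bi}\in\sF$ sees $\Tu[a_{\n}]\widehat{\bB}_{\n,\bi}$ only through its scores exceeding $\delta$, i.e.\ through the points $t\in\hat{J}_{\n,\bi}$ with $\rho_k(t,P)<r_{\n}$ together with those scores, localization shows that $f_{\n,\bi}(\Tu[a_{\n}]\widehat{\bB}_{\n,\bi})$ is a function of $P$ on the $r_{\n}$-neighbourhood of $\hat{J}_{\n,\bi}$. Because $r_{\n}<l_{\n}$, these $r_{\n}$-neighbourhoods sit inside the essentially disjoint cubes $J_{\n,\bi}$ and are therefore pairwise disjoint, so the variables $f_{\n,\bi}(\Tu[a_{\n}]\widehat{\bB}_{\n,\bi})$, $\bi\in I_{\n}$, are mutually independent by the independence of the Poisson process on disjoint sets; consequently the difference in \eqref{eq:AI_blocks_cond} equals $0$ for all large $\n$, and in particular tends to $0$.

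With the three assumptions in hand, \Cref{thm:main_result} applies with $\alpha=dk$ and $r(u)=u^{-1}$ and yields \eqref{eq:pp_convergence}; the convergence of the relabelled extremal blocks is then the continuous-mapping consequence recorded after that theorem. I expect the only genuinely delicate point to be the anticlustering verification: one has to take $u$ large relative to $\epsilon/\delta$ so that the annulus $C_{\n,u}$, and the radius-$r_{\n}$ balls that govern whether its points are extremal, are separated from the ball about the origin carrying the conditioning event, which is what lets the Poisson conditioning be stripped away before the first-moment bound; the rest is routine bookkeeping with the localization property and the independence of Poisson restrictions to disjoint regions.
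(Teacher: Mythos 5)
Your proof is correct and follows essentially the same strategy as the paper: exploit the localization property of the $k$th‑nearest‑neighbor score (a large score at $t$ is governed by $P$ in a ball of radius $O(a_\n^{-1})$ around $t$) together with independence of the Poisson process over disjoint regions, giving Assumption~\ref{hypo:AC} via a first‑moment (Mecke/Campbell) bound and Assumption~\ref{hypo:AI_of_extr_blocks} via exact independence once $l_\n$ dominates the localization radius. The only cosmetic difference is that the paper passes to $\pr(M(X_{B_{b_\n c}})>a_\n\delta)$ and invokes the refined Campbell formula in terms of $\psi(0,\tP)$, whereas you bound $\Leb(C_{\n,u})\pr(\mathrm{Pois}(C_d r_\n^d)\ge k)$ directly via Mecke, and you write $l_\n=\sqrt{b_\n/a_\n}$ (correcting what appears to be a slip $\sqrt{b_\n/a_\n^{-1}}$ in the paper's text to the value actually needed); these are immaterial variations.
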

\begin{proof}
Assumption~\ref{hypo:t_n} holds, since $r(u)=u^{-1}$ and $a_\n$ has
the order $\n^{1/k}$. For each $\n>0$ and $\bi \in I_{\n}$ define the
block of indices $J_{\n,\bi}$ by (\ref{eq:blocks_of_indices}) and
$\bB_{\n,\bi}$ by \eqref{eq:extr-block}. The key step in checking
other assumptions of \Cref{thm:main_result} is that for every
$t\in P$, the condition $X(t)>y$ (i.e., $\rho_k(t,P)<y^{-1}$) is
equivalent to $P(\ball_{y^{-1}}(t))\geq k+1$.  Thus, given that
$X(t)>a_{\n} \epsilon$, $X(t)$ depends only on the points of $P$ in
$B_{a_{\n}^{-1} \epsilon^{-1}}(t)$, where $a_{\n}^{-1} \to 0$ as
$\n\toi$.

Fix $\eps,\delta,c>0$ and recall the notation from (\ref{eq:AC}). The
event $\{\xi>a_\n \eps\}= \{\rho_k(0,\tP)<(a_{\n} \eps)^{-1}\}$
depends only on $\tP$ (that is, $P$) restricted to
$\ball_{(a_{\n}\eps)^{-1}}$. Since
$C_{\n,u}\subset B_{(a_{\n}\eps)^{-1}u}^c$, as soon as
$u>1+ \delta^{-1}/\epsilon^{-1}$ the event
$\{M(\tX_{C_{\n,u}})>a_{\n} \delta\}=\{\min_{t\in \tP\cap C_{\n,u}}
\rho_k(t,\tP)<(a_{\n} \delta)^{-1}\}$ is determined only by $\tP$
restricted to $\ball_{(a_{\n}\eps)^{-1}}^c$ (on this set one has that
$P=\tP$ and, consequently, $X=\tX$). Since $P$ is a Poisson process,
this implies that for all such $u$,
\begin{align*}
  \pr\big(M(\tX_{C_{\n,u}})>a_\n\delta &\mid \xi>a_\n\eps\big)
  = \pr\big(M(\tX_{C_{\n,u}})>a_\n\delta)= \pr\big(M(X_{C_{\n,u}})>a_\n\delta)\\
  & \le \pr(M(X_{B_{b_{\n} c}})>a_{\n} \delta) \le \ex\left[\tsum_{t\in P'} \ind{\psi(t,P)>a_{\n}\delta, \, t\in B_{b_{\n}c}}\right] \\
  & = \ex\left[\tsum_{t\in P'} \ind{\psi(0,\shift{t}P)>a_{\n}\delta, \, t\in B_{b_{\n}c}}\right] \\
  & = \mathrm{const}\, (c b_{\n})^d \pr(\psi(0,\tP)> a_{\n}\delta) \\
  & =  \mathrm{const}\, \frac{b_{\n}^d}{\n^d} \n^d \pr(\xi> a_{\n}\delta) \to \mathrm{const} \cdot 0 \cdot \delta^{-\alpha}
  = 0  \quad \text{ as } \n\toi \, .
\end{align*}
In the sixth step we used the refined Campbell's theorem
(\ref{eq:refined_Campbell}), and in the penultimate step we used
(\ref{eq:a_n_eps}) and the fact that $b_{\n}$ is chosen such that
$b_{\n}/\n \to 0$. Thus, (\ref{eq:AC}) holds, i.e., \Cref{hypo:AC} is
satisfied.

Now consider Assumption~\ref{hypo:AI_of_extr_blocks}.  Take
$(l_{\n})_{\n}$ such that $l_{\n}/b_{\n}\to 0$ and
$a_{\n}^{-1}/l_{\n}\to 0$ as $\n \toi$; since
$a_{\n}^{-1}/b_{\n}=r(a_{\n})/b_{\n}\to 0$, one can, e.g., take
$l_{\n}=\sqrt{b_{\n}/a_{\n}^{-1}}$. Let $f_{\n, \bi}$, $\n>0$,
$\bi \in I_{\n}$, be an arbitrary family of shift-invariant measurable
functions from $\sN_{01}$ to $[0,\infty)$ such that for some
$\delta>0$ and for all $\n>0$, $\bi \in I_{\n}$ and $\mu\in \sN_{01}$,
\begin{displaymath}
  f_{\n, \bi}(\mu)=
  \begin{cases}
    0 & \text{if}\; M(\mu)\leq \delta,\\
    f_{\n, \bi}(\mu^{\delta}) & \text{otherwise} \, ,
  \end{cases}
\end{displaymath}
where $\mu^{\delta}$ denotes the restriction of $\mu$ to
$\R^d\times (\delta,\infty)$.

Similarly as above, for $t\in P$, the random variable
$X(t)\ind{X(t)>a_{\n} \delta}$ depends only on $P$ restricted to
$B_{(a_{\n}\delta)^{-1}}(t)$. Moreover, if
$(a_{\n}\delta)^{-1}<l_{\n}$, then
\begin{align*}
  \bigcup_{t\in \hat{J}_{\n,\bi}} B_{(a_{\n}\delta)^{-1}}(t) \subseteq J_{\n,\bi}
\end{align*}
for all $\bi\in I_{\n}$; recall that $\hat{J}_{\n,\bi}$ in
(\ref{eq:trimmed_blocks}) are obtained from the original blocks
$J_{\n,\bi}$ by trimming the edges by $l_{\n}$. Thus, since
$\widehat{\bB}_{\n,\bi}:= X_{\hat{J}_{\n,\bi}}$, the value of
$f_{\n,\bi}(\Tu[a_{\n}]\widehat{\bB}_{\n,\bi})$ depends only on $P$
restricted to $J_{\n,\bi}$, for all $\bi\in I_{\n}$. For such $\n$,
since $P$ is a Poisson process and $J_{\n,\bi}$'s are disjoint, the
left-hand side of (\ref{eq:AI_blocks_cond}) vanishes, hence,
\Cref{hypo:AI_of_extr_blocks} holds.
\end{proof}

A simple consequence of Proposition~\ref{prop:small-dist-conv}
concerns the behavior of the minimal distance to the $k$-th nearest
neighbour in a Poisson configuration of points on an increasing
hypercube $[0,\n]^d$.
For all $\n>0$, denote $m_{\rho,\n} = \min \{\rho_k(t,P) : t \in P\cap [0,\n]^d\}$. Then, for
$a_\n$ as in \eqref{eq:a_n_small_kNN}, (\ref{eq:easy_consq}) implies
$$
 \pr ( a_\n m_{\rho,\n} \ge v) \to e^{-\vartheta_{k,d}
   v^{dk}}\quad
 \text{as}\; n\to\infty
$$
for any $v>0$, that is, the scaled minimum distance to the $k$th
nearest neighbour in a Poisson configuration in $[0,\n]^d$ converges
to a (scaled) Weibull distribution.

\subsection{Moving maxima model} 
\label{sub:moving_maxima_tail}

\rev{Moving maxima models are very popular in the studies of time
  series. Below we put such models in the spatial context by starting
  with a marked Poisson point process and then adjusting the scores by
  giving each point the score derived from weighted maximum of the
  scores of points from its neighbours. This general construction
  was described in Section~\ref{sec:gener-constr-scor}.} 

Let $\zeta$ be a positive random variable whose tail is regularly
varying of index $\alpha>0$, and denote the distribution of $\zeta$ by
$m$. Assume that $P\in \sN_{10}$ is a unit rate independently marked
stationary Poisson process on $\R^d$ with marks in $(0,\infty)$, whose
intensity measure is the product of the Lebesgue measure on $\R^d$ and
a probability measure $m$, so that $\zeta$ is the typical mark. In the
sequel, write $P=\tsum \delta_{(t,\zeta_t)}$ where $(\zeta_t)_{t}$,
$t\in\R^d$, are i.i.d.\ with distribution $m$. \rev{While, by
  independence, the large values of $\zeta_t$ come in isolation in
  $P$, the clustering of scores is easily modeled by
  considering, for instance,
  $X=\tsum \delta_{(t,\max_{|t-s|<1} \zeta_s)}$, where the ``large''
  scores in $P$ propagate to all neighbouring locations in the ball of
  radius 1.}

Let $\sN_g$ be the space of all simple locally finite point measures
on $\R^d$ equipped with the usual $\sigma$-algebra. Consider a
measurable function $\Phi:\R^d\times \sN_g\to \sN_g$ such that, for
each $\mu'\in \sN_g$, and each $t\in \mu'$, $\Phi(t,\mu')$ is a
finite subset of $\mu'$ which contains $t$. It is useful to interpret
$\Phi(t,\mu')$ as a \emph{neighborhood} of $t$ in $\mu'$, and all
points $x\in \Phi(t,\mu')$ as \emph{neighbours} of $t$. Furthermore,
assume that $\Phi$ is shift-equivariant in the sense that
$\Phi(t-x, \shift{x}\mu')=\Phi(t,\mu')-x$ for all $\mu'\in \sN_g$,
$t\in \mu'$ and $x\in \R^d$. Finally, denote by $N(t,\mu')$ the
cardinality of $\Phi(t,\mu')$. It can been easily seen that the
arguments below also work if $\Phi$ depends on some external sources
of randomness, but stays independent of the marks $(\zeta_t)$.

Consider a (deterministic) weight function $w:\R^d\to [0,\infty)$ such
that $w(0)>0$, and define the scoring function
$\psi:\R^d \times \sN_{10}\to [0,\infty)$ by
\begin{align*}
  \psi(t, \mu):= {\tmax_{x\in \Phi(t,\mu')}} w(x-t) \mu(x),
  \quad   t\in\mu',
\end{align*}
where $\mu'$ \rev{is the projection of $\mu$ on $\R^d$}.
Without loss of generality,
in the sequel assume that $w(0)=1$. If the weight function is
identically one, the score at $t$ is the maximum of the scores of its
neighbours.

Since $\psi$ is shift-invariant, $X:=\Psi(P)$ is a stationary marked
point process on $\R^d$. Its Palm version is given by
$\tX:=\Psi(\tP)$, where $\tP=P+\delta_{(0,\zeta_0)}$, with $\zeta_0$
having distribution $m$ and being independent of $P$. For notational
convenience, denote $\tPhi(t)=\Phi(t,\tP')$, and let
$\tN(t)=N(t,\tP')$ be the cardinality of $\tPhi(t)$ for all
$t\in \tP'$. \rev{Clearly,
  $X=\tsum \delta_{(t,\max_{|t-s|<1} \zeta_s)}$ is a special example
  of this construction with $w\equiv 1$ and
  $\Phi(t,\mu') = \tsum_{s \in \mu'} \delta_s \one{|s-t|<1}$, i.e.,
  for the neighbourhoods which consist of all points $s \in \mu'$
  within distance less than $1$ to $t$.}

\subsubsection*{Tail configuration}

\rev{Recall that the random score $\zeta$ is assumed to be regularly
  varying with tail index $\alpha$.}

\begin{theorem}
  \label{thm:tail_conf_moving_maxima}
  Assume that $w$ is a bounded function and that
  $\ex [\tN(0)]<\infty$. Then the following statements hold.
  \begin{itemize}
  \item[(i)] The score at the origin $\xi$ satisfies
    \begin{align}\label{eq:mov_max_xi_is_RV}
      \lim_{u\toi}\frac{\pr(\xi>u)}{\pr(\zeta>u)}
      \to  \ex\left[\tsum_{t\in \tPhi(0)} w(t)^\alpha \right]
      =:\kappa\in (0,\infty) \, .
    \end{align}
    In particular, $\xi$ is regularly varying with tail index
    $\alpha$.
  \item[(ii)] The tail configuration of $X$ exists for a constant
    scaling function $r\equiv 1$ (thus, with no scaling of the
    positions) and the distribution of its spectral part
    $\Theta\in \sN$ is given by
    \begin{align}\label{eq:Theta_moving_maxima}
      \ex[h(\Theta)]=\kappa^{-1} \ex\Big[\tsum_{x\in \tPhi(0)}
      h\Big(\big\{\big(t,\frac{w(x-t)}{w(x)}\big)
      : t\in \tP',\; x\in \tPhi(t)\big\}\Big) w(x)^{\alpha}\Big] 
    \end{align}
    for all measurable $h:\sN\to \R_+$, where the summand
    corresponding to $x$ is understood to be $0$ if $w(x)=0$.
  \end{itemize}
\end{theorem}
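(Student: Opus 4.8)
The proof proceeds by computing the Laplace functional of $\Tu[u]\tX=\tX$ (no scaling of positions since $r\equiv1$) conditionally on $\xi>u$, and identifying the limit as $u\toi$. The key structural fact is that $\tX=\Psi(\tP)$ where $\tP=P+\delta_{(0,\zeta_0)}$, so the score at the origin is
\begin{displaymath}
  \xi=\psi(0,\tP)=\tmax_{x\in\tPhi(0)} w(x)\,\zeta_x,
\end{displaymath}
a (random, but mark-independent) finite maximum of weighted i.i.d.\ regularly varying marks. I would first prove (i): conditionally on $\tPhi(0)$ and on the locations $\tP'$, the variable $\xi$ is a maximum of finitely many independent terms $w(x)\zeta_x$, and since $\zeta$ is regularly varying of index $\alpha$, a single-big-jump/Breiman-type argument gives
\begin{displaymath}
  \pr(\xi>u\mid \tP')\sim \Big(\tsum_{x\in\tPhi(0)} w(x)^\alpha\Big)\pr(\zeta>u).
\end{displaymath}
Taking expectations and using $\ex[\tN(0)]<\infty$ together with boundedness of $w$ to justify dominated convergence (the ratio is bounded by $\tN(0)\sup w^{\alpha}$ up to constants for large $u$) yields \eqref{eq:mov_max_xi_is_RV} with $\kappa\in(0,\infty)$; positivity uses $w(0)=1$ so the $x=0$ term contributes at least $1$.

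\textbf{The spectral part.} For (ii), the decisive point is that conditionally on $\{\xi>u\}$, exactly one of the weighted marks $w(x)\zeta_x$, $x\in\tPhi(0)$, is responsible for the exceedance (asymptotically), and that ``winning'' neighbor $x$ has $w(x)\zeta_x\approx\xi$ while all other marks stay $O(1)$. Conditionally on which $x$ wins and on the whole point configuration $\tP'$, the mark $\zeta_x$ is (asymptotically) $u\cdot\eta$ with $\eta$ Pareto$(\alpha)$, and every \emph{other} mark $\zeta_y$, $y\neq x$, remains distributed according to $m$, independently. Then the score that $\Psi(\tP)$ assigns to a generic point $t\in\tP'$ is
\begin{displaymath}
  \tX(t)=\tmax_{y\in\tPhi(t)} w(y-t)\zeta_y,
\end{displaymath}
and after dividing by $u$ only those $t$ with $x\in\tPhi(t)$ retain a non-negligible score, namely $\approx w(x-t)\zeta_x/u\approx w(x-t)\eta$; all points $t$ with $x\notin\tPhi(t)$ get score $\to 0$. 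Hence, in the $\borel_{11}$-vague topology, $\Tu[u]\tX\mid\xi>u$ converges to the point process
\begin{displaymath}
  \big\{(t,\, w(x-t)\,\eta\,/\,?)\ :\ t\in\tP',\ x\in\tPhi(t)\big\},
\end{displaymath}
and passing to the spectral version $\Theta=\Tu[\eta]Y$ divides the scores by $\eta$ and simultaneously reweights the choice of $x$ by the size-biasing factor $w(x)^\alpha/\kappa$ coming from the Breiman asymptotics (larger $w(x)$ makes $x$ more likely to be the winner); this is exactly \eqref{eq:Theta_moving_maxima}, with the convention that $w(x)=0$ contributes nothing since such $x$ can never be the winner. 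To make this rigorous I would compute, for continuous bounded $f$ with support in some $B\in\borel_{11}$,
\begin{displaymath}
  \ex\big[e^{-\tX(f)/u}\,\one{\xi>u}\big]
  =\tsum_{x} \ex\big[e^{-\tX(f)/u}\,\one{x\text{ wins},\ \xi>u}\big]+o(\pr(\xi>u)),
\end{displaymath}
condition on $\tP'$ and on $\{x\text{ wins}\}$, apply Breiman's lemma to the inner conditional expectation in the variable $\zeta_x$, and then un-condition using the refined Campbell formula / Mecke equation for $\tP$.

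\textbf{Main obstacle.} The routine parts (Breiman's lemma, dominated convergence via $\ex[\tN(0)]<\infty$ and bounded $w$) are standard; the delicate point is handling the event where \emph{two} neighbors' weighted marks are simultaneously of order $u$ — this has probability $o(\pr(\xi>u))$ (being like $\pr(\zeta>u)^2$ times a moment of $\tN(0)$), but to control it uniformly one needs that $\tPhi(0)$ is almost surely finite with a finite mean and that $w$ is bounded, and one must be slightly careful that the neighbor relation $x\in\tPhi(t)$ can itself depend on $\tP'$ in a complicated way. A second, more bookkeeping-heavy obstacle is verifying the convergence of the \emph{whole} point process $\Tu[u]\tX$ (not just $\xi$) in the $\borel_{11}$-vague topology rather than merely convergence of one-dimensional functionals: one needs that, restricted to any $B\times(\eps,\infty)$ with $B$ bounded, $\tX$ has asymptotically finitely many relevant points and they converge jointly — again this follows from $\ex[\tN(0)]<\infty$ plus local finiteness of $\tP$, but it requires the single-big-jump heuristic to be promoted to a statement about the full configuration, which is where most of the care in the write-up will go.
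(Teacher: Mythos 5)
Your plan follows the same route as the paper: condition on the unmarked configuration $\tP'$, exploit that the score at any point depends on finitely many i.i.d.\ marks, use a single-big-jump argument in the mark vector to identify the conditional limit (which produces the $w(x)^\alpha$ size-biasing), and then integrate out $\tP'$. Two execution details deserve correction, though.

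First, the ``two winners'' control. You suggest bounding the probability that two weighted marks are simultaneously $O(u)$ by $\pr(\zeta>u)^2$ times ``a moment of $\tN(0)$''; unconditionally that moment is $\ex[\tN(0)^2]$, which is \emph{not} assumed finite here. The paper sidesteps this entirely by working conditionally on $\tP'$: for fixed $\mu'$ with $K_a(\mu')=k$, the vector $(\zeta_1,\dots,\zeta_k)$ is multivariate regularly varying and its limit measure is already concentrated on the coordinate axes, so the ``two big marks'' event has asymptotically vanishing \emph{conditional} contribution without ever touching $\ex[\tN(0)^2]$. If you insist on an explicit decomposition by the winning neighbor, you must do it inside the conditional expectation given $\tP'$, not unconditionally, for the argument to go through under $\ex[\tN(0)]<\infty$ alone.

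Second, the dominated convergence step. The natural bound is $g_u(\tP')/\pr(\zeta>u)\leq \tN(0)\,\pr(\zeta>u/w_*)/\pr(\zeta>u)$, which converges to $\tN(0) w_*^\alpha$ but is \emph{not} a single $u$-free dominating function; plain DCT does not apply. The paper invokes Pratt's lemma (dominated convergence with a convergent sequence of dominating functions). You could alternatively patch this with Potter's bounds, but ``bounded by $\tN(0)\sup w^\alpha$ up to constants for large $u$'' is not on its own a valid justification.

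Finally, a minor point: the un-conditioning step is just integrating out the distribution of $\tP'$; neither the Mecke equation nor the refined Campbell formula is used in the paper's proof of this particular theorem (Mecke-type invariance is used later, in the lemma expressing $\Theta$ in terms of the configuration $W$). Apart from these points the proposal captures the paper's argument, including the key observation that only the marks $\zeta_1,\dots,\zeta_{K_a}$ with $K_a=K_a(\tP')$ enter the Laplace functional with test function supported in $B_a\times(\eps,\infty)$, which is what promotes the single-big-jump heuristic to a statement about the whole configuration.
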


\begin{remark}\label{rem:moving_maxima_notation} 
  For a point measure $\mu$ on $\R^d\times [0,\infty)$ whose
  restriction to $\E=\R^d\times (0,\infty)$ is an element of
  $\sN$, for any function $h$ on $\sN$ we define $h(\mu)$ to be equal
  to the value of $h$ on this restriction. In other words, we simply
  neglect the points of the form $(t,0)$. This is relevant for
  (\ref{eq:Theta_moving_maxima}) since the weighting function $w$ can
  in general attain the value 0.
\end{remark}

\begin{remark}\label{rem:tail_pr_intepretation}
  Observe that the distribution of $\Theta$, in addition to $w, \alpha$ and
  $\Phi$, depends only on the distribution of $\tP'= P' +
  \delta_0$. It can be obtained as follows. First, let $P^*$ be
  distributed as $\tP'$ but from the tilted distribution:
  \begin{align}\label{eq:P_star}
    \pr(P^* \in  \cdot \,)
    := \kappa^{-1}\, \ex\left[\one{\tP' \in \, \cdot}
    \, \tsum_{x\in \tPhi(0)} w(x)^{\alpha} \right] \, ,
  \end{align}
  and denote $\Phi^*(t):=\Phi(t,P^*)$ for all $t\in
  \R^d$. Conditionally on $P^{*}$, let $V$ be a $\Phi^*(0)$-valued
  random element such that $V$ equals $x\in \Phi^*(0)$ with
  probability proportional to $w(x)^\alpha$. Finally, let
  \begin{displaymath}
    \Theta:=\Big\{\big(t, \tfrac{{w}(V-t)}{{w}(V)}\big) : t\in P^{*},\; V\in
    \Phi^*(t) \Big\}
  \end{displaymath}
  restricted to $\E$.
\end{remark}

\begin{example}\label{exa:special_case}
Here we use the notation of \Cref{rem:tail_pr_intepretation}.
\begin{itemize}
    \item[(i)] Assume that $w(t)=0$ for all $t\neq 0$ (and $w(0)=1$), so that
   $\psi(t, P) = \zeta_{t}$.  Then  $P^{*} \eind \tP'$ and $V=0$ almost surely, so $\Theta=\{(0,1)\}$,
  i.e., the extreme scores of $X$ appear in isolation.
  \item[(ii)] {Let now $w(t)\equiv1$, so that
  \begin{align*}
      \psi(t, P) =\tmax_{x\in \Phi(t,P')}\zeta_x \, , 
  \end{align*}
    and assume that $\ex \tN(0)<\infty$. Then (\ref{eq:P_star}) implies that $P^*$ is, compared to $\tP'$, biased towards configurations in which the origin has more neighbors. Furthermore,
  $\Theta = \{(t, 1) : t\in P^*, V\in \Phi^*(t) \}$, where $V$ is
  uniform on $\Phi^*(0)$. Note that necessarily $(0,1)\in
  \Theta$.} Observe also that (\ref{eq:mov_max_xi_is_RV}) implies that
  \begin{align}\label{eq:mov_max_xi_RV_special_case}
    \lim_{u\toi}\frac{\pr(\tmax_{x\in \tPhi(0)} \zeta_x >u)}
    {\pr(\zeta>u)} = \ex[\tN(0)] \, . 
  \end{align}
\end{itemize}
\end{example}

\begin{proof}[Proof of \Cref{thm:tail_conf_moving_maxima}]
  Let $f:\E\to [0,\infty)$ be an arbitrary continuous function such
  that
  \begin{align}\label{eq:moving_maxima_aux0}
    f(t,s)=0 \text{  whenever  } t\notin B_a
    \text{ or } s\leq \epsilon \, , 
  \end{align} 
  for some $a,\epsilon>0$. We extend $f$ to a continuous function
  on $\R^d\times [0,\infty)$ by letting $f(t,0)=0$ for all $t$. For
  notational convenience set $h(\mu):= e^{-\mu(f)}$ for all
  $\mu \in \sN_{11}$.  \rev{Both (i) and (ii)} would follow immediately if we show that
  \begin{align}\label{eq:moving_maxima_aux1}
    \lim_{u\toi} \frac{\ex[h(T_{1,u} \tX)\one{\tX(0)>u}]}
    {\pr(\zeta>u)} = \kappa \ex[h(Y)] \, ,
  \end{align}
  where $Y:= T_{1,\eta^{-1}}\Theta$ for $\Theta$ from
  (\ref{eq:Theta_moving_maxima}) and $\eta$ is $\mathrm{Pareto}(\alpha)$
  distributed and independent of $\Theta$.

  First, write $\tP=\{(t_{i}, \zeta_{i}) : i\geq 1\}$, where
  $\{t_1,\dots, t_{\tN(0)}\} = \tPhi(0)$.  Given \rev{the projection
  $\tP'$ of $\tP$}, the score
  $\psi(t,\tP)$ depends only on $\zeta_x$ for $x\in \tPhi(t)$. By
  (\ref{eq:moving_maxima_aux0}),
  \begin{align}
    h(T_{1,u}\tX)
    &= \exp\left\{-\tsum_{(t,s)\in \tX} f(t,s/u)\right\}
      = \exp\left\{- \tsum_{(t,s)\in \tX_{B_a}}
      f(t,s/u)\right\} \nonumber \\ 
    & = \exp\left\{- \tsum_{i=1}^{K_a}
      f(t_{i},\psi(t_i, \tP)/u)\right\}=:
      g\big(\tP', \zeta_1/u,\zeta_2/u,\dots, \zeta_{K_a}/u\big)  \, ,
      \label{eq:moving_maxima_aux2}
  \end{align}
  where $K_a=K_a(\tP')$ is the smallest nonnegative integer such that
  $\{t_1,\dots, t_{K_a}\}$ contains $\tPhi(t)$ for all
  $t\in \tP'_{B_a}$.
  Observe that $K_a\geq 1$.
  Since $\tP'$ and $(\zeta_1,\zeta_2,\ldots)$ are independent,
  conditioning on $\tP'$ yields that
  \begin{align}\label{eq:moving_maxima_aux2dot5}
    \ex\Big[h(T_{1,u}\tX)\one{\tX(0)>u}\Big] = \ex\big[g_u(\tP')\big] \, ,
  \end{align}
  where, for all $\mu'$ from the support of $\tP'$, 
  \begin{align}\label{eq:moving_maxima_aux2dot75}
    g_u(\mu') := \ex\Big[g\big(\mu', \zeta_1/u,\dots,
    \zeta_{K_a(\mu')}/u\big) \ind{\tmax_{i\leq N(0,\mu')}w(t_i) \zeta_i>u}\Big], 
  \end{align}
  and $(t_i)_{i\geq1}$ are deterministic and depend only on $\mu'$.  Observe
  that for every fixed $\mu$ (write $k:=K_a(\mu')$ and $n:=N(0,\mu')$,
  so $k\geq n$), the function under the expectation
  \begin{align*}
    (y_1,\dots,y_{k}) \mapsto g(\mu, y_1,\dots, y_{k})
    \ind{\tmax_{i\leq n}w(t_i) y_i>1}
  \end{align*}
  is bounded (since $h$ is bounded) and continuous except on the set
  \begin{displaymath}
    \big\{(y_1,\dots, y_{k}): \tmax_{i\leq n}w(t_i) y_i=1\big\}
  \end{displaymath}
  (since $f$ and hence $h$ is continuous). Furthermore, this function
  has support bounded away from the origin in $\R^{k}$, since it
  vanishes whenever
  \begin{align*}
    \max\{y_1, y_2,\dots, y_n\} \leq 1 / \tmax_{i\leq n}w(t_i) \, .
  \end{align*}

  Since $(\zeta_{i})_{i}$ are i.i.d.\ regularly varying with index
  $\alpha$, the vector $(\zeta_{1},\dots, \zeta_{k})$ is multivariate
  regularly varying in $\R^k_+\setminus\{0\}$ with the same index, see
  \cite[p.~192]{resnick:2007}. In particular,
  \begin{multline}\label{eq:moving_maxima_aux3}
    \lim_{u\toi}\frac{g_u(\mu')}{\pr(\zeta>u)} 
    = \int_{\R_+^{k}\setminus\{0\}}g(\mu', y_1,\dots, y_{k})
    \ind{\tmax_{i\leq n}w(t_i) y_i>1} \, \dx\nu(y_1,\dots,y_{k} )
  \end{multline}
  for a certain measure $\nu$ on $\R_+^{k}\setminus\{0\}$ concentrated
  on the axes. More precisely, by \cite[p.~192]{resnick:2007}, if
  $\eta$ is $\mathrm{Pareto}(\alpha)$ distributed, the right-hand side
  of (\ref{eq:moving_maxima_aux3}) equals
  \begin{multline*}
    \tsum_{i=1}^n\int_{0}^{\infty}g(\mu',
    \underbrace{0,\dots,0}_{i-1},y,0,\dots,0)\one{w(t_i)y>1}    
    \alpha y ^{-\alpha-1} \dx y  \\
    = \tsum_{i=1}^n \ex\big[g(\mu',
    \underbrace{0,\dots,0}_{i-1},\eta/w(t_i),0,\dots,0)\big] w(t_i)^\alpha,
  \end{multline*}
  where the $i$th summand on the right-hand side is set to be 0 if
  $w(t_i)=0$.  Recalling that $g$ was defined at
  (\ref{eq:moving_maxima_aux2}),
  this equals
  \begin{align*}
    \ex \Big[\tsum_{i=1}^n \exp\left\{-\tsum_{j\leq k \, :
    \, t_i\in \Phi(t_j,\mu')} f\big(t_j,
    \frac{w(t_i-t_j)}{w(t_i)}\eta \big) -\tsum_{j\leq k \, :
    \, t_i\notin \Phi(t_j,\mu')} f(t_j,0) \right\} w(t_i)^\alpha \Big]\, . 
  \end{align*}
  Recall that $\{t_1,\dots, t_n\}=\Phi(0,\mu')$ and that $n\leq
  k$. Since $f(t,0)=0$ for all $t\in \R^d$ and since for
  $j>k=K_a(\mu')$, one has $t_j\notin B_a$ and 
  $f(t_j, y)=0$ regardless of $y$, the expression above (and therefore
  the right-hand side of (\ref{eq:moving_maxima_aux3})) actually
  equals
  \begin{align*}
    \ex \Big[\tsum_{x\in \Phi(0,\mu')} \exp\left\{-\tsum_{t \in \mu' \, :
    \, x \in \Phi(t,\mu')} f\big(t, \frac{w(x-t)}{w(x)}\eta \big)
    \right\} 
    w(x)^\alpha \Big] \, .
  \end{align*} 
  Going back to (\ref{eq:moving_maxima_aux2dot5}),
  (\ref{eq:moving_maxima_aux3}) yields
  \begin{multline}\label{eq:moving_maxima_aux4}
    \lim_{u\toi} \frac{\ex[h(T_{1,u} \tX)
      \one{\tX(0)>u}]}{\pr(\zeta>u)}
    = \lim_{u\toi}\ex\Big[\frac{g_u(\tP')}{\pr(\zeta>u)} \Big]
    = \ex\Big[\lim_{u\toi}\frac{g_u(\tP')}{\pr(\zeta>u)} \Big] \\
    = \ex\big[ \tsum_{x\in \Phi(0,\tP')} \exp\left\{-\tsum_{t \in \tP' \, :
        \, x \in \tPhi(t)} f\big(t, \frac{w(x-t)}{w(x)}\eta \big)
    \right\} w(x)^\alpha\Big]\, ,
  \end{multline}
  where $\eta$ and $\tP'$ are independent, which is precisely
  (\ref{eq:moving_maxima_aux1}). It remains to justify the interchange
  of the limit and expectation in (\ref{eq:moving_maxima_aux4}).

  Since $g$ is bounded by $1$ and $w_*:=\sup_{t\in \R^d} w(t)<\infty$, for
  each $\mu'$ and $u>0$, we have $g_u(\mu') \leq N(0,\mu')
  \pr(\zeta>u/w_*)$. The regular variation property of  $\zeta$ yields
  that
  \begin{align*}
    0\leq \frac{g_u(\tP')}{\pr(\zeta>u)} \leq
    \tN(0) \frac{\pr(\zeta>u/w_*)}{\pr(\zeta>u)} \to \tN(0) w_{*}^\alpha
  \end{align*}
  almost surely as $u\toi$. Moreover, since
  $\ex[\tN(0)w_*^\alpha]=\ex[\tN(0)]w_*^\alpha $ is finite by
  assumption, Pratt's extension of the dominated convergence theorem
  (see \cite[Theorem~1]{pratt:1960}) justifies the interchange
  \begin{align*}
    \lim_{u\toi}\ex\Big[\frac{g_u(\tP')}{\pr(\zeta>u)} \Big]
    = \ex\Big[\lim_{u\toi}\frac{g_u(\tP')}{\pr(\zeta>u)} \Big] \, ,
  \end{align*}
  and this finishes the proof.
\end{proof}

\subsubsection*{Point process convergence}
\label{sub_moving_maxima_Q_theta}

In the following assume that the assumptions of
\Cref{thm:tail_conf_moving_maxima} hold so that $X$ admits a tail
configuration $Y$ whose spectral configuration $\Theta$ is given by
(\ref{eq:Theta_moving_maxima}); recall the convention explained in
\Cref{rem:moving_maxima_notation}. 
In order to determine
the ingredients of the limiting point process in
\Cref{thm:main_result}, the following result is crucial; our approach used here is similar to the one taken by \cite[Section~4]{planinic:2021} for random fields over $\Z^d$.

\begin{lemma}
  Denote by $W$ the random element in $\sN$ equal to 
  \begin{align}\label{eq:mov_max_W}
    \big\{(t,w(-t)) : t\in \tP', 0\in \tPhi(t)\big\}
  \end{align}
  restricted to $\E$. Then the spectral tail configuration $\Theta$
  from (\ref{eq:Theta_moving_maxima}) satisfies
  \begin{align}\label{eq:mov_max_Theta_using_W}
    \ex[h(\Theta)] 
    = \kappa^{-1} \ex\left[\tsum_{(t,s)\in W}
    h(T_{1,s} \shift{t}W) s^\alpha \right]
  \end{align}
  for all $h:\sN\to [0,\infty)$. In particular, 
  \begin{align}\label{eq:mov_max_kappa_alt}
    \kappa=\ex\left[\tsum_{(t,s)\in W} s^\alpha\right].
  \end{align}
\end{lemma}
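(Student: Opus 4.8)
The plan is to read off \eqref{eq:mov_max_Theta_using_W} from the explicit formula \eqref{eq:Theta_moving_maxima} for $\Theta$: the point measure $W$ encodes exactly the data ``which points $t\in\tP'$ have $0$ as a neighbour, with weight $w(-t)$'', and the sum over $x\in\tPhi(0)$ in \eqref{eq:Theta_moving_maxima} can be turned into a sum over the points of $\tP'$ by the Mecke invariance \eqref{eq:mecke} of the Palm version (applied with an integrand that ignores the marks, equivalently the Mecke equation for the unit‑rate Poisson process $P'$). First I would unpack the right‑hand side of \eqref{eq:mov_max_Theta_using_W}: a pair $(t,s)$ is a point of $W$ (after restriction to $\E$) precisely when $t\in\tP'$, $0\in\tPhi(t)$ and $s=w(-t)>0$, and since the factor $s^\alpha$ kills the terms with $w(-t)=0$,
\[
  \tsum_{(t,s)\in W} h(T_{1,s}\shift{t}W)\,s^\alpha
  = \tsum_{t\in\tP'\,:\,0\in\tPhi(t)} h\bigl(T_{1,w(-t)}\shift{t}W\bigr)\,w(-t)^\alpha ,
\]
with the convention that a summand is read as $0$ whenever its weight vanishes, so the restriction of $W$ to $\E$ is harmless.

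The conceptual crux is the identity, valid for every $t_0\in\tP'$ with $0\in\tPhi(t_0)$ and $w(-t_0)>0$,
\[
  T_{1,w(-t_0)}\shift{t_0}W
  = \Bigl\{\bigl(u,\tfrac{w((-t_0)-u)}{w(-t_0)}\bigr) : u\in\shift{t_0}\tP',\ -t_0\in\Phi(u,\shift{t_0}\tP')\Bigr\},
\]
i.e.\ $T_{1,w(-t_0)}\shift{t_0}W$ is exactly the ``slice $x=-t_0$'' of the $\Theta$‑construction in \eqref{eq:Theta_moving_maxima} applied to the shifted configuration $\shift{t_0}\tP'$. To prove it one reparametrizes the points $(t,w(-t))$ of $W$ by $u:=t-t_0\in\shift{t_0}\tP'$, writes $w(-t)=w((-t_0)-u)$, uses the shift‑equivariance of $\Phi$ in the form $\Phi(u,\shift{t_0}\mu')=\Phi(u+t_0,\mu')-t_0$ to turn the condition $0\in\Phi(u+t_0,\tP')$ into $-t_0\in\Phi(u,\shift{t_0}\tP')$, and finally divides all scores by $w(-t_0)$.

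Substituting this into the displayed sum and using shift‑equivariance once more, now as $0\in\Phi(t_0,\tP')\iff-t_0\in\Phi(0,\shift{t_0}\tP')$, the right‑hand side of \eqref{eq:mov_max_Theta_using_W} becomes $\kappa^{-1}\ex\bigl[\tsum_{t_0\in\tP'}H(-t_0,\shift{t_0}\tP')\bigr]$, where
\[
  H(y,\nu'):=\one{y\in\Phi(0,\nu')}\,
  h\Bigl(\bigl\{\bigl(t,\tfrac{w(y-t)}{w(y)}\bigr):t\in\nu',\ y\in\Phi(t,\nu')\bigr\}\Bigr)\,w(y)^\alpha
\]
(set to $0$ when $w(y)=0$) depends only on the positions of the configuration. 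Applying \eqref{eq:mecke} gives $\ex[\tsum_{t_0\in\tP'}H(-t_0,\shift{t_0}\tP')]=\ex[\tsum_{t_0\in\tP'}H(t_0,\tP')]$; since $\Phi(0,\tP')=\tPhi(0)\subseteq\tP'$, the indicator $\one{t_0\in\Phi(0,\tP')}$ collapses the outer sum to $x\in\tPhi(0)$, and the resulting expression is exactly $\kappa\,\ex[h(\Theta)]$ by \eqref{eq:Theta_moving_maxima}. Dividing by $\kappa$ yields \eqref{eq:mov_max_Theta_using_W}, and choosing $h\equiv1$ then gives \eqref{eq:mov_max_kappa_alt}.

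Since the argument is essentially Palm calculus, the obstacles are bookkeeping rather than conceptual. One must check that the $w=0$ terms are consistently interpreted as $0$ on both sides, and that $W$ — together with its shifts and rescalings — lies in $\sN_{01}$ almost surely, so that $h$ may be evaluated on it; this follows from $\ex\bigl[\tsum_{t\in\tP':0\in\tPhi(t)}w(-t)^\alpha\bigr]=\ex\bigl[\tsum_{x\in\tPhi(0)}w(x)^\alpha\bigr]=\kappa<\infty$ (again Mecke, together with \eqref{eq:mov_max_xi_is_RV}). The step requiring care is the double use of the shift‑equivariance of $\Phi$ — once to move the condition $0\in\Phi(u+t_0,\cdot)$ inside the shifted configuration, once to rewrite $0\in\Phi(t_0,\tP')$ as a condition on $\shift{t_0}\tP'$ — but both instances are mechanical. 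A minor formality is that \eqref{eq:mecke} is stated for marked point processes; here it is used with an integrand that does not see the marks, which is equivalent to the Mecke/Slivnyak identity for $\tP'=P'+\delta_0$.
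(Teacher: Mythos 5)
Your proof is correct and takes essentially the same route as the paper: both rewrite the sum over neighbors of $0$ as a sum over all points of $\tP'$, use the shift-equivariance of $\Phi$ to recenter the configuration, and then invoke the Mecke/point-stationarity identity~\eqref{eq:mecke} to pass between $\shift{t_0}\tP'$ and $\tP'$. The only difference is cosmetic — you start from the right-hand side of~\eqref{eq:mov_max_Theta_using_W} and arrive at~\eqref{eq:Theta_moving_maxima}, whereas the paper runs the same chain of identities in the opposite direction.
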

\begin{proof}
  Observe that
  \begin{align*}
    \kappa \ex[h(\Theta)]
    &= \ex\Big[\tsum_{x\in \tP'}
      h\Big(\big\{\big(t,\frac{w(x-t)}{w(x)}\big)
      : t\in \tP',\; x\in \tPhi(t)\big\}\Big)\one{x\in \tPhi(0)} w(x)^{\alpha}\Big] \\
    & =  \ex\Big[\tsum_{x\in \tP'}
      h\Big(\big\{\big((t-x)+x,\frac{w(-(t-x))}{w(x)}\big)
      : t-x\in \shift{x}\tP',\; 0\in \Phi(t-x,\shift{x}\tP')\big\}\Big) \\
    & \qquad\qquad \qquad\qquad \qquad\qquad \qquad\qquad \qquad\qquad
      \times \ind{0\in \Phi(-x,\shift{x}\tP')} w(x)^{\alpha}\Big] \\
    & =  \ex\Big[\tsum_{x\in \tP'}
      h\Big(\big\{\big(t+x,\frac{w(-t)}{w(x)}\big)
      : t\in \shift{x}\tP',\; 0\in \Phi(t,\shift{x}\tP')\big\}\Big)
      \one{0\in \Phi(-x,\shift{x}\tP')} w(x)^{\alpha}\Big] \\
    &=: \ex\Big[\tsum_{x\in \tP'}
      g(-x,\shift{x}\tP')\Big] \, ,
  \end{align*}
  where we used the shift-equivariance of $\Phi$ to obtain the second
  equality. The point-stationarity of $\tP'$ (see (\ref{eq:mecke}))
  yields that 
  \begin{align*}
    \kappa \ex[h(\Theta)]
    &= \ex\Big[\tsum_{x\in \tP'}g(x,\tP')\Big] \\
    & =   \ex\Big[\tsum_{x\in \tP'}
      h\Big(\big\{\big(t-x,\frac{w(-t)}{w(-x)}\big)
      : t\in \tP',\; 0\in \Phi(t,\tP')\big\}\Big)
      \one{0\in \Phi(x,\tP')} w(-x)^{\alpha}\Big] \\
    &= \ex\Big[\tsum_{x\in \tP'}
      h\Big(T_{1,w(-x)} \shift{x}W\Big)\one{0\in \tPhi(x)} w(-x)^{\alpha}\Big] \, ,
  \end{align*}
  which is precisely what we wanted to prove. Expression
  (\ref{eq:mov_max_kappa_alt}) follows by taking $h\equiv 1$.
\end{proof}

{By setting $w \equiv 1$, the two expressions for $\kappa$ in (\ref{eq:mov_max_xi_is_RV}) and (\ref{eq:mov_max_kappa_alt}) imply that
\begin{align*}
    \ex\left[ \tsum_{t\in \tP'} \one{0\in \tPhi(t)}  \right] = \ex [\tN(0)]<\infty \, .
\end{align*}
This implies that $W$, and therefore $\Theta$, almost surely has finitely many points in
$\E=\R^d\times (0,\infty)$. In particular, we can regard $W$ and $\Theta$ as elements of $\sN_{01}$, i.e.\  $\pr(W\in \sN_{01})=\pr(\Theta\in \sN_{01})=1$; the same holds for the tail configuration as well.}

We now turn our attention to the process $\bQ$ defined in (\ref{eq:normalized_typical_cluster}). Recall that $M(\mu)$ denotes the maximal score of $\mu\in \sN_{01}$.

\begin{proposition}
  The distribution of $\bQ$ from (\ref{eq:normalized_typical_cluster})
  in $\tsN_{01}$ is given by
  \begin{align}\label{eq:moving_maxima_Q}
    \pr([Q]\in \cdot\, ) = \frac{1}{\ex[M(W)^\alpha]}
    \ex\Big[\one{T_{1,M(W)}[W]\in \cdot} \; M(W)^\alpha\Big] \, .
  \end{align}
  Moreover, 
  \begin{align}\label{eq:moving_maxima_extremal}
    \vartheta= \frac{\ex\big[\tmax_{(t,s)\in W} s^{\alpha}\big]}
    {\ex\left[\tsum_{(t,s)\in W} s^{\alpha}\right]} \, .
  \end{align}
\end{proposition}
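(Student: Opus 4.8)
The plan is to read off both identities from the transfer formula \eqref{eq:mov_max_Theta_using_W}, which rewrites expectations of functionals of the spectral tail configuration $\Theta$ as size-biased expectations of functionals of $W$, combined with the general representation \eqref{eq:extremal_index_2} of the extremal index and the representation \eqref{eq:normalized_typical_cluster_lo} of $[\bQ]$ on $\tsN_{01}$; both of the latter are available here since $\pr(\Theta\in\sN_{01})=\pr(Y\in\sN_{01})=1$ in the moving maxima model, so the hypotheses of \Cref{sub:alternative_repr} are in force. Two elementary remarks will be used throughout. First, $r\equiv1$ forces $\beta=0$, so every spectral scaling $\Tb[a]$ is just $T_{1,a}$, which rescales scores by $a^{-1}$ and does not move positions; in particular $T_{1,a}T_{1,b}=T_{1,ab}$ and $T_{1,a}$ commutes with all shifts $\shift{t}$. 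Second, since $w(0)=1$ and $0\in\tPhi(0)$, the point $(0,1)$ always belongs to $W$, so $M(W)\ge1$ and $\tsum_{(t,s)\in W}s^\alpha\ge1$ a.s.; together with $\kappa=\ex[\tsum_{(t,s)\in W}s^\alpha]\in(0,\infty)$ (this is \eqref{eq:mov_max_kappa_alt} and \eqref{eq:mov_max_xi_is_RV}) this makes all expectations below finite and all quotients well defined.

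For the extremal index I would apply \eqref{eq:mov_max_Theta_using_W} to the functional $g(\mu):=M(\mu)^\alpha/\tsum_{(t,s)\in\mu}s^\alpha$ (set to $0$ on the null measure). The key point is that $g$ is shift-invariant and invariant under each $T_{1,a}$, because $M$ scales by $a^{-1}$ and $\tsum_{(t,s)\in\mu}s^\alpha$ by $a^{-\alpha}$; hence $g(T_{1,s}\shift{t}W)=g(W)$ for every $(t,s)\in W$, so the inner sum in \eqref{eq:mov_max_Theta_using_W} collapses and, using \eqref{eq:extremal_index_2},
\[
  \vartheta=\ex[g(\Theta)]=\kappa^{-1}\ex\Big[g(W)\tsum_{(t,s)\in W}s^\alpha\Big]=\kappa^{-1}\ex[M(W)^\alpha].
\]
Substituting $\kappa=\ex[\tsum_{(t,s)\in W}s^\alpha]$ and $M(W)=\tmax_{(t,s)\in W}s$ gives \eqref{eq:moving_maxima_extremal}; as a by-product, $\vartheta\kappa=\ex[M(W)^\alpha]$.

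For the typical cluster I would fix a measurable $B\subseteq\tsN_{01}$ and apply \eqref{eq:mov_max_Theta_using_W} to $h(\mu):=\one{[T_{1,M(\mu)}\mu]\in B}\,g(\mu)$, so that \eqref{eq:normalized_typical_cluster_lo} reads $\pr([\bQ]\in B)=\vartheta^{-1}\ex[h(\Theta)]$. For $(t,s)\in W$ one has $M(T_{1,s}\shift{t}W)=M(W)/s$, hence $[T_{1,M(W)/s}(T_{1,s}\shift{t}W)]=[T_{1,M(W)}W]$ (using $T_{1,M(W)/s}T_{1,s}=T_{1,M(W)}$, that $T_{1,\cdot}$ commutes with shifts, and shift-invariance of the class); combined with $g(T_{1,s}\shift{t}W)=g(W)$ this shows $h(T_{1,s}\shift{t}W)=\one{[T_{1,M(W)}W]\in B}\,g(W)$ is again independent of $(t,s)$. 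Feeding this into \eqref{eq:mov_max_Theta_using_W} and using $\vartheta\kappa=\ex[M(W)^\alpha]$,
\[
  \pr([\bQ]\in B)=\vartheta^{-1}\kappa^{-1}\ex\Big[\one{[T_{1,M(W)}W]\in B}\,g(W)\tsum_{(t,s)\in W}s^\alpha\Big]=\frac{\ex[\one{[T_{1,M(W)}W]\in B}\,M(W)^\alpha]}{\ex[M(W)^\alpha]},
\]
which is \eqref{eq:moving_maxima_Q} (recalling $\Tb[{M(W)}][W]=[T_{1,M(W)}W]$ since $\beta=0$).

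I do not expect a genuine obstacle here: the whole argument hinges on the single observation that the test functionals $g$ and $\one{[T_{1,M(\cdot)}\cdot]\in B}\,g(\cdot)$ are invariant under $\mu\mapsto T_{1,s}\shift{t}\mu$, which is exactly what makes the inner sum over $(t,s)\in W$ in \eqref{eq:mov_max_Theta_using_W} factor out as $\tsum_{(t,s)\in W}s^\alpha$ and cancel the denominator of $g$. The remaining points are pure bookkeeping: tracking that $\beta=0$ turns all spectral scalings into $T_{1,\cdot}$, the elementary identities $T_{1,a}T_{1,b}=T_{1,ab}$ and $T_{1,a}\shift{t}=\shift{t}T_{1,a}$, measurability of $g$ and $h$ (routine, via measurability of $\mu\mapsto M(\mu)$ and of the quotient map), and the finiteness and positivity of $\ex[M(W)^\alpha]$ and $\kappa$ so that the quotients are legitimate.
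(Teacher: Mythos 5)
Your proposal is correct and follows essentially the same route as the paper: apply the transfer formula \eqref{eq:mov_max_Theta_using_W} together with \eqref{eq:normalized_typical_cluster_lo} to a functional that, due to $\beta=0$, is both shift-invariant and $T_{1,\cdot}$-invariant (the paper calls this ``homogeneous''), so that the inner sum over $(t,s)\in W$ produces $\tsum_{(t,s)\in W}s^\alpha$ and cancels the denominator of $M(\mu)^\alpha/\tsum s^\alpha$. The paper carries this out in a single pass with an arbitrary bounded shift-invariant $h$ (recovering \eqref{eq:moving_maxima_extremal} by taking $h\equiv1$), whereas you run the argument twice (once with $g$ alone for $\vartheta$, once with $\one{[T_{1,M(\cdot)}\cdot]\in B}\,g(\cdot)$ for $[Q]$); this is merely an organizational difference, not a different method.
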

\begin{proof}
  Using (\ref{eq:normalized_typical_cluster_lo}) and since $\beta=0$,
  for an arbitrary shift-invariant and bounded
  $h:\sN_{01}\to [0,\infty)$, we have
  \begin{align*}
    \vartheta \ex [h(Q)]
    &= \ex \Big[h(T_{1,M(\Theta)}\Theta)\;
      \frac{M(\Theta)^{\alpha}}
      {\tsum_{(t,s)\in\Theta}s^\alpha} \Big]  = : \ex[\tilde{h}(\Theta)] \, .
  \end{align*}
  Observe that $\tilde{h}$ is shift-invariant and homogenous
  in the sense that $\tilde{h}(T_{1,y}\mu)=\tilde{h}(\mu)$ for all
  $\mu\in\sN_{01}$, $y>0$. By (\ref{eq:mov_max_Theta_using_W}),
  \begin{align*}
    \vartheta \,\ex [h(Q)]
    & = \kappa^{-1} \ex\left[\tsum_{(t,s)\in W} \tilde{h}(T_{1,s} \shift{t}W) s^\alpha\right] \\
    &= \kappa^{-1} \ex\left[\tsum_{(t,s)\in W} \tilde{h}(W) s^\alpha\right] \\
    &=  \kappa^{-1} \ex\left[\tsum_{(t,s)\in W} {h}(T_{1,M(W)} W)
      \tfrac{M(W)^\alpha}{\tsum_{(x,y)\in W} y^\alpha } s^\alpha\right] \\
    &= \kappa^{-1} \ex\left[{h}(T_{1,M(W)} W) M(W)^\alpha
      \tfrac{\tsum_{(t,s)\in W}s^\alpha}{\tsum_{(x,y)\in W} y^\alpha } \right] \\
    &=\kappa^{-1} \ex\left[{h}(T_{1,M(W)} W) M(W)^\alpha \right] \, .
  \end{align*}
  Due to (\ref{eq:mov_max_kappa_alt}), taking $h\equiv 1$ yields
  (\ref{eq:moving_maxima_extremal}), while (\ref{eq:moving_maxima_Q})
  follows since $h$ was arbitrary.
\end{proof}

We now give sufficient conditions under which the assumptions of
\Cref{thm:main_result} are satisfied. First, take a family
$(a_{\n})_{\n}$ such that (\ref{eq:a_n}) holds, and fix an arbitrary
family $(b_{\n})_{\n}$ such that $b_{\n} \toi$ and $b_{\n}/\n \to 0$
as $\n \toi$; since $r$ is a constant function, this is equivalent to
choosing $(b_{\n})$ such that \Cref{hypo:t_n} holds.



\begin{hypothesis}\label{hypo:stabilization}
Let $\tsN_g$ be the space of all simple locally finite point measures $\mu'$
on $\R^d$ such that $0\in \mu'$. Assume that there exists a
measurable function $R:\tsN_g\to [0,\infty]$ such that $R(\tP')<\infty$ a.s.\ and for all $\mu'\in \tsN_g$,
\begin{itemize}
    \item[(i)] for all $t\in \mu'$ such that $t\notin B_{R(\mu')}$,  $\Phi(0,\mu')\cap \Phi(t,\mu')=\emptyset$ and $\Phi(t,\mu') = \Phi(t,\mu'\setminus\{0\})$;
  \item[(ii)] $\Phi(0,\mu') = \Phi(0,\nu')$ for all $\nu'\in \sN_g$ such that $\mu'$ and $\nu'$ coincide on $B_{R(\mu')}$, i.e.\ $\Phi(0,\mu')$ is unaffected by changing points in $\mu'$ outside of $B_{R(\mu')}$.
\end{itemize}
\end{hypothesis}

Note that (ii) above necessarily implies that $\Phi(0,\mu') = \Phi(0,\mu'\cap B_{R(\mu')}) \subseteq B_{R(\mu')}$,  for all $\mu'\in \tsN_g$.


\begin{example}
    \begin{itemize}
        \item[(a)]  Assume that for some $r_0>0$,
        \begin{align*}
           \rev{ \Phi(t,\mu') = \{x\in \mu' : |t-x|<r_0\} \, , \; \;  \text{for all $t\in \mu'$} \, .} 
        \end{align*}
        In this case \Cref{hypo:stabilization} is clearly satisfied if we take $R= 2r_0$.
        
        \item[(b)]  If $\Phi(t,\mu')$ is the set containing $t$ and the $k$
nearest neighbors of $t$ in $\mu'\setminus \{t\}$ (with respect to the
Euclidean distance), $R$ satisfying \Cref{hypo:stabilization} can be
constructed as in the proof of \cite[Lemma~6.1]{penrose:yukich:2001},
see also \cite[p.~104]{eichelsbacher:2015}. Observe that in this case taking $R(\mu')$ to be the distance to the $k$-th nearest neighbor of $0$ in $\mu'\setminus \{0\}$ is not sufficient for property (i), and this property is crucial to ensure that the anticlustering condition holds.

  \end{itemize}
\end{example}

\begin{proposition}\label{prop:mov_max_ac}
  If \Cref{hypo:stabilization} holds, the anticlustering condition
  (\ref{eq:AC}) also holds.
\end{proposition}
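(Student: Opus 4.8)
The plan is to bound
$\pr\big(M(\tX_{C_{\n,u}})>a_\n\delta,\ \xi>a_\n\eps\big)$ from above, split it into a part governed by the (random, a.s.\ finite) stabilization radius $R$ and a part governed by a first--moment (Campbell) estimate, and then divide by $\pr(\xi>a_\n\eps)$ and take $\n\to\infty$ followed by $u\to\infty$. Throughout write $w_*:=\sup_{t}w(t)<\infty$ and recall that here $r\equiv1$, so $C_{\n,u}=\ball_{b_\n c}\setminus\ball_u$. From \eqref{eq:a_n}, \eqref{eq:a_n_eps} and \eqref{eq:mov_max_xi_is_RV} one has $\pr(\xi>a_\n\eps)\sim\eps^{-\alpha}\n^{-d}$, $\pr(\zeta>a_\n)\sim(\kappa\n^d)^{-1}$, and $\pr(\zeta>a_\n\gamma/w_*)\sim(w_*/\gamma)^\alpha\pr(\zeta>a_\n)$ for every $\gamma>0$ by regular variation of $\zeta$; in particular $\pr(\xi>a_\n\eps)$, $\pr(\zeta>a_\n)$ are all of order $\n^{-d}$.

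\textbf{The decomposition and the $\{R>u\}$ part.} I would write
\begin{equation*}
\pr\big(M(\tX_{C_{\n,u}})>a_\n\delta,\xi>a_\n\eps\big)\le\pr\big(R>u,\xi>a_\n\eps\big)+\pr\big(M(\tX_{C_{\n,u}})>a_\n\delta,\xi>a_\n\eps,R\le u\big).
\end{equation*}
Since $\xi=\max_{x\in\tPhi(0)}w(x)\zeta_x$ with the marks $(\zeta_x)$ i.i.d.\ and independent of $\tP'$, conditioning on $\tP'$ and a union bound over $x\in\tPhi(0)$ give $\pr(\xi>a_\n\eps\mid\tP')\le\tN(0)\,\pr(\zeta>a_\n\eps/w_*)$; as $R$ is $\sigma(\tP')$--measurable, $\pr(R>u,\xi>a_\n\eps)\le\ex[\one{R>u}\tN(0)]\,\pr(\zeta>a_\n\eps/w_*)$. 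Dividing by $\pr(\xi>a_\n\eps)$ and letting $\n\to\infty$ leaves $(w_*/\eps)^\alpha\kappa^{-1}\,\ex[\one{R>u}\tN(0)]$, which tends to $0$ as $u\to\infty$ because $\ex[\tN(0)]<\infty$ by \Cref{thm:tail_conf_moving_maxima} and $R<\infty$ a.s.

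\textbf{The $\{R\le u\}$ part.} Here part~(iii) of \Cref{hypo:stabilization} is essential: on $\{R\le u\}$ every $t\in\tP'\cap C_{\n,u}$ has $\|t\|\ge u\ge R$, hence $\tPhi(0)\cap\tPhi(t)=\emptyset$, so $\xi$ (a function of $(\zeta_x)_{x\in\tPhi(0)}$) and $\psi(t,\tP)$ (a function of $(\zeta_x)_{x\in\tPhi(t)}$) are conditionally independent given $\tP'$. Since $M(\tX_{C_{\n,u}})=\max_{t\in\tP'\cap C_{\n,u}}\psi(t,\tP)$, a union bound over $t$ yields
\begin{equation*}
\pr\big(M(\tX_{C_{\n,u}})>a_\n\delta,\xi>a_\n\eps,R\le u\big)\le\ex\Big[\one{R\le u}\,\pr(\xi>a_\n\eps\mid\tP')\sum_{t\in\tP'\cap C_{\n,u}}\pr(\psi(t,\tP)>a_\n\delta\mid\tP')\Big].
\end{equation*}
Bounding $\pr(\xi>a_\n\eps\mid\tP')\le\tN(0)\pr(\zeta>a_\n\eps/w_*)$ and likewise $\pr(\psi(t,\tP)>a_\n\delta\mid\tP')\le\tN(t)\pr(\zeta>a_\n\delta/w_*)$, the right-hand side is at most $\pr(\zeta>a_\n\eps/w_*)\pr(\zeta>a_\n\delta/w_*)\,\ex\big[\tN(0)\sum_{t\in\tP'\cap C_{\n,u}}\tN(t)\big]$. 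Granting the first--moment estimate $\ex\big[\tN(0)\sum_{t\in\tP'\cap C_{\n,u}}\tN(t)\big]=O(b_\n^d)$ (uniformly in $\n$), this term is $O\big(b_\n^d\,\pr(\zeta>a_\n)^2\big)$, so after division by $\pr(\xi>a_\n\eps)$ it is $O\big(b_\n^d\,\pr(\zeta>a_\n)\big)=O\big((b_\n/\n)^d\big)\to0$ as $\n\to\infty$. Combining with the previous step gives \eqref{eq:AC}.

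\textbf{Main obstacle.} The crux is precisely the bound $\ex\big[\tN(0)\sum_{t\in\tP'\cap C_{\n,u}}\tN(t)\big]=O(b_\n^d)$: the cardinality $\tN(0)$ of the neighbourhood at the origin is a priori correlated with the cardinalities $\tN(t)$ of the far neighbourhoods, and only $\ex[\tN(0)]<\infty$ (not higher moments) is available. I would resolve this by decoupling the near and far contributions. On $\{R\le u\}$ one has $\tN(0)\le\tP'(\ball_u)$, which is measurable with respect to $\tP'\cap\ball_u$; applying Mecke's formula to the sum over $t\in\tP'\cap C_{\n,u}$, together with the independence of the Poisson process on disjoint regions, the genuinely coupled contribution comes only from far points $t$ whose neighbourhood $\tPhi(t)$ reaches back into $\ball_u$. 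Applying \Cref{hypo:stabilization} to the shifted configurations $\shift{t}\tP'$ confines each $\tPhi(t)$ to a ball of a.s.\ finite radius about $t$, so such $t$ are rare for large $u$; their contribution is split off and shown negligible, exactly as the $\{R>u\}$ term above. (When $N(t,\mu')$ is bounded — for instance the $k$--nearest--neighbour neighbourhoods of the example following \Cref{hypo:stabilization} — the estimate is immediate, since then $\ex\big[\tN(0)\sum_{t\in\tP'\cap C_{\n,u}}\tN(t)\big]\le(k+1)^2\Leb(C_{\n,u})$.)
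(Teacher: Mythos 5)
Your opening decomposition (splitting on $\{R>u\}$ versus $\{R\le u\}$) and your treatment of the $\{R>u\}$ part are correct and coincide with what the paper does. The problem is in the $\{R\le u\}$ part, and it is exactly the obstacle you flag yourself: after applying the union bound to \emph{both} conditional probabilities you end up with a factor $\tN(0)\sum_{t\in\tP'\cap C_{\n,u}}\tN(t)$ inside an expectation, which is a second-moment quantity. The only integrability hypothesis available is $\ex[\tN(0)]<\infty$, so this expectation need not even be finite, and your claim that it is $O(b_\n^d)$ is not justified by the standing assumptions. Your proposed repair via Mecke's formula and the Poisson independence on disjoint regions does not close this gap: \Cref{hypo:stabilization} controls only $\tPhi(0)$ (it says $\tPhi(0)\subset\ball_R$ and $\tPhi(0)\cap\tPhi(t)=\emptyset$ for far $t$), and it says nothing about $\Phi(t,\cdot)$ being locally determined. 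Hence $\tN(t)$ for $t\in C_{\n,u}$ may well depend on the configuration inside $\ball_u$, and the claimed decoupling of $\tN(0)$ from the sum $\sum_t\tN(t)$ has no basis. (Your remark that the bound is immediate when $N$ is uniformly bounded is true, but that is a strictly stronger hypothesis than the one assumed.)

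The paper avoids the second-moment issue entirely by \emph{not} applying a union bound to the far-field term. Given $\tP'$, on $\{R<u\}$ the two maxima are conditionally independent, and only $\pr(\xi>a_\n\eps\mid\tP')$ is bounded by $\tN(0)\,\pr(\zeta>a_\n\eps)$; the far-field conditional probability $\pr(\max_{t\in P'\cap C_{\n,u},x\in\Phi(t,P')}\zeta_x>a_\n\delta\mid\tP')$ is kept intact. Taking expectation and using that $\tN(0)$ is $\tP'$-measurable, this yields
\begin{align*}
  \frac{\pr(A_{\n,u})}{\pr(\zeta>a_\n\eps)}
  \le \ex\Big[\one{\max_{t\in P'\cap C_{\n,u},\, x\in\Phi(t,P')}\zeta_x>a_\n\delta}\,\tN(0)\Big]
  + \ex\big[\one{R\ge u}\,\tN(0)\big]\,,
\end{align*}
with $\tN(0)$ appearing only linearly. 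One then truncates $\tN(0)$ at level $n$: the truncated part is handled by a single Campbell/first-moment estimate (which gives $\pr(\max_{t\in P'\cap C_{\n,u},x\in\Phi(t,P')}\zeta_x>a_\n\delta)=O(b_\n^d/\n^d)\to 0$), and the remainder $\ex[\tN(0)\one{\tN(0)>n}]$ vanishes as $n\toi$ by $\ex[\tN(0)]<\infty$. To repair your proof along its own lines, you should replace the double union bound on $\{R\le u\}$ by this asymmetric bound followed by truncation of $\tN(0)$; as written, the estimate $\ex[\tN(0)\sum_t\tN(t)]=O(b_\n^d)$ is a gap.
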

\begin{proof}
{Denote $\tilde{R}:=R(\tP')$} and $w_*:=\max_{x\in \R^d} w(x)$. Recall that
  $w_*\in [1,\infty)$ since $w(0)=1$ and $w$ is a bounded
  function. For notational convenience, assume that $w_{*}=1$; the
  proof below is easily extended to the general case. Then, for every
  $t\in \tP'$,
  \begin{align*}
    \psi(t,\tP)\leq  \tmax_{x\in \Phi(t,\tP')}  \zeta_x \, .
  \end{align*}
  Due to (\ref{eq:mov_max_xi_is_RV}), for arbitrary
  $\epsilon,\delta,c>0$, to show (\ref{eq:AC}) it suffices to prove
  that
  \begin{align}\label{eq:ac_moving_maxima_aux1}
    \lim_{u\toi}\limsup_{\n \toi} \frac{\pr(A_{\n,u})}{\pr(\zeta>a_{\n}\epsilon)} = 0 \, ,
  \end{align} 
  where 
  \begin{align*}
    A_{\n,u}:=\left\{\tmax_{t\in \tP'\cap C_{\n,u}, x\in
    \tPhi(t)} \zeta_x> a_{\n}\delta,
    \; \tmax_{x\in \tPhi(0)} \zeta_x >a_{\n}\epsilon\right\} \, .
  \end{align*}
  Fix a $u>0$.
  If $\tilde{R}<u$, since $C_{\n,u} \subseteq B_u^c$, \Cref{hypo:stabilization}(i) implies that the families
  $\{ \zeta_x : x\in \tPhi(t) \text{ for some } t\in \tP'\cap
  C_{\n,u}\}$ and $\{\zeta_x : x\in \tPhi(0)\}$ consist of completely
  different sets of the $\zeta_x$'s, hence are independent (given
  $\tP'$). {Moreover, if $\tilde{R}<u$, since $\tP'=P'\cup \{0\}$, \Cref{hypo:stabilization}(i) also implies that}
  \begin{align*}
    \tmax_{t\in \tP'\cap C_{\n,u}, x\in \tPhi(t)} \zeta_x
    = \tmax_{t\in P'\cap C_{\n,u}, x\in \Phi(t, P')} \zeta_x \, .
  \end{align*}
  
  Therefore, 
  \begin{align*}
    \pr(A_{\n,u},\tilde{R}<u \mid \tP')
    &= \one{\tilde{R}<u}  \pr(\tmax_{t\in P'\cap C_{\n,u}, x\in
      \Phi(t,P')} \zeta_x> a_{\n}\delta \mid \tP')
      \pr(\tmax_{x\in \tPhi(0)} \zeta_x >a_{\n}\epsilon \mid \tP') \\
    &\leq  \pr(\tmax_{t\in P'\cap C_{\n,u}, x\in \Phi(t,P')}
      \zeta_x> a_{\n}\delta \mid \tP') \tN(0) \pr(\zeta>a_{\n}\epsilon) \, ,
  \end{align*} 
  where the inequality relies on the fact that $\tP'$ is independently
  marked. Furthermore,
  \begin{align*}
    \pr(A_{\n,u},\tilde{R}\geq u \mid \tP')
    \le \one{\tilde{R}\geq u}  \pr(\tmax_{x\in \tPhi(0)} \zeta_x
    >a_{\n}\epsilon \mid \tP') \le \one{\tilde{R}\geq u}\tN(0) \pr(\zeta>a_{\n}\epsilon) \, .
  \end{align*}
  Conditioning on $\tP'$ yields that
  \begin{align*}
    \frac{\pr(A_{\n,u})}{\pr(\zeta>a_{\n}\epsilon)}
    \leq \ex\left[\ind{\tmax_{t\in P'\cap C_{\n,u}, x\in
    \Phi(t, P')} 
    \zeta_x> a_{\n}\delta} \tN(0) \right]  + \ex\left[\one{\tilde{R}\geq u} \tN(0) \right]
  \end{align*}
  for all $u,\n>0$. The second term on the right-hand side does not
  depend on $\n$ and vanishes as $u\toi$ by the dominated convergence
  theorem since $\pr(\tilde{R}<\infty)=1$ and $\ex \tN(0)<\infty$. For the
  first term, observe that for each $u>0$ and $n\in \N$,
  \begin{multline*}
    \limsup_{\n\toi} \ex\left[\ind{ \tmax_{t\in P'\cap
          C_{\n,u}, x\in \Phi(t, P')}
        \zeta_x> a_{\n}\delta} \tN(0) \right] \\
    \le n \limsup_{\n\toi}\pr(\tmax_{t\in P'\cap C_{\n,u},
      x\in \Phi(t, P')} 
    \zeta_x> a_{\n}\delta) + \ex[\tN(0)\one{\tN(0)>n}] \, .
  \end{multline*}
  Since $\lim_{n\toi}\ex[\tN(0)\one{\tN(0)>n}]=0$ due to
  $\ex \tN(0)<\infty$, to show (\ref{eq:ac_moving_maxima_aux1}) it
  suffices to prove that
  \begin{align}\label{eq:ac_moving:maxima_aux2}
    \lim_{u\toi}\limsup_{\n \toi} \pr(\tmax_{t\in P'\cap
    C_{\n,u}, 
    x\in \Phi(t, P')} \zeta_x> a_{\n}\delta) = 0 \, .
  \end{align}
  This holds since $a_{\n}$ is chosen so that (\ref{eq:a_n})
  holds, while for all $u$,
  $|C_{\n,u}|/\n^d \leq |B_{b_{\n}c}|/\n^d = \mathrm{const} \cdot \,
  b_{\n}^d/\n^d \to 0$ as $\n\toi$ by the choice of
  $(b_\n)_{\n}$. Indeed, the refined Campbell's formula
  (\ref{eq:refined_Campbell}) gives that
  \begin{align*}
    \pr(\tmax_{t\in P'\cap C_{\n,u}, x\in \Phi(t, P')}
    \zeta_x > a_{\n}\delta)
    & \leq |B_{b_{\n} c}|
      \pr(\tmax_{x\in \tPhi(0)} \zeta_x >a_{\n}\delta) \\
    &= \frac{|B_{b_{\n} c}|}{\n^d} \frac{\pr(\tmax_{x\in
      \tPhi(0)} 
      \zeta_x >a_{\n}\delta)}{\pr(\xi > a_{\n}\delta)}  n^d \pr(\xi > a_{\n}\delta) \, ,
  \end{align*}
  for all $u,\n>0$. Due to (\ref{eq:a_n_eps}),
  (\ref{eq:mov_max_xi_is_RV}) and
  (\ref{eq:mov_max_xi_RV_special_case}), $b_{\n}/\n\to 0$ implies that
  (\ref{eq:ac_moving:maxima_aux2}) holds.
\end{proof}


\begin{proposition}
{If \Cref{hypo:stabilization} holds,}  \Cref{hypo:AI_of_extr_blocks} is satisfied for any $(l_{\n})_{\n}$
  such that $l_{\n}\to \infty$ and $l_{\n}/b_{\n}\to 0$ as $\n\toi$.
\end{proposition}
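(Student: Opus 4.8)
The plan is to run the blocking argument from the proof of \Cref{prop:small-dist-conv}, with one essential modification. There the value of $f_{\n,\bi}(\Tu[a_{\n}]\widehat{\bB}_{\n,\bi})$ turned out to be a function of $P$ restricted to $J_{\n,\bi}$, so the left-hand side of \eqref{eq:AI_blocks_cond} vanished identically for large $\n$. Here the range of dependence of a score is random---it is controlled by the stabilization radius of \Cref{hypo:stabilization}, which is in force throughout this subsection---so this localization will hold only on an event of asymptotically full probability, and the real work is to ensure that the complement of that event, once summed over all $k_{\n}^d$ blocks, still tends to $0$.

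The structural fact that makes the localization possible is that scores above $a_{\n}\delta$ are created by rare points. Put $w_*:=\sup_{t}w(t)<\infty$; a score larger than $a_{\n}\delta$ at $t\in P'$ forces some $x\in\Phi(t,P')$ with $\zeta_x>a_{\n}\delta/w_*$, a point I call \emph{high-mark}. By \eqref{eq:refined_Campbell} the expected number of high-mark points in a set of volume $V$ is $V\,\pr(\zeta>a_{\n}\delta/w_*)$, and \eqref{eq:a_n} with \eqref{eq:mov_max_xi_is_RV} gives $\n^d\pr(\zeta>a_{\n}\delta/w_*)\to(\delta/w_*)^{-\alpha}\kappa^{-1}<\infty$; thus there are only $O(1)$ high-mark points in $[0,\n]^d$ and, since $P$ is independently marked, their positions are independent of $P'$ and of the neighbourhood map $\Phi$. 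Writing $R_x:=R(\shift{x}P')$ for the shift-equivariant, $P'$-measurable, a.s.\ finite stabilization radius, part~(iii) of \Cref{hypo:stabilization} gives $\{t:x\in\Phi(t,P')\}\subseteq B_{R_x}(x)$ for every $x\in P'$, and the $\Phi$-computations entering a score are determined by $P'$ inside the relevant ball.

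I would then take the good event $G_{\n,\bi}$ requiring that every high-mark point $x$ whose ball $B_{R_x}(x)$ meets $\hat{J}_{\n,\bi}$ satisfies $R_x<l_{\n}$. On $G_{\n,\bi}$, for $t\in\hat{J}_{\n,\bi}\cap P'$ with $\psi(t,P)>a_{\n}\delta$, low-mark neighbours contribute at most $a_{\n}\delta$ and hence do not change the maximum defining $\psi(t,P)$, while every high-mark $x\in\Phi(t,P')$ has $t\in B_{R_x}(x)$, so $|x-t|<l_{\n}$ and $x\in B_{l_{\n}}(t)\subseteq J_{\n,\bi}$ (as $t$ is at distance $\ge l_{\n}$ from $\partial J_{\n,\bi}$); using the local determination of $\Phi$, this forces $f_{\n,\bi}(\Tu[a_{\n}]\widehat{\bB}_{\n,\bi})$ to agree on $G_{\n,\bi}$ with a functional $\hat f_{\n,\bi}$ of $P$ restricted to $J_{\n,\bi}$. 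A computation with \eqref{eq:refined_Campbell}, independence of the marks from $(P',R)$, and $\pr(R<\infty)=1$, $l_{\n}\to\infty$, then gives $\sum_{\bi\in I_{\n}}\pr(G_{\n,\bi}^c)\to0$. With $g_{\n,\bi}:=\exp\{-f_{\n,\bi}(\Tu[a_{\n}]\widehat{\bB}_{\n,\bi})\}\in[0,1]$ and $\hat g_{\n,\bi}:=\exp\{-\hat f_{\n,\bi}\}\in[0,1]$, the telescoping bounds $|\tprod_{\bi}g_{\n,\bi}-\tprod_{\bi}\hat g_{\n,\bi}|\le\tsum_{\bi}\one{G_{\n,\bi}^c}$ and $|\tprod_{\bi}\ex g_{\n,\bi}-\tprod_{\bi}\ex\hat g_{\n,\bi}|\le\tsum_{\bi}\pr(G_{\n,\bi}^c)$ then reduce \eqref{eq:AI_blocks_cond} to $\ex[\tprod_{\bi}\hat g_{\n,\bi}]=\tprod_{\bi}\ex[\hat g_{\n,\bi}]$, which holds since the $J_{\n,\bi}$ are disjoint and $P$ is Poisson.

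I expect the main obstacle to be the localization together with the estimate $\sum_{\bi}\pr(G_{\n,\bi}^c)\to0$: one has to exploit that only the stabilization radii of the $O(1)$ high-mark points need to be controlled (not those of every point near them), which keeps the per-block error of order $b_{\n}^d\,\pr(\zeta>a_{\n}\delta/w_*)\,\pr(R\ge l_{\n})\asymp(b_{\n}/\n)^d\pr(R\ge l_{\n})$, summable over $k_{\n}^d\sim(\n/b_{\n})^d$ blocks; the contribution of high-mark points far from $\hat{J}_{\n,\bi}$ whose balls still reach it is handled, in the usual way for stabilizing functionals, through a moment bound on $R$. The remaining items---high-mark points sitting near block boundaries, and matching the trimmed blocks with the $\Phi$-computations---are routine edge bookkeeping as in the proofs of \Cref{prop:small-dist-conv} and \Cref{prop:mov_max_ac}, and the argument is unchanged if $\Phi$ also depends on auxiliary randomness attached to the points of $P$ and independent of the marks.
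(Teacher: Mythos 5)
Your structural idea — localize each block's functional on a ``good event'', bound the error via the telescoping inequality \eqref{eq:elem_ineq}, and use disjointness of the $J_{\n,\bi}$'s together with the Poisson property to factor the leading term — is the same as the paper's. But the specific localization you use is different and introduces genuine difficulties that the paper's choice avoids.

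The paper does not invoke \Cref{hypo:stabilization} at all in this proposition (it is needed only for the anticlustering condition, \Cref{prop:mov_max_ac}). Instead, it defines $D(t,\mu'):=\max_{x\in\Phi(t,\mu')}|x-t|$, which is a.s.\ finite for every $t$ because $\Phi(t,\mu')$ is a finite set — no stabilization hypothesis required. The good event is $A_{\n,\bi}:=\{\max_{t\in \hat{J}_{\n,\bi}\cap P'} D(t,P')\one{\psi(t,P)>a_\n\delta}<l_\n\}$. The key structural advantage of this choice is that the union $\bigcup_{\bi}A_{\n,\bi}^c$ is controlled by summing over $t\in P'\cap[0,\n]^d$, and each such $t$ lies in at most one $\hat{J}_{\n,\bi}$, so there is no overcounting. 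This gives
\begin{align*}
\tsum_{\bi}\pr(A_{\n,\bi}^c) \leq \n^d\,\pr\big(D(0,\tP')\geq l_\n,\ \psi(0,\tP)>a_\n\delta\big)
\leq \n^d\,\ex\big[\one{D(0,\tP')\geq l_\n}\tN(0)\big]\,\pr(\zeta>a_\n\delta/w_*),
\end{align*}
and since $\n^d\pr(\zeta>a_\n\delta/w_*)=O(1)$ and $\ex[\one{D(0,\tP')\geq l_\n}\tN(0)]\to 0$ by dominated convergence (using only $\ex\tN(0)<\infty$ and $D(0,\tP')<\infty$ a.s.), the result follows with nothing more than the standing hypotheses of \Cref{thm:tail_conf_moving_maxima}.

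Your good event $G_{\n,\bi}$ is indexed by \emph{high-mark points} $x$ whose influence region $B_{R_x}(x)$ can reach $\hat J_{\n,\bi}$, and such an $x$ can lie arbitrarily far from the block and can reach many blocks at once when $R_x$ is large. This breaks the clean telescoping: $\sum_{\bi}\pr(G_{\n,\bi}^c)$ picks up a factor of order $(R_x/b_\n+1)^d$ per offending $x$, which is why you are forced to appeal to ``a moment bound on $R$''. That moment bound is a genuinely stronger assumption than anything the paper makes; $\pr(R<\infty)=1$ alone does not give $\ex[\one{R\geq l_\n}R^d]\to 0$, and the proposition as stated (and proved) needs no such condition. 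So the proposal, as written, proves a strictly weaker statement under stronger hypotheses. The fix is to abandon the stabilization radius altogether and track, as the paper does, the neighbourhood diameter $D(t,P')$ of each high-scoring $t$ inside the trimmed block: that quantity is automatically finite, lives inside exactly one block, and already encodes everything needed to localize $\psi(t,P)$.
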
 
\begin{proof}
  Recall that for a fixed family $(l_{\n})_{\n}$, and each $\n>0$ and
  $\bi \in I_{\n}$, the block of indices $J_{\n,\bi}$ is defined by
  (\ref{eq:blocks_of_indices}), its trimmed version $\hat{J}_{\n,\bi}$
  by (\ref{eq:trimmed_blocks}), and that
  $\widehat{\bB}_{\n,\bi}:= X_{\hat{J}_{\n,\bi}}$. Let now
  $f_{\n, \bi} \in \sF$, $\n>0, \bi \in I_{\n}$, be an arbitrary
  family of functions which satisfy \eqref{eq:fmudelta} for a same
  $\delta>0$. For notational convenience, write
  \begin{align*}
    W_{\n,\bi} =  \exp\left\{-f_{\n,\bi}(T_{1,a_{\n}}
    \widehat{\bB}_{\n,\bi})\right\}
  \end{align*}
  for all $\n>0$, $\bi \in I_{\n}$. To confirm
  \Cref{hypo:AI_of_extr_blocks}, we need to show that
  \begin{align}\label{eq:ai_aux1}
    \lim_{\n\toi}\big| \ex\big[\tprod_{\bi \in I_{\n}} W_{\n,\bi}\big]
    -  \tprod_{\bi \in I_{\n}}\ex\big[ W_{\n,\bi}\big]\big|=0 \, .
  \end{align}

  We first extend the definition of the radius $R$ from \Cref{hypo:stabilization} by setting $R(t, \mu'):=R(\shift{t}\mu')$, for all $\mu'\in \tsN_g$, $t\in \mu'$. Using shift-equivariance of $\Phi$, Campbell's formula and the assumption $\pr(R(\tP')<\infty)=1$, it is not difficult to show that almost surely, for all $t\in P'$, $\Phi(t,P') = \Phi(t,\nu')$ for all $\nu'$ which coincide with $P'$ on $B_{R(t,\mu')}(t)$.
  
 Thus, for every $t\in P'$, $R(t,P')<u$ implies
    \begin{align*}
        X(t)=\psi(t,P)= \tmax_{x\in \Phi(t, P')} w(x-t) \zeta_x = \psi(t,P_{B_{u}(t)}) \, .
    \end{align*}
Furthermore,  the
value of $W_{\n, \bi}$ depends
only on those $t\in P'\cap \hat{J}_{\n,\bi}$ with score
$\psi(t,P)>a_{\n}\delta$. In particular, if
  \begin{align*}
      \max_{t\in \hat{J}_{\n,\bi}\cap P'}
    R(t,P')\one{\psi(t,P)>a_{\n}\delta} <l_{\n} \, ,
  \end{align*}
  the random variable $W_{\n,\bi}$ depends only on $P$ restricted to $J_{\n,\bi}$.
  
Construct now $k_{\n}^d$ i.i.d.\ Poisson processes $P_{(\bi)}, \bi\in I_{\n}$, with common distribution equal to the distribution of $P$ and such that for each $\bi \in I_{\n}$, restrictions of $P_{(\bi)}$ and $P$ on the block $J_{\n, \bi}$ coincide. Furthermore, let for each $\bi \in I_{\n}$,  $W_{\n,\bi}^{*}$ be constructed from $P_{(\bi)}$ in the same way as $W_{\n,\bi}$ is constructed from $P$.
In particular, since $W_{\n,\bi}^{*}$'s are independent, 
\begin{align*}
    \big| \ex\big[\tprod_{\bi \in I_{\n}} W_{\n,\bi}\big]
    -  \tprod_{\bi \in I_{\n}}\ex\big[ W_{\n,\bi}\big]\big| = \big| \ex\big[\tprod_{\bi \in I_{\n}} W_{\n,\bi} - \tprod_{\bi \in I_{\n}} W_{\n,\bi}^{*}\big] \big| \, ,
\end{align*}
and moreover, one has $W_{\n,\bi} = W_{\n,\bi}^*$ whenever
\begin{align*}
    \max_{t\in \hat{J}_{\n,\bi}\cap P'}
    R(t,P')\one{\psi(t,P)>a_{\n}\delta} <l_{\n} \; \text{ and } \; \max_{t\in \hat{J}_{\n,\bi}\cap P'}
    R(t,P_{(\bi)}')\one{\psi(t,P_{(\bi)})>a_{\n}\delta} <l_{\n} \, .
\end{align*}
Thus, since  $\cup_{\bi\in I_{\n}} \hat{J}_{\n,\bi}\subseteq [0,\n]^d$ and $0\leq W_{\n,\bi}\le 1$,
\begin{align*}
    \big| \ex\big[\tprod_{\bi \in I_{\n}} W_{\n,\bi} - \tprod_{\bi \in I_{\n}} W_{\n,\bi}^{*}\big] \big|
    &\leq 2\pr\left(\max_{t\in [0,\n]^d\cap P'}
    R(t,P')\one{\psi(t,P)>a_{\n}\delta} \ge l_{\n}\right) \\
    & \leq 2 \ex\big[\tsum_{t\in P'\cap [0,\n]^d}
      \ind{ R(t,P') \geq l_{\n}, \psi(t,P)>a_{\n}\delta}\big]
  \end{align*}
  Using
  shift-invariance and the refined Campbell's theorem
  (\ref{eq:refined_Campbell}) we obtain
  \begin{align*}
    \big| \ex\big[\tprod_{\bi \in I_{\n}} W_{\n,\bi}\big]
    -  \tprod_{\bi \in I_{\n}}\ex\big[ W_{\n,\bi}\big]\big|
    & \leq 2 \ex\big[\tsum_{t\in P'\cap [0,\n]^d}
      \ind{ R(0,\shift{t}P') \geq l_{\n}, \psi(0,\shift{t}P)>a_{\n}\delta}\big] \\
    &= 2 \n^d \pr(R(0,\tP')\geq l_{\n}, \psi(0,\tP)>a_{\n}\delta) \, ,
  \end{align*}
  for all $\n>0$. Since
  \begin{displaymath}
    \xi=\psi(0,\tP)\leq w_{*} \max_{x\in \tPhi(0)} \zeta_{x}
  \end{displaymath}
  for $w_{*}=\max_{t\in \R^d} w(t)\in [1,\infty)$ and $\tP'$ is
  independent of the $\zeta$'s, conditioning on $\tP'$ yields that
  \begin{align*}
    2\n^d \pr(R(0,\tP')\geq l_{\n}, \psi(0,\tP)>a_{\n}\delta)
    \leq 2\n^d \ex[\one{R(0,\tP')\geq l_{\n}}\tN(0)] \pr(\zeta>a_{\n}\delta/w_{*}) \, .
  \end{align*}
  Due to (\ref{eq:mov_max_xi_is_RV}) and (\ref{eq:a_n_eps}),
  $\n^d\pr(\zeta>a_{\n}\delta/w_{*})$ converges to a positive
  constant, so (\ref{eq:ai_aux1}) follows by dominated convergence
  since $l_{\n}\toi$, $\pr(R(0,\tP')<\infty)=1$ and
  $\ex \tN(0)<\infty$.

\end{proof}

The above arguments lead to the following conclusion. 

\begin{proposition}
  Let $X$ be defined as in the beginning of
  \Cref{sub:moving_maxima_tail}. Assume $w$ is a bounded function,
  that $\ex [\tN(0)]<\infty$, and that there exists an $R$ satisfying
  \Cref{hypo:stabilization}. Let $(a_{\n})_{\n}$ be as in
  (\ref{eq:a_n}). Then any family $(b_{\n})_{\n}$, such that
  $b_{\n}\to \infty$ and $b_{\n}/\n \to 0$ as $\n \toi$, satisfies all
  of the assumptions of \Cref{thm:main_result} (with $\alpha$ being
  equal to the tail index of $\zeta$ and $r\equiv 1$) and, therefore,
  the convergence of the extremal blocks in (\ref{eq:pp_convergence})
  holds.
\end{proposition}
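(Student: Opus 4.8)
The statement is a direct corollary of \Cref{thm:main_result} once one checks that its three Assumptions \ref{hypo:t_n}, \ref{hypo:AC} and \ref{hypo:AI_of_extr_blocks} hold under the present hypotheses, so the plan is simply to verify them in turn. Since $w$ is bounded and $\ex[\tN(0)]<\infty$, \Cref{thm:tail_conf_moving_maxima} already provides the tail configuration $Y$ of $X$ for the constant scaling function $r\equiv1$ (hence $\beta=0$), with tail index $\alpha$ equal to that of $\zeta$ and spectral part $\Theta$ as in \eqref{eq:Theta_moving_maxima}; as observed at the start of this subsection, $Y$ and $\Theta$ have a.s.\ finitely many points in $\E$, so that $\pr(Y\in\sN_{01})=\pr(\Theta\in\sN_{01})=1$.

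For \Cref{hypo:t_n}: because $r\equiv1$, the requirement $r(a_\n)/b_\n\to0$ becomes $1/b_\n\to0$, which holds since $b_\n\toi$ by assumption, while $b_\n/\n\to0$ is assumed outright; thus \eqref{eq:T_lambda} holds. For \Cref{hypo:AC}: the anticlustering condition \eqref{eq:AC} is exactly the conclusion of \Cref{prop:mov_max_ac}, whose only additional input is the existence of a random variable $R$ satisfying \Cref{hypo:stabilization}, which we assume. For \Cref{hypo:AI_of_extr_blocks}: the preceding proposition shows that \eqref{eq:AI_blocks_cond} holds for any family $(l_\n)_\n$ with $l_\n\toi$ and $l_\n/b_\n\to0$; since $b_\n\toi$, such a family exists (e.g.\ $l_\n=\sqrt{b_\n}$), so the assumption is met.

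With Assumptions \ref{hypo:t_n}, \ref{hypo:AC} and \ref{hypo:AI_of_extr_blocks} all verified, \Cref{thm:main_result} applies and yields the convergence $N_\n\dto N$ of \eqref{eq:pp_convergence}, with the parameters $\alpha$, $\beta=0$, $\vartheta$ and $Q$ described there (and computed for this model in \eqref{eq:moving_maxima_extremal} and \eqref{eq:moving_maxima_Q}); in particular the convergence of the extremal blocks holds. There is no genuine obstacle at this stage: the substantive work has already been carried out in \Cref{thm:tail_conf_moving_maxima}, \Cref{prop:mov_max_ac} and the preceding proposition, and all that remains is the bookkeeping of matching their hypotheses to those assumed here together with the trivially satisfied scaling condition \eqref{eq:T_lambda}.
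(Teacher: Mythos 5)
Your proof is correct and matches the paper's (implicit) argument: the paper itself introduces this proposition with the phrase ``The above arguments lead to the following conclusion'' and gives no further proof, because it is exactly the synthesis you describe of \Cref{thm:tail_conf_moving_maxima}, \Cref{prop:mov_max_ac}, and the preceding proposition on \Cref{hypo:AI_of_extr_blocks}, together with the trivial verification that $b_\n\toi$ and $b_\n/\n\to0$ give \Cref{hypo:t_n} when $r\equiv1$.
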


\section{Proof of \Cref{thm:main_result}}
\label{subs:proof_main_result}

Recall that $X$ is a stationary marked point process
on $\E$ which admits a tail configuration $Y$ with tail index
$\alpha>0$ and a scaling function $r$ of scaling index
$\beta\in \R$. Moreover, assume that $(a_{\n})_{\n>0}$ satisfies
(\ref{eq:a_n}) and fix a family of block side lengths
$(b_{\n})_{\n>0}$.

For notational convenience, for all $\n>0$ and $y>0$ denote in the
sequel
\begin{align*}
  X_{\n}:=X_{[0,b_\n]^d}, \; \text{and } \; T^{\n}_{y} := T_{r(a_\n
  y), a_\n y}.
\end{align*}
We first extend (\ref{eq:tail_process}) to convergence in the space
$\sN_{01}$ with the $\borel_{01}$-vague topology.

\begin{proposition}\label{prop:conv_to_tail_in_N_star}
  Assume that $(b_\n)_{\n>0}$ satisfies Assumptions~\ref{hypo:t_n} and
  \ref{hypo:AC}. Then 
  \begin{align}\label{eq:Y_in_lo}
    \pr(Y\in \sN_{01})=1 .
  \end{align}
  Furthermore, for all $y>0$
  \begin{align}\label{eq:conv_to_tail_in_N_star}
    \pr(T^{\n}_{y} (\tX_{D_{\tau}}) \in \cdot \mid \xi>a_\n y)
    \wto \pr(Y\in \cdot\,) \quad\text{as}\; \n\toi
  \end{align}
  on $\sN_{01}$, where $(D_\n)_{\n>0}$ is any family of subsets of
  $\R^d$ such that 
  \begin{align}\label{eq:conv_to_tail_in_N_star_incr_sets}
    \ball_{c_1 b_\n}\subseteq D_\n \subseteq \ball_{c_2 b_\n},\quad \n>0,
  \end{align}
  for some constants $0<c_1< c_2$.
\end{proposition}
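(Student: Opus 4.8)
The plan is to deduce everything from the defining convergence \eqref{eq:tail_process}, read along the sequence $u_\n := a_\n y\toi$ (legitimate since $a_\n\toi$ by \eqref{eq:a_n}), which in the notation of this section says that $\pr(T^{\n}_{y}\tX\in\cdot\mid\xi>a_\n y)\wto\pr(Y\in\cdot)$ in the $\borel_{11}$-vague topology, together with the anticlustering Assumption~\ref{hypo:AC}. The first thing I would record is the cheap observation that, by Assumption~\ref{hypo:t_n} and regular variation of $r$ (so that $r(a_\n y)/r(a_\n)\to y^\beta$ and hence $b_\n/r(a_\n y)\toi$), and because $D_\n\supseteq\ball_{c_1 b_\n}$, every point of $\tX$ whose $T^{\n}_{y}$-image lies in a fixed bounded set eventually lies in $D_\n$; thus $T^{\n}_{y}\tX$ and $T^{\n}_{y}\tX_{D_\n}$ coincide on $B\times(0,\infty)$ for any fixed bounded $B$ once $\n$ is large, and consequently $T^{\n}_{y}\tX_{D_\n}\dto Y$ in $\borel_{11}$ as well. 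What is then left is (i) to prove $\pr(Y\in\sN_{01})=1$, and (ii) to upgrade the convergence from the $\borel_{11}$- to the $\borel_{01}$-vague topology; both amount to ruling out exceedances far from the origin, and both I would handle with Assumption~\ref{hypo:AC}.

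For (i), fix $\delta>0$ and recall $C_{\n,u}=\ball_{b_\n c}\setminus\ball_{r(a_\n\epsilon)u}$. Taking $\epsilon=y$ and $c=c_2$, for $\n$ large the rescaled set $r(a_\n y)^{-1}C_{\n,u}$ contains $\ball_a\setminus\overline{\ball_u}$, so that $\{T^{\n}_{y}\tX((\ball_a\setminus\overline{\ball_u})\times(\delta,\infty))\ge1\}\subseteq\{M(\tX_{C_{\n,u}})>a_\n y\delta\}$. The set $\{\mu:\mu((\ball_a\setminus\overline{\ball_u})\times(\delta,\infty))\ge1\}$ is open in the $\borel_{11}$-vague topology (its closure lies in $\borel_{11}$, so $\mu\mapsto\mu(\cdot)$ is lower semicontinuous there), hence the portmanteau theorem gives
\begin{align*}
  \pr\big(Y\big((\ball_a\setminus\overline{\ball_u})\times(\delta,\infty)\big)\ge1\big)\le\limsup_{\n\toi}\pr\big(M(\tX_{C_{\n,u}})>a_\n y\delta\,\big|\,\xi>a_\n y\big).
\end{align*}
Letting $a\toi$ and then $u\toi$, Assumption~\ref{hypo:AC} makes the right-hand side vanish, so $\pr(Y((\R^d\setminus\overline{\ball_u})\times(\delta,\infty))\ge1)\to0$ as $u\toi$; since $Y\in\sN$ is already finite on $\overline{\ball_u}\times(\delta,\infty)$, this forces $Y(\R^d\times(\delta,\infty))<\infty$ a.s., and letting $\delta$ run through a sequence decreasing to $0$ yields \eqref{eq:Y_in_lo}. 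Along the way one also gets $\lim_{a\toi}\pr\big(Y((\R^d\setminus\overline{\ball_a})\times(\delta,\infty))\ge1\big)=0$ for every $\delta>0$, which is what (ii) needs.

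For (ii), by the Laplace functional criterion for $\borel_{01}$-vague convergence it suffices to show, for every bounded continuous $f\ge0$ with $f\equiv0$ on $\R^d\times(0,\epsilon_0]$, that $\ex[e^{-(T^{\n}_{y}\tX_{D_\n})(f)}\mid\xi>a_\n y]\to\ex[e^{-Y(f)}]$. I would fix $a>0$ and a continuous cut-off $\chi_a:\R^d\to[0,1]$, equal to $1$ on $\ball_a$ and to $0$ off $\ball_{a+1}$, and split $f=f\chi_a+f(1-\chi_a)$. The truncated part $f\chi_a$ has support in $\borel_{11}$, so $\ex[e^{-(T^{\n}_{y}\tX_{D_\n})(f\chi_a)}\mid\xi>a_\n y]\to\ex[e^{-Y(f\chi_a)}]$ by the $\borel_{11}$-convergence established above. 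For any $\mu\in\sN$ with finitely many points above level $\epsilon_0$ one has the elementary bound $|e^{-\mu(f)}-e^{-\mu(f\chi_a)}|\le1-e^{-\mu(f-f\chi_a)}\le\one{\mu((\R^d\setminus\overline{\ball_a})\times(\epsilon_0,\infty))\ge1}$, since $f-f\chi_a$ can only be positive at points with position outside $\ball_a$ and score above $\epsilon_0$. Applied to $\mu=Y$ (valid by (i)) this contributes at most $\pr(Y((\R^d\setminus\overline{\ball_a})\times(\epsilon_0,\infty))\ge1)\to0$ as $a\toi$; applied to $\mu=T^{\n}_{y}\tX_{D_\n}$, and using $D_\n\subseteq\ball_{c_2 b_\n}$ to land inside $C_{\n,a}$, the corresponding conditional probability has, by Assumption~\ref{hypo:AC} (again with $\epsilon=y$, $c=c_2$), a $\limsup_{\n}$ which tends to $0$ as $a\toi$. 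Combining the three estimates and letting $a\toi$ closes the argument.

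The step I expect to be the real obstacle, and the crux of the proposition, is (i): it is the only place where one must genuinely leave the bounded-window world, and it succeeds precisely because Assumption~\ref{hypo:AC} is tailored to forbid mass of the rescaled Palm process from escaping to spatial infinity at the block scale $b_\n$; the main technical care is in matching the annuli $C_{\n,u}$ with the rescaled balls and in choosing the parameters $\epsilon=y$, $c=c_2$ so that the sandwich $\ball_{c_1 b_\n}\subseteq D_\n\subseteq\ball_{c_2 b_\n}$ is exploited on both sides. Once $Y\in\sN_{01}$ is in hand, part (ii) is a routine truncation.
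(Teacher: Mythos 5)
Your argument is correct, and for the second assertion it takes a genuinely different route from the paper. Let me compare.

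For $\pr(Y\in\sN_{01})=1$, your portmanteau argument on the open sets $\{\mu:\mu((\ball_a\setminus\overline{\ball_u})\times(\delta,\infty))\ge 1\}$ and the paper's argument (which fixes an outer radius $u'$ with $Y$-null boundary, applies the defining weak convergence, and then lets $u'\toi$) are close in spirit; both are essentially the observation that Assumption~\ref{hypo:AC} controls, via the tail-process convergence, the mass of $Y$ far from the origin. Your open-set version is slightly tidier in that it avoids having to discard a countable exceptional set of radii, but the ideas are the same.

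For the upgrade to $\borel_{01}$-vague convergence, the two proofs diverge. The paper introduces the explicit metric $\metric$ on $\sN_{01}$, proves $\pr((T^\n_y\tX)_{\ball_u}\in\cdot\mid\xi>a_\n y)\wto\pr(Y_{\ball_u}\in\cdot)$ and $Y_{\ball_u}\wto Y$, and then closes the triangle by verifying
\begin{align*}
\lim_{u\toi}\limsup_{\n\toi}\ex\big[\metric\big((T^\n_y\tX)_{\ball_u},\,T^\n_y(\tX_{D_\n})\big)\,\big|\,\xi>a_\n y\big]=0,
\end{align*}
invoking the ``converging together'' theorem of Billingsley; the key estimate is that if no rescaled score in the annulus $D_\n\setminus\ball_{r(a_\n y)u}$ exceeds $\epsilon$, then the metric distance is at most $e^{-1/\epsilon}$, after which Assumption~\ref{hypo:AC} finishes. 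Your proof instead works directly with Laplace functionals: decompose $f=f\chi_a+f(1-\chi_a)$, handle the compactly supported piece by the already-established $\borel_{11}$-convergence, and bound the tail piece by the indicator $\ind{\mu((\R^d\setminus\ball_a)\times(\epsilon_0,\infty))\ge 1}$, which you then control on the $\n$-side via $D_\n\subseteq\ball_{c_2 b_\n}$ and Assumption~\ref{hypo:AC}, and on the $Y$-side via the bound produced in step (i). Both arguments have the same analytic core (the annulus $C_{\n,u}$ and the anticlustering bound), but yours bypasses the metric $\metric$ and Billingsley's triangular convergence entirely and is therefore somewhat more elementary and self-contained; the paper's route has the side benefit of setting up the metric $\metric$ that is also used elsewhere (e.g.\ in \Cref{lem:I_blocks_cond_for_original_blocks}). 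One trivial slip: in your bound $|e^{-\mu(f)}-e^{-\mu(f\chi_a)}|\le\ind{\mu((\R^d\setminus\overline{\ball_a})\times(\epsilon_0,\infty))\ge 1}$, the correct set on the right is $\R^d\setminus\ball_a$ rather than $\R^d\setminus\overline{\ball_a}$ (the cut-off $\chi_a$ is only guaranteed to equal $1$ on the open ball), but this does not affect the argument.
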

\begin{proof}
  Recall that $\pr(Y\in \sN_{11})=1$, i.e., $Y$ a.s. has finitely many
  points in $B\times (\delta,\infty)$ for all bounded $B\subset\R^d$
  and all $\delta>0$. To prove (\ref{eq:Y_in_lo}), we need to show
  that $Y(\R^d\times (\delta,\infty))<\infty$ a.s.\ for all
  $\delta>0$.  Fix $u,\delta>0$ and observe that
  \begin{align*}
    \pr\Big(\sup_{ |t|\geq u} Y(t) \ge \delta\Big)
    = \lim_{u'\toi} \pr\Big(\max_{ u'\ge |t|\geq u}Y(t)> \delta\Big) \, .
  \end{align*}
  By definition (\ref{eq:tail_process}) of $Y$, for all but at
  most countably many $u'>u$,
  \begin{align*}
    \pr\Big(\max_{ u'\ge |t|> u}Y(t)\ge \delta\Big)
    = \lim_{\n \toi} \pr\Big(\max_{  u'\ge |t|/r(a_{\n})\geq u}\tX(t)
    > a_{\n} \delta\mid \tX(0)> a_{\n}\Big) \, .
  \end{align*}
  Since $b_{\n}/r(a_{\n})\to \infty$ by \Cref{hypo:t_n}, the two
  previous relations imply that
  \begin{align*}
    \pr\Big(\sup_{  |t|\geq u} Y(t) \ge \delta\Big)
    \leq \limsup_{\n \toi}
    \pr\Big(\max_{\, b_{\n}/r(a_{\n})\ge |t|/r(a_{\n})\geq u}\tX(t)>
    a_{\n} \delta
    \mid \tX(0)\ge a_{\n}\Big) \, .
  \end{align*}
  \Cref{hypo:AC} (with $c=\eps=1$) yields
  \begin{align*}
    \lim_{u\toi} \pr\Big(\sup_{ |t|\geq u} Y(t) \ge \delta\Big) = 0 \, .
  \end{align*}
  Thus, 
  \begin{align}\label{eq:Y_in_lo_aux_1}
    \pr\Big(\bigcup_{u>0} \bigcap_{|t|>u} \{Y(t)< \delta\}\Big) = 1 \, ,
  \end{align}
  and since $\pr(Y\in \sN_{11})=1$, one has that
  $Y(\R^d\times [\delta,\infty))<\infty$ a.s. Since $\delta$
  was arbitrary, this proves (\ref{eq:Y_in_lo}).

  We now turn to (\ref{eq:conv_to_tail_in_N_star}). Fix a
  $y>0$. For all but at most countably many $u>0$, the definition of
  the tail configuration implies that 
  \begin{align}\label{eq:conv_to_tail_in_N_star_aux_1}
    \pr \left((T^{\n}_y \tX)_{\ball_u} \in \cdot \mid \tX(0)> a_{\n} y
    \right) 
    \wto \pr(Y_{\ball_u} \in \cdot \,) \quad \text{as}\; \n\toi 
  \end{align}
  in $\sN_{11}$ with the $\borel_{11}$-vague topology.  For measures
  in $\sN_{11}$ whose support is in $\ball_u\times (0,\infty)$,
  $\borel_{11}$-vague topology is actually equivalent to the (in
  general stronger) $\borel_{01}$-vague topology. Thus,
  (\ref{eq:conv_to_tail_in_N_star_aux_1}) holds on $\sN_{01}$ with
  respect to the $\borel_{01}$-vague topology as well. Furthermore, it
  is easy to see that, due to (\ref{eq:Y_in_lo_aux_1}),
  $Y_{\ball_u} \to Y$ almost surely in $\sN_{11}$ as $u\toi$. In
  particular,
  \begin{align}\label{eq:conv_to_tail_in_N_star_aux_2}
    \pr(Y_{\ball_u} \in \cdot \,) \wto \pr(Y\in \cdot \, ) \quad \text{as}\; u\toi
  \end{align}
  on $\sN_{11}$ with respect to the $\borel_{01}$-vague topology. By
  the classical result on weak convergence of probability measures
  (see \cite[Theorem~4.2]{billingsley:1968}), to prove
  (\ref{eq:conv_to_tail_in_N_star}) it suffices to show that
  \begin{align}\label{eq:conv_to_tail_in_N_star_aux_3}
    \lim_{u\toi} \limsup_{\n\toi} \ex\Big[\metric((T^{\n}_y
    \tX)_{\ball_u}, 
    T^{\n}_{y} (\tX_{D_{\tau}}) ) \mid \tX(0)>a_{\n} y\Big] = 0 \, ,
  \end{align}
  where $\metric$ is the metric from (\ref{eq:metric_on_l0}) (which
  generates the $\borel_{01}$-vague topology on $\sN_{01}$).
  For this, observe that 
  \begin{align*}
    (T^{\n}_y \tX)_{\ball_u} = T^{\n}_y (\tX_{\ball_{r(a_{\n} y) u}}) \, .
  \end{align*}
  Let $u,\epsilon>0$ be arbitrary. 
  Due to the first inclusion in
  (\ref{eq:conv_to_tail_in_N_star_incr_sets}) and since
  $b_{\n}/r(a_{\n} y)\to \infty$ by \Cref{hypo:t_n},
  $\ball_{r(a_{\n} y)u} \subseteq D_{\tau}$ for all sufficiently large
  $\n$.  Observe that if
  \begin{equation}
    \label{eq:2a}
    \tmax_{t\in D_\n \setminus \ball_{r(a_{\n} y)u}} \tX(t)
    \leq a_{\n} y \epsilon \, ,
  \end{equation}
  then $T^{\n}_y (\tX_{\ball_{r(a_{\n} y) u}})$ and
  $T^{\n}_{y} (\tX_{D_{\tau}})$ coincide when restricted to
  $\R^d\times (1/r,\infty)$ with $r < 1/\epsilon$. Since $\metric_0$
  is bounded by 1, 
  \begin{align}\label{eq:useful_trick}
    \metric(T^{\n}_y (\tX_{\ball_{r(a_{\n} y) u}}), T^{\n}_{y}
    (\tX_{D_{\tau}}) ) 
    \leq \int_{1/\epsilon}^{\infty} e^{-r} \dx r = e^{-1/\epsilon} \, .
  \end{align}
  If \eqref{eq:2a} does not hold, we use the fact that $\metric$ is
  bounded by 1.  Thus,
  \begin{align*}
    \limsup_{\n \toi}
    &\; \ex\Big[\metric(T^{\n}_y (\tX_{\ball_{r(a_{\n} y)
      u}}), 
      T^{\n}_{y} (\tX_{D_{\tau}}) ) \mid \tX(0)>a_{\n} y\Big] \\
    &\leq  e^{-1/\epsilon}
      + \limsup_{\n \toi}\pr\Big(\tmax_{t\in D_\n \setminus
      \ball_{r(a_{\n} y)u}}
      \tX(t) > a_{\n} y \epsilon \mid \tX(0)>a_{\n} y\Big) \\
    &\leq e^{-1/\epsilon}
      + \limsup_{\n \toi}\pr\Big(\tmax_{t\in \ball_{b_{\n} c_2}
      \setminus \ball_{r(a_{\n} y)u}}\tX(t) > a_{\n} y
      \epsilon \mid \tX(0)>a_{\n} y\Big) \, ,
  \end{align*}
  where the last inequality holds, since
  $D_{\n}\subseteq \ball_{b_{\n} c_2}$. If now one lets $u\toi$ and
  then $\epsilon\to 0$, \Cref{hypo:AC} implies
  (\ref{eq:conv_to_tail_in_N_star_aux_3}), and this proves
  (\ref{eq:conv_to_tail_in_N_star}).
\end{proof}

For all $\mu\in \sN_{01}$ and $y>0$, let 
\begin{align*}
  A_y(\mu):=\min\{t\in \R^d : (t,s)\in \mu, s>y\},
\end{align*}
where the minimum is taken with respect to the lexicographic order; if
$M(\mu)\leq y$, set $A_y(\mu):=0$. Note that $A_y$ is well defined,
since for every $\mu\in \sN_{01}$ and $y>0$ there are at most
finitely many $(t,s)\in \mu$ with $s>y$. Observe also that $A_y$ is
equivariant under translations, that is, if $M(\mu)>y$,
\begin{align} \label{eq:covariance_A}
  A_y(\shift{z}\mu)=A_y(\mu)-z,\quad  z \in \R^d. 
\end{align}
In particular, $A_1$ is precisely the first exceedance anchoring
function $A^{\mathrm{fe}}$ from (\ref{eq:first_exc_anchor}). As shown
in \Cref{lem:theta_positive}, $\pr(A_1(Y)=0)$ is positive
whenever $\pr(Y\in \sN_{01})=1$, which holds, for instance, under
Assumptions~\ref{hypo:t_n} and \ref{hypo:AC}. Recall also that $X_{\n}:=X_{[0,b_\n]^d}$.

\begin{proposition}\label{prop:conv_of_clusters}
  Assume that Assumptions~\ref{hypo:t_n} and \ref{hypo:AC} hold.  Then
  for every $y>0$,
  \begin{align}\label{eq:theta}
    \frac{\pr(M(X_{\n})>a_\n y)}{b_\n^d \pr(\xi>a_\n y)} \to
    \pr(A_1(Y)=0)
    \quad \text{as}\; \n\toi, 
  \end{align}
  and
  \begin{align}\label{eq:conv_of_clusters}
    \pr\Big([T^{\n}_{y} (X_{\n})] \in \cdot \mid M(X_{\n})>a_\n y
    \Big) 
    \wto \pr\Big([Y] \in \cdot \mid  A_1(Y)=0\Big) \;\;\;\; \text{on }   \tsN_{01} . 
  \end{align}
\end{proposition}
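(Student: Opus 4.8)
The plan is to deduce both \eqref{eq:theta} and \eqref{eq:conv_of_clusters} from a single convergence statement. Note first that under \Cref{hypo:t_n} and \Cref{hypo:AC} one has $\pr(Y\in\sN_{01})=1$ (by \Cref{prop:conv_to_tail_in_N_star}), so $e(Y)$ is a.s.\ a finite nonempty subset of $\R^d$ containing $0$; write $\#e(\mu):=\mu(\R^d\times(1,\infty))$ for the number of exceedances of a $\mu\in\sN_{01}$. I would show that, for every bounded continuous $g$ on $\tsN_{01}$ (equivalently, bounded continuous shift-invariant $g:\sN_{01}\to[0,\infty)$),
\[
\frac{1}{b_\n^d\,\pr(\xi>a_\n y)}\,\ex\big[g([T^{\n}_{y} X_{\n}])\,\one{M(X_{\n})>a_\n y}\big]\;\longrightarrow\;\ex\Big[\frac{g([Y])}{\#e(Y)}\Big]
\qquad\text{as }\n\toi .
\]
Once this is known, taking $g\equiv1$ yields $b_\n^{-d}\pr(\xi>a_\n y)^{-1}\pr(M(X_{\n})>a_\n y)\to\ex[1/\#e(Y)]$, while dividing the general statement by this one gives $\ex[g([T^{\n}_{y}X_{\n}])\mid M(X_{\n})>a_\n y]\to\ex[g([Y])/\#e(Y)]\big/\ex[1/\#e(Y)]$; since such $g$ determine weak convergence on $\tsN_{01}$, \eqref{eq:conv_of_clusters} and \eqref{eq:theta} will follow after the limits are reidentified in the last step.

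First I would linearize the numerator by anchoring: since $g$ and $\#e$ are shift-invariant and $\sum_{(t,s)\in X_{\n}}\one{s>a_\n y}=\#e(T^{\n}_{y}X_{\n})$, on $\{M(X_{\n})>a_\n y\}$ one has the exact identity $g([T^{\n}_{y}X_{\n}])\one{M(X_{\n})>a_\n y}=\sum_{(t,s)\in X_{\n}}\one{s>a_\n y}\,g([T^{\n}_{y}\shift{t}X_{\n}])/\#e(T^{\n}_{y}\shift{t}X_{\n})$, and the same identity holds trivially off that event. Applying the refined Campbell formula \eqref{eq:refined_Campbell} (with $\lambda=1$ and $\tX(0)=\xi$) converts the expectation into $\int_{[0,b_\n]^d}\ex\big[\one{\xi>a_\n y}\,g([T^{\n}_{y}\tX_{D_{\n,t}}])/\#e(T^{\n}_{y}\tX_{D_{\n,t}})\big]\,\dx t$, where $D_{\n,t}:=[0,b_\n]^d-t$; note $0\in D_{\n,t}$, so $\#e\ge1$ on $\{\xi>a_\n y\}$ and no division by zero occurs. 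I would then split the cube into a boundary layer $\{t:\min_j\min(t_j,b_\n-t_j)<\rho b_\n\}$, whose contribution is at most $C\rho\,b_\n^d\pr(\xi>a_\n y)\|g\|_\infty$, and a core on which $\ball_{\rho b_\n}\subseteq D_{\n,t}\subseteq\ball_{\sqrt d\,b_\n}$. On the core I would upgrade \Cref{prop:conv_to_tail_in_N_star} to a version uniform in $t$: inspecting its proof, the only way the window enters the estimate \eqref{eq:conv_to_tail_in_N_star_aux_3} is through the outer radius (via \Cref{hypo:AC} with $c=\sqrt d$) and through eventually containing $\ball_{r(a_\n y)u}$, both of which hold uniformly over core $t$; hence $T^{\n}_{y}\tX_{D_{\n,t}}\wto Y$ conditionally on $\xi>a_\n y$, uniformly in core $t$. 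Because the $\mathrm{Pareto}(\alpha)$ variable $\eta$ has a density, $\pr(Y(\R^d\times\{1\})=0)=1$, so $\mu\mapsto g([\mu])/\#e(\mu)$ is bounded and $\pr_Y$-a.s.\ continuous; combining this with the uniform weak convergence and a routine truncation using $\#e(Y)<\infty$ a.s., the core integral divided by $b_\n^d\pr(\xi>a_\n y)$ tends to $(1-2\rho)^d\,\ex[g([Y])/\#e(Y)]$, and letting $\rho\to0$ gives the displayed limit.

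Finally I would reidentify the right-hand sides using the exceedance-stationarity property \eqref{eq:exc_stat}: applying it with $h(t,\mu)=g([\mu])\,\one{A_1(\mu)=0,\,M(\mu)>1}/(\#e(\mu)\vee1)$ and using $A_1(\shift{t}Y)=A_1(Y)-t$ together with $\sum_{(t,s)\in Y}\one{s>1}\one{A_1(\shift{t}Y)=0}=1$ a.s., one obtains $\ex[g([Y])/\#e(Y)]=\ex[g([Y])\,\one{A_1(Y)=0}]$, and for $g\equiv1$ this reads $\ex[1/\#e(Y)]=\pr(A_1(Y)=0)$, which is strictly positive by \Cref{lem:theta_positive}. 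Dividing then turns the conditional limit into $\ex[g([Y])\mid A_1(Y)=0]$, establishing \eqref{eq:conv_of_clusters}, while the $g\equiv1$ case is exactly \eqref{eq:theta}. The main obstacle I anticipate is the uniform-in-$t$ strengthening of \Cref{prop:conv_to_tail_in_N_star} for the moving windows $D_{\n,t}$ and the attendant passage of the merely $\pr_Y$-a.s.\ continuous functional $g([\cdot])/\#e(\cdot)$ through the limit; working with $\#e$ rather than with a lexicographic anchor inside the proof is what keeps this step tractable, since the anchor $A_1$ then enters only through the last, purely algebraic, identity.
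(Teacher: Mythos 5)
Your proof is correct, and it takes a genuinely different route from the paper's. The paper collapses the indicator $\one{M(X_{\n})>a_\n y}$ to a single summand by inserting the first-exceedance anchor, writing $\one{M(X_{\n})>a_\n y}=\sum_{(t,s)\in X}\one{t\in[0,b_{\n}]^d,\,A_{a_\n y}(X_{\n})=t,\,X(t)>a_\n y}$, so that after the refined Campbell formula and \Cref{prop:conv_to_tail_in_N_star} one arrives directly at $\ex[h(Y)\one{A_1(Y)=0}]$; you instead spread the indicator over all exceedance points with the shift-invariant weight $1/\#e(\cdot)$, arrive at $\ex[g([Y])/\#e(Y)]$, and then invoke exceedance-stationarity \eqref{eq:exc_stat} once at the end to transform this into $\ex[g([Y])\one{A_1(Y)=0}]=\vartheta\,\ex[g([Y])\mid A_1(Y)=0]$. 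The two decompositions are dual: yours keeps the pre-limit functional shift-invariant and defers the anchoring to a purely algebraic identity (essentially the computation underlying \Cref{lem:theta_positive} and \Cref{prop:anchors_are_equivalent}), at the cost of one extra application of Theorem~\ref{prop:exceedance-s}; the paper's is more direct but has to carry the non-shift-invariant indicator $\one{A_1(\cdot)=0}$ through the limit. Both face the same discontinuity issue and resolve it identically, using that $\pr(Y(\R^d\times\{1\})=0)=1$.

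One remark on execution: the boundary-layer decomposition and the appeal to a uniform-in-$t$ version of \Cref{prop:conv_to_tail_in_N_star} are more than you need. After the change of variables $s=t/b_\n$, the integrand (divided by $\pr(\xi>a_\n y)$) is bounded by $\|g\|_\infty$ and converges pointwise for every fixed $s$ in the open cube $(0,1)^d$, because the sets $D_\n=b_\n([0,1]^d-s)$ then satisfy the sandwich condition \eqref{eq:conv_to_tail_in_N_star_incr_sets} with $c_1=\min_j\min(s_j,1-s_j)$ and $c_2=\sqrt d$. Dominated convergence over $[0,1]^d$ therefore gives the limit with no uniformity estimate at all; this is exactly what the paper does, and it would simplify your argument without changing its structure. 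The truncation based on $\#e(Y)<\infty$ a.s.\ is likewise superfluous, since $1/\#e$ is already bounded by $1$ on the Palm support where $\#e\ge 1$.
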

\begin{proof}
  Consider a function $h:\sN_{01}\to [0,\infty)$, which is bounded
  continuous and shift-invariant.  First, observe that we can decompose
  \begin{multline*}
    h\big(T^{\n}_{y} (X_{\n})\big) \ind{M(X_{\n})>a_\n y} \\ = \tsum_{(t,s) \in X} h\big(T^{\n}_{y} (X_{\n})\big) \ind{t\in
    [0,b_{\n}]^d, 
    A_{a_{\n} y}(X_{\n})=t, X(t)>a_{\n} y}\, .
  \end{multline*}
  Since $h$ is shift-invariant and since $T^{\n}_{y}$ scales the
  positions with $r(a_{\n}y)^{-1}$, 
  \begin{align*}
    h\big(T^{\n}_{y} (X_{\n})\big) = h\big(\shift{t/r(a_{\n}y)}T^{\n}_{y}
    (X_{\n})\big) 
    = h\big(T^{\n}_{y} (\shift{t} X_{\n})\big)
    =  h\big(T^{\n}_{y} ((\shift{t}X)_{[0,b_{\n}]^{d}-t})\big) \, .
  \end{align*}
  By the definition of $A_{a_{\n} y}$,
  \begin{displaymath}
    \big\{A_{a_{\n} y}(X_{\n})=t\big\} = \big\{A_{a_{\n} y}(\shift{t} X_{\n})=0\big\}
    = \big\{A_{a_{\n} y}((\shift{t}X)_{[0,b_{\n}]^{d}-t})=0\big\} \, .
  \end{displaymath} 
  Next, $\{X(t)>a_{\n} y\}=\{\shift{t}X(0)>a_{\n}y\}$.
  All of the above implies that 
  \begin{align*}
    \ex\Big[h(T^{\n}_{y} (X_{\n})) \one{M(X_{\n})>a_\n y}\Big]
    = \ex\Big[ \tsum_{(t,s) \in X} g(t, \shift{t}X)\Big] 
  \end{align*}
  for an obvious choice of a function
  $g:\R^d \times \sN_{01} \to [0,\infty)$. Refined Campbell's theorem
  (\ref{eq:refined_Campbell}) yields that
  \begin{multline*}
    \ex\Big[h(T^{\n}_{y} (X_{\n})) \one{M(X_{\n})>a_\n y}\Big]
    = \int_{\R^d} \ex[g(t,\tX)] \dx t \\
    = \int_{[0,b_{\n}]^d} \ex\Big[h(T^{\n}_{y}
    (\tX_{[0,b_{\n}]^{d}-t})) \ind{A_{a_{\n} y}
      (\tX_{[0,b_{\n}]^{d}-t})=0, \tX(0)>a_{\n}y}\Big] \dx t \\
    = b_{\n}^d \int_{[0,1]^d} \ex\Big[h(T^{\n}_{y}
    (\tX_{[0,b_{\n}]^{d}-b_{\n}s}))
    \ind{A_{1}(T^{\n}_{y}(\tX_{[0,b_{\n}]^{d}-b_{\n}s}))=0, \tX(0)>a_{\n}y}\Big] \dx s \, ,
  \end{multline*}
  where for the last equality we used the substitution $s=t/b_{\n}$
  and the fact that $A_{a_{\n} y}(\mu) = 0$ if and only if
  $A_{1}(T^{\n}_{y}\mu) = 0$. In particular (recall that
  $\xi=\tX(0)$),
  \begin{equation}\label{eq:vague_conv_of_cluster_aux1}
    \frac{\ex\Big[h(T^{\n}_{y} (X_{\n}))
      \one{M(X_{\n})>a_\n y}\Big]}{b_{\n}^d \pr(\xi>a_\n y)}
    = \int_{[0,1]^d} \ex\Big[f(T^{\n}_{y} (\tX_{[0,b_{\n}]^{d}-b_{\n}s}))
    \mid \tX(0)>a_{\n}y \Big] \,  \dx s \, ,
  \end{equation}
  where $f(\mu):=h(\mu)\one{A_1(\mu)=0}$.  Since for every fixed
  $s\in (0,1)^d$, sets $D_{\n}:=[0,b_{\n}]^{d}-b_{\n}s$, $\n>0$,
  satisfy (\ref{eq:conv_to_tail_in_N_star_incr_sets}),
  (\ref{eq:conv_to_tail_in_N_star}) implies that
  \begin{align}\label{eq:vague_conv_of_clusters_aux2}
    \lim_{\n\toi}\ex\big[f(T^{\n}_{y} (\tX_{[0,b_{\n}]^{d}-b_{\n}s}))
    \mid \tX(0)>a_{\n}y \big] = \ex[f(Y)]
    = \ex\big[h(Y)\one{A_1(Y)=0}\big] \, .
  \end{align}
  Observe here that $f$ is not continuous on the whole $\sN_{01}$,
  but since the probability that $(t,1)\in Y$ for some $t\in\R^d$ is
  zero (due to Propositions \ref{prop:S0t_is_Pareto} and \ref{prop:spectral_is_indep}), it is continuous on the support of $Y$ -- this justifies the
  use of (\ref{eq:conv_to_tail_in_N_star}). By the dominated
  convergence theorem and since the limit in
  (\ref{eq:vague_conv_of_clusters_aux2}) does not depend on $s$, 
  (\ref{eq:vague_conv_of_cluster_aux1}) yields that
  \begin{align}\label{eq:vague_conv_of_clusters}
    \lim_{\tau\to\infty}\frac{\ex\big[h(T^{\n}_{y} (X_{\n}))
    \one{M(X_{\n})>a_\n y}\big]}{b_{\n}^d \pr(\xi>a_\n y)}
    = \ex\big[h(Y)\one{A_1(Y)=0}\big] \, .
  \end{align}
  Convergence in (\ref{eq:theta}) follows from
  (\ref{eq:vague_conv_of_clusters}) with $h\equiv 1$.  To prove
  (\ref{eq:conv_of_clusters}), we notice that
  \begin{align*}
    \lim_{\n\toi} \ex\Big[h(T^{\n}_{y} (X_{\n}))
    \mid M(X_{\n})>a_\n y \Big] = \ex\big[h(Y) \mid  A_1(Y)=0\big] 
  \end{align*}
  follows directly from (\ref{eq:theta}) and
  (\ref{eq:vague_conv_of_clusters}).
\end{proof}

Let $\bM^{*}$ be the space of all Borel measures on $\tsN_{01}^{*}$ which
are finite on all sets from the family
\begin{align*}
  \borel^{*}=\big\{B\subseteq \tsN_{01}^{*} :
  \exists\eps>0, \forall [\mu]\in B,  M([\mu])>\eps\big\} \, .
\end{align*}
Equip $\bM^{*}$ with the vague topology generated by $\borel^*$.
Furthermore, let $\bM$ be the space of all Borel measures on
$[0,1]^d\times \tsN_{01}^*$ taking finite values on $[0,1]^d\times B$
for all $B\in \borel^*$, and equip it with the corresponding vague
topology. Observe that the intensity measure
\begin{align*}
  \ex\big[N_{\n}(\, \cdot \,)\big]
  = \tsum_{\bi \in I_{\n}} \pr\big( \, (\bi b_{\n}/{\n}, T^{\n}_1
  [\bB_{\n, \bi}]) \in \cdot \,\big)
\end{align*}
of the point process $N_{\n}$ from (\ref{eq:PP_blocks}), is an element
of $\bM$ for all $\n>0$. Recall that $\bN\subset \bM$ defined right before \Cref{thm:main_result} is the subset of all \textit{counting} measures.

\begin{proposition}[Intensity
  convergence] \label{prop:intensity_convergence} Assume that
  Assumptions~\ref{hypo:t_n} and \ref{hypo:AC} hold. Then
  \begin{align}\label{eq:intensity_conv_1}
    k_{\n}^d \;\pr\Big([T^{\n}_1 X_{\n}] \in \cdot \, \Big) \vto \nu
    \quad \text{in}\; \bM^{*}\; \text{as}\; \n\toi\; ,
  \end{align}
  where
  \begin{align}\label{eq:nu}
    \nu(\, \cdot \,) = \vartheta \int_{0}^\infty
    \pr\big([T_{u^{-\beta}, 
    u^{-1}} Q] \in \cdot \,\big) \alpha u^{-\alpha-1} \dx u,
  \end{align}
  and $Q\in \sN_{01}$ has distribution
  (\ref{eq:normalized_typical_cluster_cond}). In particular,
  \begin{align}\label{eq:intensity_conv_2}
    \ex\big[N_{\n}(\, \cdot \,)\big] \vto \Leb\times \nu(\cdot)\quad
    \text{as}\; \n\toi,
  \end{align}
 in $\bN$,  where $\Leb$ is the Lebesgue measure on $\R^d$.
\end{proposition}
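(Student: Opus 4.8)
The plan is to deduce \eqref{eq:intensity_conv_1} from the two parts of \Cref{prop:conv_of_clusters} by splitting off a ``mass'' factor and a ``shape'' factor, and then to obtain \eqref{eq:intensity_conv_2} by a Riemann-sum argument. Fix a bounded continuous $g:\tsN'_{01}\to\R$ whose support is contained in $\{[\mu]:M([\mu])>\eps\}$ for some $\eps>0$, extended by $0$ at the null measure (so $g$ is bounded continuous on $\tsN_{01}$, since $M\to0$ near the null measure); by definition of the $\borel'$-vague topology it suffices to prove $k_\n^d\,\ex\big[g([T^\n_1 X_\n])\big]\to\int g\,\dx\nu$. Since $M([T^\n_1 X_\n])=a_\n^{-1}M(X_\n)$, the quantity $g([T^\n_1 X_\n])$ vanishes off $\{M(X_\n)>a_\n\eps\}$, so
\begin{equation*}
  k_\n^d\,\ex\big[g([T^\n_1 X_\n])\big]
  =k_\n^d\,\pr\big(M(X_\n)>a_\n\eps\big)\cdot
  \ex\big[g([T^\n_1 X_\n])\mid M(X_\n)>a_\n\eps\big].
\end{equation*}
For the first factor, write it as $\big(k_\n^d b_\n^d\pr(\xi>a_\n\eps)\big)\cdot\frac{\pr(M(X_\n)>a_\n\eps)}{b_\n^d\pr(\xi>a_\n\eps)}$. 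Since $b_\n/\n\to0$ gives $(k_\n b_\n/\n)^d\to1$, \eqref{eq:a_n_eps} yields $k_\n^d b_\n^d\pr(\xi>a_\n\eps)\to\eps^{-\alpha}$, while \eqref{eq:theta} gives that the displayed ratio tends to $\pr(A_1(Y)=0)$. Recalling that $A_1=A^{\mathrm{fe}}$ from \eqref{eq:first_exc_anchor} is an anchoring function and that $\pr(Y\in\sN_{01})=1$ by \Cref{prop:conv_to_tail_in_N_star}, \Cref{prop:anchors_are_equivalent} gives $\pr(A_1(Y)=0)=\pr(A^{\mathrm{fm}}(Y)=0)=\vartheta$. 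Hence the first factor converges to $\vartheta\eps^{-\alpha}$.

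For the second factor, observe that $T^\n_1=T_{\,r(a_\n)/r(a_\n\eps),\,\eps^{-1}}\circ T^\n_\eps$ (using $T_{v,u}\circ T_{v',u'}=T_{vv',uu'}$), and that regular variation of $r$ gives $r(a_\n)/r(a_\n\eps)\to\eps^{-\beta}$. By \eqref{eq:conv_of_clusters} with $y=\eps$, the conditional law of $[T^\n_\eps X_\n]$ given $M(X_\n)>a_\n\eps$ converges weakly on $\tsN_{01}$ to the conditional law of $[Y]$ given $A_1(Y)=0$. Feeding this together with the deterministic convergence $r(a_\n)/r(a_\n\eps)\to\eps^{-\beta}$ into the extended continuous mapping theorem \cite[Theorem~5.5]{billingsley:1968}, applied through the jointly continuous scaling maps $(v,[\mu])\mapsto T_{v,\eps^{-1}}[\mu]$ (continuity as for the shift action), we obtain that the conditional law of $[T^\n_1 X_\n]$ given $M(X_\n)>a_\n\eps$ converges weakly to that of $T_{\eps^{-\beta},\eps^{-1}}[Y]$ given $A_1(Y)=0$. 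Since $g$ is bounded and continuous,
\begin{equation*}
  \ex\big[g([T^\n_1 X_\n])\mid M(X_\n)>a_\n\eps\big]
  \longrightarrow
  \ex\big[g\big(T_{\eps^{-\beta},\eps^{-1}}[Y]\big)\mid A_1(Y)=0\big].
\end{equation*}

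It remains to identify $\vartheta\eps^{-\alpha}\,\ex\big[g(T_{\eps^{-\beta},\eps^{-1}}[Y])\mid A_1(Y)=0\big]$ with $\int g\,\dx\nu$. By \Cref{prop:anchors_are_equivalent} the law of $[Y]$ given $A_1(Y)=0$ equals its law given $A^{\mathrm{fm}}(Y)=0$, which by \eqref{eq:spectral_decomp_of_Z} equals $\int_1^\infty\pr\big(T_{u^{-\beta},u^{-1}}[Q]\in\cdot\big)\,\alpha u^{-\alpha-1}\,\dx u$, with $Q$ as in \eqref{eq:normalized_typical_cluster_cond}. Using $T_{\eps^{-\beta},\eps^{-1}}\circ T_{u^{-\beta},u^{-1}}=T_{(\eps u)^{-\beta},(\eps u)^{-1}}$ and substituting $v=\eps u$,
\begin{equation*}
  \ex\big[g(T_{\eps^{-\beta},\eps^{-1}}[Y])\mid A_1(Y)=0\big]
  =\int_1^\infty\ex\big[g\big(T_{(\eps u)^{-\beta},(\eps u)^{-1}}[Q]\big)\big]\,\alpha u^{-\alpha-1}\,\dx u
  =\eps^{\alpha}\int_\eps^\infty\ex\big[g\big(T_{v^{-\beta},v^{-1}}[Q]\big)\big]\,\alpha v^{-\alpha-1}\,\dx v.
\end{equation*}
Since $M(Q)=1$ a.s., one has $M\big(T_{v^{-\beta},v^{-1}}[Q]\big)=v$, so $g(T_{v^{-\beta},v^{-1}}[Q])=0$ for $v\le\eps$ and the last integral equals $\int_0^\infty\ex[g(T_{v^{-\beta},v^{-1}}[Q])]\,\alpha v^{-\alpha-1}\,\dx v$; multiplying by $\vartheta\eps^{-\alpha}$ gives exactly $\int g\,\dx\nu$ with $\nu$ as in \eqref{eq:nu} (in particular $\nu\in\bM'$, as $\nu$ restricted to $\{M>\eps\}$ is bounded by $\vartheta\eps^{-\alpha}$). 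This proves \eqref{eq:intensity_conv_1}.

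Finally, \eqref{eq:intensity_conv_2} follows by stationarity of $X$: each $[T^\n_1\bB_{\n,\bi}]$ equals $[T^\n_1 X_\n]$ in distribution, so testing against products $\phi(t)g([\mu])$ with $\phi$ continuous on $[0,1]^d$ and $g$ as above, $\sum_{\bi\in I_\n}\phi(\bi b_\n/\n)\,\ex[g([T^\n_1\bB_{\n,\bi}])]=\big(k_\n^{-d}\sum_{\bi\in I_\n}\phi(\bi b_\n/\n)\big)\cdot k_\n^d\,\ex[g([T^\n_1 X_\n])]$, where the first bracket is a Riemann sum tending to $\int_{[0,1]^d}\phi\,\dx\Leb$ (the points $\bi b_\n/\n$ form a grid of mesh $b_\n/\n\to0$ filling $[0,1]^d$) and the second factor tends to $\int g\,\dx\nu$ by \eqref{eq:intensity_conv_1}; hence $\ex[N_\n(\cdot)]\vto\Leb\times\nu$. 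The delicate step is the second one: one must line up the scaling operators $T^\n_y$ so that $[T^\n_1 X_\n]$ appears as a rescaling (by a factor tending to $(\eps^{-\beta},\eps^{-1})$) of $[T^\n_\eps X_\n]$, and then verify in the change of variables that the a priori $\eps$-dependent limit $\vartheta\eps^{-\alpha}\ex[g(T_{\eps^{-\beta},\eps^{-1}}[Y])\mid A_1(Y)=0]$ is in fact independent of $\eps$, which is where the support condition $\supp g\subseteq\{M>\eps\}$ together with $M(Q)=1$ enters. Everything else is routine given \Cref{prop:conv_of_clusters,prop:conv_to_tail_in_N_star}.
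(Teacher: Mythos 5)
Your proof is correct and follows essentially the same route as the paper: decompose $k_\n^d\,\ex[g([T^\n_1 X_\n])]$ into a mass factor handled by \eqref{eq:theta}, \eqref{eq:a_n_eps} and \Cref{prop:anchors_are_equivalent}, and a shape factor handled by \eqref{eq:conv_of_clusters} combined with the extended continuous mapping theorem through $T^\n_1=T_{r(a_\n)/r(a_\n\eps),\eps^{-1}}\circ T^\n_\eps$, then substitute $v=\eps u$ and use $M(Q)=1$ to remove the $\eps$-dependence. The only cosmetic difference is in deriving \eqref{eq:intensity_conv_2}: the paper invokes \cite[Lemma~4.1]{kallenberg:2017} and tests against indicators $\bone_{(a,b]}(t)\bone_A([\mu])$ with $\nu(\partial A)=0$, whereas you test against continuous products $\phi(t)g([\mu])$ and argue via a Riemann sum; both are standard ways to identify the limiting intensity.
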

\begin{proof}
  Let $h:\bM^{*}\to [0,\infty)$ be a bounded and continuous function such
  that for some $\epsilon>0$, $h([\mu])=0$ whenever
  $M([\mu])\leq \eps$. Then
  \begin{equation}
    \label{eq:11}
    k_{\n}^{d} \;\ex\big[h([T_{1}^{\n}X_{\n}])\big]
    = k_{\n}^{d} \;\ex\Big[h([T_{1}^{\n}X_{\n}])\one{M(X_{\n})>a_{\n}\epsilon}\Big].
  \end{equation}
  Since $k_{\n}\sim \n / b_{\n}$ as $\n\toi$, (\ref{eq:a_n}) implies
  that $k_{\n} \sim (b_{\n}^d \pr(\xi >a_{\n}))^{-1}$ as $\n \toi$.
  Thus, as $\n\toi$,
  \begin{multline}\label{eq:intensity_conv_aux1}
    k_{\n}^{d} \ex\Big[h([T_{1}^{\n}X_{\n}])\one{M(X_{\n})>a_{\n}\epsilon}\Big] \\
    \sim \ex\big[h([T_{1}^{\n}X_{\n}])\mid
    M(X_{\n})>a_{\n}\epsilon\big] 
    \; \frac{\pr(M(X_{\n})>a_{\n}\epsilon)}{b_{\n}^d
      \pr(\xi >a_{\n}\epsilon)} \; \frac{\pr(\xi >a_{\n}\epsilon)}{\pr(\xi >a_{\n})} \, .
  \end{multline}
  By (\ref{eq:theta}), the second term on the right-hand side of
  (\ref{eq:intensity_conv_aux1}) converges to $\pr(A_1(Y)=0)$, which
  in turn by \Cref{prop:anchors_are_equivalent} equals
  $\vartheta=\pr(A(Y)=0)$. By regular variation property
  (\ref{eq:S_0_is_RV}), the third term tends to
  $\epsilon^{-\alpha}$. For the first term, recall that $T_{y}^{\n}$
  scales the positions with $r(a_{\n}y)^{-1}$ and scores with
  $(a_{\n}y)^{-1}$. Thus,
  \begin{align*}
    T_{1}^{\n}X_{\n} = T_{r(a_{\n})/r(a_{\n}\epsilon), \epsilon^{-1}}T_{\epsilon}^{\n}X_{\n}.
  \end{align*}
  By assumption, $r$ is a regularly varying function with index
  $\beta$, so $r(a_{\n})/r(a_{\n}\epsilon)\to \epsilon^{-\beta}$ as
  $\n\toi$. In particular, (\ref{eq:conv_of_clusters}) and the
  extended continuous mapping theorem (see
  \cite[Theorem~5.5]{billingsley:1968}) imply that
  \begin{align*}
    \lim_{\n\toi} \ex\big[h([T_{1}^{\n}X_{\n}])\mid
    M(X_{\n})>a_{\n}\epsilon\big]
    &= \lim_{\n\toi} \ex\big[h(T_{r(a_{\n})/r(a_{\n}\epsilon),
      \epsilon^{-1}}[T_{\epsilon}^{\n}X_{\n}])\mid M(X_{\n})>a_{\n}\epsilon\big]\\
    &= \ex\big[h(T_{\epsilon^{-\beta}, \epsilon^{-1}}[Y]) \mid A_1(Y)=0\big].
  \end{align*}
  By \Cref{prop:anchors_are_equivalent}
  and (\ref{eq:spectral_decomp_of_Z}), we can rewrite the limit as
  \begin{align*}
    \ex[h(T_{\epsilon^{-\beta}, \epsilon^{-1}}[Y]) \mid A_1(Y)=0]
    & = \ex[h(T_{\epsilon^{-\beta}, \epsilon^{-1}}[Y]) \mid A^{\mathrm{fm}}(Y)=0] \\
    & = \int_{1}^{\infty} \ex\big[h(T_{\epsilon^{-\beta},
      \epsilon^{-1}}[T_{u^{-\beta}, u^{-1}} Q]) \big] \alpha u^{-\alpha-1} \dx u \\
    & = \int_{1}^{\infty} \ex\big[h([T_{(\epsilon u)^{-\beta},
      (\epsilon u)^{-1}} Q]) \big] \alpha u^{-\alpha-1} \dx u \\
    & = \epsilon^{\alpha} \int_{\epsilon}^{\infty}
      \ex\big[h([T_{y^{-\beta}, y^{-1}} Q]) \big] \alpha y^{-\alpha-1} \dx y \\
    & = \epsilon^{\alpha} \int_{0}^{\infty}
      \ex\big[h([T_{y^{-\beta}, y^{-1}} Q]) \big] \alpha y^{-\alpha-1} \dx y \, .
  \end{align*}
  In the above we used the substitution $y=u\epsilon$ to obtain the
  fourth equality. For the final equality, note that
  $M(T_{y^{-\beta}, y^{-1}} Q)=y$ since $M(Q)=1$ a.s. In
  particular, $h([T_{y^{-\beta}, y^{-1}} Q])= 0$ a.s.\ whenever
  $y\le\epsilon$ by the properties of $h$ stated in the beginning of
  the proof.

  Bringing everything together, \eqref{eq:11} and
  (\ref{eq:intensity_conv_aux1}) imply that
  \begin{align*}
    \lim_{\n \toi} k_{\n}^{d} \;\ex\big[h([T_{1}^{\n}X_{\n}])\big]
    &=\lim_{\n \toi} k_{\n}^{d} \;\ex\big[h([T_{1}^{\n}X_{\n}])\ind{M(X_{\n})>a_{\n}\epsilon}\big]\\
    & =\epsilon^{\alpha} \int_{0}^{\infty}
      \ex\big[h([T_{y^{-\beta}, y^{-1}} Q]) \big] \alpha y^{-\alpha-1}\dx y 
      \,  \vartheta \, \epsilon^{-\alpha} \\
    &= \vartheta \int_{0}^{\infty}  \ex\big[h([T_{y^{-\beta}, y^{-1}}
      Q])\big] \alpha y^{-\alpha-1} \dx y = \nu(h) \, , 
  \end{align*}
  where $\nu$ is defined at (\ref{eq:nu}). Since $h$ is arbitrary,
  this proves (\ref{eq:intensity_conv_1}).

  We now prove (\ref{eq:intensity_conv_2}). By
  \cite[Lemma~4.1]{kallenberg:2017}, it suffices to prove that
  $\ex\big[N_{\n}(g)\big] \to (\Leb\times \nu)(g)$ for all
  $g:[0,1]^d\times \tsN_{01}^{*}\to [0,\infty)$ of the form
  $g(t, [\mu]) = \bone_{(a,b]}(t) \bone_{A}([\mu])$, where $(a,b]$ is
  the parallelepiped in $\R^d$ determined by
  $a=(a_1,\dots, a_d), b=(b_1,\dots,b_d)\in [0,1]^d$ with
  $a_j\leq b_j$ for all $j$, and $A\in \borel^{*}$ (with $\borel^{*}$
  defined just before \Cref{prop:intensity_convergence}) such that
  $\nu(\partial A)=0$.

  Recall that $X_{\n}=X_{[0,b_{\n}]^d}$, so
  $X_{\n}=\bB_{\n,\boldsymbol{1}}$ and due to stationarity of $X$,
  $\pr([X_{\n}]\in \cdot \, )= \pr([\bB_{\n, \bi}]\in \cdot \, )$ for
  all $\bi\in I_{\n}=\{1,\dots,k_{\n}\}^d$, where
  $k_{\n}= \lfloor\n/b_{\n}\rfloor$. Thus, as $\n \toi$,
  \begin{align*}
    \ex\big[N_{\n}(g)\big]
    & = \tsum_{\bi \in I_{\n}} \one{\bi b_{\n}/{\n} \in (a,b]}  
      \pr([T_1^{\n} X_{\n}] \in A)
      \sim \left(\frac{\n}{b_{\n}}\right)^d \,
      \prod_{j=1}^d (b_j-a_j) \pr([T_1^{\n} X_{\n}] \in A) \\
    &\sim \prod_{j=1}^d (b_j-a_j) k_{\n}^d \pr([T_1^{\n} X_{\n}]
      \in A) 
      \to \prod_{j=1}^d (b_j-a_j) \nu(A) = \Leb\times \nu \, (g) \, ,
  \end{align*}
  where in the penultimate step we applied
  (\ref{eq:intensity_conv_1}).
\end{proof}

To finish the proof of \Cref{thm:main_result}, we need the following
technical lemma.

\begin{lemma}\label{lem:I_blocks_cond_for_original_blocks}
  Assume that $(b_\n)_{\n>0}$ is such that Assumptions~\ref{hypo:t_n} and
  \ref{hypo:AC} hold. Then, Assumption~\ref{hypo:AI_of_extr_blocks}
  implies
  \begin{align}\label{eq:AI_blocks_cond_for_original_blocks}
    \ex\Big[\tprod_{\bi \in I_{\n}}
    \exp\left\{-f(\bi b_{\n} / {\n}, T^{\n}_{1} [{\bB}_{\n,\bi}])\right\}\Big] -
    \tprod_{\bi\in I_{\n}}
    \ex\Big[ \exp\left\{-f\left(\bi b_{\n} / {\n},
    T^{\n}_{1}[{\bB}_{\n,\bi}]\right)\right\}\Big] \to 0
  \end{align}
  as $\n\toi$ for every $f: [0,1]^d \times \tsN_{01} \to [0,\infty)$
  such that
  \begin{itemize}
  \item[(i)] for some $\epsilon>0$, $M(\mu)\leq \epsilon$ implies
    $f(t, [\mu])=0$ for all $t\in [0,1]^d$;
  \item[(ii)] $f$ is Lipschitz, that is, for some $c>0$,
    \begin{align*}
      |f(t, [\mu])- f(s,[\nu])| \leq c
      \max\big\{|t-s|, \tilde{\metric}([\mu],[\nu]) \big\}
    \end{align*}   
    for all $t,s\in [0,1]^d$ and nontrivial measures $\mu, \nu \in \sN_{01}$.
  \end{itemize}  
\end{lemma}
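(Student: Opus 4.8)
The plan is to obtain \eqref{eq:AI_blocks_cond_for_original_blocks} from Assumption~\ref{hypo:AI_of_extr_blocks} through two successive approximations, exploiting in each case that the error produced is localized on a rare event controlled by refined Campbell's theorem~\eqref{eq:refined_Campbell} and~\eqref{eq:a_n_eps}. Throughout, abbreviate $W_{\n,\bi}:=\exp\{-f(\bi b_\n/\n,[T^{\n}_1\bB_{\n,\bi}])\}\in(0,1]$, let $(l_\n)_\n$ be the family supplied by Assumption~\ref{hypo:AI_of_extr_blocks}, and recall the elementary inequality~\eqref{eq:elem_ineq}, which for $a_\bi,b_\bi\in[0,1]$ gives $|\tprod_\bi a_\bi-\tprod_\bi b_\bi|\le\tsum_\bi|a_\bi-b_\bi|$.

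Step~1 (truncation in the score). With $\epsilon$ from~(i) and $c$ from~(ii), fix $\delta\in(0,\epsilon)$ and put $f^\delta(t,[\mu]):=f(t,[\mu^\delta])$, using the convention (forced by~(i)) that $f$ vanishes at the null class. Since $M(\mu^\delta)\le M(\mu)$, the function $f^\delta$ still vanishes when $M(\mu)\le\epsilon$; and since $\mu^{1/u}=(\mu^\delta)^{1/u}$ for $u\le 1/\delta$, the definition~\eqref{eq:metric_on_l0} and the Lipschitz property~(ii) yield
\begin{align*}
  |f(t,[\mu])-f^\delta(t,[\mu])|\le c\,\tilde{\metric}([\mu],[\mu^\delta])\le c\,\metric(\mu,\mu^\delta)\le c\,e^{-1/\delta}.
\end{align*}
Writing $W^\delta_{\n,\bi}$ for $W_{\n,\bi}$ with $f$ replaced by $f^\delta$, and using $|e^{-x}-e^{-y}|\le|x-y|$ together with the fact that both $f$ and $f^\delta$ vanish unless $M(\mu)>\epsilon$ (equivalently $M(\bB_{\n,\bi})>a_\n\epsilon$ for $\mu=T^{\n}_1\bB_{\n,\bi}$, since $T^{\n}_1$ scales scores by $a_\n^{-1}$),
\begin{align*}
  |W_{\n,\bi}-W^\delta_{\n,\bi}|\le c\,e^{-1/\delta}\,\one{M(\bB_{\n,\bi})>a_\n\epsilon}.
\end{align*}
Hence by~\eqref{eq:elem_ineq}, \eqref{eq:refined_Campbell} and $k_\n^d b_\n^d\le\n^d$, both $\ex\big|\tprod_\bi W_{\n,\bi}-\tprod_\bi W^\delta_{\n,\bi}\big|$ and $\big|\tprod_\bi\ex W_{\n,\bi}-\tprod_\bi\ex W^\delta_{\n,\bi}\big|$ are at most $c\,e^{-1/\delta}\tsum_\bi\pr(M(\bB_{\n,\bi})>a_\n\epsilon)\le c\,e^{-1/\delta}\,\n^d\pr(\xi>a_\n\epsilon)$, whose $\limsup_\n$ equals $c\,e^{-1/\delta}\epsilon^{-\alpha}$ by~\eqref{eq:a_n_eps}.

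Step~2 (passing to the trimmed blocks). Fix the $\delta$ of Step~1, and let $E_{\n,\bi}$ be the event that $X$ has no point $(t,s)$ with $t\in J_{\n,\bi}\setminus\hat{J}_{\n,\bi}$ and $s>a_\n\delta$. On $E_{\n,\bi}$ the restrictions of $\bB_{\n,\bi}$ and $\widehat{\bB}_{\n,\bi}$ to scores exceeding $a_\n\delta$ coincide, so, as $f^\delta$ depends on its argument only through that restriction, $W^\delta_{\n,\bi}=\widehat{W}^\delta_{\n,\bi}:=\exp\{-f^\delta(\bi b_\n/\n,[T^{\n}_1\widehat{\bB}_{\n,\bi}])\}$ there; since all these quantities lie in $[0,1]$, $|W^\delta_{\n,\bi}-\widehat{W}^\delta_{\n,\bi}|\le\one{E^c_{\n,\bi}}$. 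Thus by~\eqref{eq:elem_ineq} both $\ex\big|\tprod_\bi W^\delta_{\n,\bi}-\tprod_\bi\widehat{W}^\delta_{\n,\bi}\big|$ and $\big|\tprod_\bi\ex W^\delta_{\n,\bi}-\tprod_\bi\ex\widehat{W}^\delta_{\n,\bi}\big|$ are at most $\tsum_\bi\pr(E^c_{\n,\bi})$. Refined Campbell's theorem gives $\pr(E^c_{\n,\bi})\le\Leb(J_{\n,\bi}\setminus\hat{J}_{\n,\bi})\,\pr(\xi>a_\n\delta)\le 2d\,l_\n b_\n^{d-1}\,\pr(\xi>a_\n\delta)$, so $\tsum_\bi\pr(E^c_{\n,\bi})\le 2d\,(l_\n/b_\n)\,\n^d\pr(\xi>a_\n\delta)\to0$ by $l_\n/b_\n\to0$ and~\eqref{eq:a_n_eps}.

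Step~3 (conclusion, and the main difficulty). The maps $g_{\n,\bi}(\mu):=f^\delta(\bi b_\n/\n,[\mu])$ (set to $0$ when $M(\mu)\le\delta$) are shift-invariant, vanish at the null measure, and satisfy $g_{\n,\bi}(\mu)=g_{\n,\bi}(\mu^\delta)$ for the single $\delta$ chosen above; hence $g_{\n,\bi}\in\sF$, and applying~\eqref{eq:AI_blocks_cond} to this family yields $\ex[\tprod_\bi\widehat{W}^\delta_{\n,\bi}]-\tprod_\bi\ex[\widehat{W}^\delta_{\n,\bi}]\to0$ (note $\Tu[a_\n]=T^{\n}_1$). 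Inserting $W^\delta_{\n,\bi}$ and $\widehat{W}^\delta_{\n,\bi}$ into a five-term triangle inequality and combining with Steps~1 and~2, for any $\eta>0$ one chooses $\delta$ small enough that $\limsup_\n\big|\ex[\tprod_\bi W_{\n,\bi}]-\tprod_\bi\ex[W_{\n,\bi}]\big|\le\eta$; since $\eta>0$ is arbitrary, \eqref{eq:AI_blocks_cond_for_original_blocks} follows. I expect Step~1 to be the only genuinely delicate point: a uniform bound $\|f-f^\delta\|_\infty\le c\,e^{-1/\delta}$ is by itself useless, because the product over $\bi\in I_\n$ has $k_\n^d\to\infty$ factors and the naive estimate $k_\n^d c\,e^{-1/\delta}$ diverges. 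What rescues the argument is that the per-block error is supported on the event that block $\bi$ is extremal, and the expected number of extremal blocks stays bounded (by $\epsilon^{-\alpha}$, via~\eqref{eq:a_n_eps}); the same ``error lives on rare blocks'' mechanism, with the even smaller bound $2d(l_\n/b_\n)\n^d\pr(\xi>a_\n\delta)$, then controls the edge-trimming error in Step~2.
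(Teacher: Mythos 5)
Your proof is correct and follows essentially the same route as the paper: define $f^\delta$, use the Lipschitz property to get the uniform bound $ce^{-1/\delta}$ on the difference localized to extremal blocks, control the trimming error by refined Campbell's theorem, and invoke Assumption~\ref{hypo:AI_of_extr_blocks} on $f^\delta$. The only difference is the order of the two approximations (the paper first replaces trimmed by untrimmed blocks for $f^\delta$ and then passes from $f^\delta$ to $f$, whereas you go $f\to f^\delta\to$ trimmed), which is immaterial.
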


Observe that, for each $\n>0$, the first term on the left-hand side of
(\ref{eq:AI_blocks_cond_for_original_blocks}) is the Laplace
functional $L_f(N_{\n})$ of the point process $N_{\n}$, while the
second term is the Laplace functional of the point process
\begin{align*}
  N_\n^* := \left\{\big(\bi b_{\n} / \n \, , \;
  T^{\n}_1[\bB_{\n,\bi}^*]\big): 
  \bi \in I_{\n}\right\},
\end{align*}
where the blocks $\bB_{\n,\bi}^*$, $\bi \in I_{\n}$, are independent,
and for each $\bi\in I_{\n}$, $\bB_{\n,\bi}^*$ has the same
distribution as the original block $\bB_{\n,\bi}$.

\begin{proof}[Proof of \Cref{lem:I_blocks_cond_for_original_blocks}]
  Let $f$ be an arbitrary function satisfying the assumptions of the
  lemma for some $\epsilon$ and $c$. Fix an arbitrary
  $\delta<\epsilon$ and define a function
  $f^{\delta}:[0,1]^d \times \sN_{01} \to [0,\infty)$ by
  $f^\delta(t,\mu):= f(t,[\mu^{\delta}])$, where $\mu^\delta$ is the
  restriction of $\mu \in \sN_{01}$ to $\R^d\times
  (\delta,\infty)$. For all $t\in [0,1]^d$ and all
  $\mu\in \sN_{01}$, the Lipschitz property of $f$ implies that
  \begin{align}\label{eq:lipschitzness}
    |f(t,[\mu])- f^\delta(t,\mu)| \leq c
    \tilde{\metric}([\mu],[\mu^{\delta}]) 
    \leq c \metric(\mu, \mu^{\delta}) \leq c e^{-1/\delta} \, ,
  \end{align} 
  see (\ref{eq:useful_trick}) for a similar argument which justifies
  the last inequality.

  By the construction of $f^\delta$ and properties of
  $f$, \Cref{hypo:AI_of_extr_blocks} implies that
  (\ref{eq:AI_blocks_cond}) holds for the family
  \begin{align*}
    f_{\n,\bi}(\mu):= f^{\delta}(\bi b_{\n}/\n, \mu), \quad
    \n>0,\, \bi\in I_{\n}, \,\mu \in \sN_{01}.
  \end{align*}

  We first show that in (\ref{eq:AI_blocks_cond}) one can replace the
  trimmed blocks $\hat{\bB}_{\n,\bi}$ with the original ones
  $\bB_{\n,\bi}$. For this, observe that
  \begin{multline*}
    \Big|\ex\Big[\tprod_{\bi \in I_{\n}}
    \exp\left\{-f_{\n,\bi}(T^{\n}_{1} {\bB}_{\n,\bi})\right\}\Big]
    - \ex\Big[\tprod_{\bi \in I_{\n}}
    \exp\{-f_{\n,\bi}( T^{\n}_{1} \hat{\bB}_{\n,\bi})\}\Big]\Big| \\
    \leq \tsum_{\bi \in I_{\n}} \ex \Big|\exp\{-f_{\n,\bi}(T^{\n}_{1}
      {\bB}_{\n,\bi})\} - \exp\{-f_{\n,\bi}( T^{\n}_{1} \hat{\bB}_{\n,\bi})\}\Big| \, ,
  \end{multline*}
  where we used the elementary inequality
  \begin{align}\label{eq:elem_ineq}
    |\tprod_{i=1}^k a_i -  \tprod_{i=1}^k b_i| \leq \tsum_{i=1}^k |a_i-b_i|
  \end{align}
  valid for all $k$ and all $a_i,b_i\in[0,1]$,
  $i=1,\dots, k$. Recall the blocks of indices $J_{\n,\bi}$ and
  $\hat{J}_{\n,\bi}$ from (\ref{eq:blocks_of_indices}) and
  (\ref{eq:trimmed_blocks}), respectively. Observe that
  $M(X_{J_{\n,\bi}\setminus \hat{J}_{\n,\bi}})\leq a_{\n} \delta$
  implies that
  $f_{\n,\bi}(T^{\n}_{1} {\bB}_{\n,\bi})=f_{\n,\bi}( T^{\n}_{1}
  \hat{\bB}_{\n,\bi})$. In particular, due to stationarity of $X$,
  \begin{align*}
    \tsum_{\bi \in I_{\n}} \ex &\Big|\exp{\{-f_{\n,\bi}(T^{\n}_{1}
    {\bB}_{\n,\bi})\}} 
    - \exp\{-f_{\n,\bi}( T^{\n}_{1} \hat{\bB}_{\n,\bi})\}\Big| \\
    & \leq k_{\n}^d \;\pr(M(X_{[0,b_{\n}]^d\setminus [l_{\n},
      b_{\n}-l_{\n}]^d })> a_{\n} \delta) \\
    &\le k_{\n}^d \;\ex\Big[\tsum_{(t,s)\in X} \ind{t\in [0,b_{\n}]^d
      \setminus [l_{\n}, b_{\n}-l_{\n}]^d, s>a_{\n}\delta}\Big] \\
    &= k_{\n}^d \Leb([0,b_{\n}]^d\setminus [l_{\n}, b_{\n}-l_{\n}]^d)
      \;\pr(\tX(0)>a_{\n} \delta)\\ 
    &\sim \text{const } k_{\n}^d b_{\n}^{d-1} l_{\n}
      \pr(\tX(0)>a_{\n} \delta)
      \sim \text{const } \frac{l_{\n}}{b_{\n}} \n^{d}
      \pr(\tX(0)>a_{\n} \delta) 
      \to \text{const } \cdot 0\cdot \delta^{-\alpha}=0
  \end{align*}
  as $\n\toi$. In the third step we used the refined Campbell's
  formula, in the fourth the assumption $l_{\n}/b_{\n}\to 0$, in the
  fifth the fact that $k_{\n} \sim \n / b_{\n}$, and, finally,
  (\ref{eq:a_n_eps}). Thus,
  \begin{align*}
    \Big|   \ex\Big[\tprod_{\bi \in I_{\n}}
    \exp\left\{-f_{\n,\bi}(T^{\n}_{1} {\bB}_{\n,\bi})\right\}\Big]
    - \ex\Big[\tprod_{\bi \in I_{\n}}
    \exp\{-f_{\n,\bi}( T^{\n}_{1} \hat{\bB}_{\n,\bi})\}\Big]\Big|
    \to 0 \quad \text{as}\; \n \toi \, .
  \end{align*}
  After applying (\ref{eq:elem_ineq}), the same arguments also imply
  \begin{align*}
    \Big|\tprod_{\bi \in I_{\n}}
    \ex\Big[\exp\left\{-f_{\n,\bi}(T^{\n}_{1}
    {\bB}_{\n,\bi})\right\}\Big] 
    - \tprod_{\bi \in I_{\n}}
    \ex\Big[\exp\{-f_{\n,\bi}( T^{\n}_{1}\hat{\bB}_{\n,\bi})\}\Big]\Big|
    \to 0 \quad \text{as}\; \n \toi \, .
  \end{align*}
  Together with (\ref{eq:AI_blocks_cond}), this implies that, as $\n\toi$,
  \begin{multline}\label{eq:AI_original_blocks_aux3}
    \Delta_{\n}^{\delta}:=\Big|\ex\Big[\tprod_{\bi \in I_{\n}}
    \exp\left\{-f^{\delta}(\bi b_{\n} / {\n},
      T^{\n}_{1} {\bB}_{\n,\bi})\right\}\Big] -
    \tprod_{\bi\in I_{\n}}
    \ex\Big[ \exp{\left\{-f^{\delta}\left(\bi b_{\n} / {\n},
          T^{\n}_{1}{\bB}_{\n,\bi}\right)\right\}}\Big] \Big| \\
    =   \Big|\ex\Big[\tprod_{\bi \in I_{\n}}
    \exp{\left\{-f_{\n,\bi}(T^{\n}_{1} {\bB}_{\n,\bi})\right\}}\Big]
    - \tprod_{\bi \in I_{\n}}
    \ex\Big[\exp{\left\{-f_{\n,\bi}(T^{\n}_{1}
        {\bB}_{\n,\bi})\right\}}\Big]\Big|
    \to 0 .
  \end{multline}

  Using (\ref{eq:elem_ineq}) and the inequality
  $|e ^{-x}-e^{-y}|\leq |x-y|$ for $x,y\geq 0$, one gets
  \begin{align}
    \Big| \ex
    &\Big[\tprod_{\bi \in I_{\n}}
    \exp{\left\{-f(\bi b_{\n} / {\n}, T^{\n}_{1} [{\bB}_{\n,\bi}])\right\}}\Big] -
    \tprod_{\bi\in I_{\n}}
    \ex\Big[ \exp{\left\{-f\left(\bi b_{\n} / {\n},
          T^{\n}_{1}[{\bB}_{\n,\bi}]\right)\right\}}\Big] \Big|\notag \\
    &\le \Big|\ex\Big[\tprod_{\bi \in I_{\n}}
    \exp{\left\{-f(\bi b_{\n} / {\n}, T^{\n}_{1}
        [{\bB}_{\n,\bi}])\right\}}\Big] 
    - \ex\Big[\tprod_{\bi \in I_{\n}}
    \exp{\left\{-f^{\delta}(\bi b_{\n} / {\n},
        T^{\n}_{1} {\bB}_{\n,\bi})\right\}}\Big] \Big| \notag \\
    &+ \Big|\tprod_{\bi \in I_{\n}}
    \ex\Big[\exp{\left\{-f(\bi b_{\n} / {\n},
        T^{\n}_{1} [{\bB}_{\n,\bi}])\right\}}\Big] - \tprod_{\bi \in I_{\n}}
    \ex\Big[\exp{\left\{-f^{\delta}(\bi b_{\n} / {\n},
        T^{\n}_{1} {\bB}_{\n,\bi})\right\}}\Big] \Big| + \Delta_{\n}^{\delta}\notag \\
    &\le 2 \tsum_{\bi \in I_{\n}} \ex \Big|f(\bi b_{\n} / {\n},
    T^{\n}_{1} [{\bB}_{\n,\bi}]) - f^{\delta}(\bi b_{\n} / {\n},
    T^{\n}_{1} {\bB}_{\n,\bi})\Big| +  \Delta_{\n}^{\delta} \, .
    \label{eq:AI_original_blocks_aux4}
  \end{align}
  Since $M(\mu)\leq \epsilon$ implies $f(t,[\mu])=f^{\delta}(t,\mu)= 0$,
  for all $\bi\in I_{\n}$,
  \begin{align*}
    \ex \Big|f(\bi b_{\n} / {\n}, T^{\n}_{1} [{\bB}_{\n,\bi}])
    &- f^{\delta}(\bi b_{\n} / {\n}, T^{\n}_{1} {\bB}_{\n,\bi})\Big|\\
    &= \ex \Big|f(\bi b_{\n} / {\n}, T^{\n}_{1} [{\bB}_{\n,\bi}])
    - f^{\delta}(\bi b_{\n} / {\n}, T^{\n}_{1} {\bB}_{\n,\bi})
    \Big|\ind{M({\bB}_{\n,\bi})>a_{\n} \epsilon} \\
    &\leq c e^{-1/\delta} \,\pr(M({\bB}_{\n,\bi})>a_{\n} \epsilon)
    = c e^{-1/\delta} \,\pr(M(X_{[0,b_{\n}]^d})>a_{\n} \epsilon) \, ,
  \end{align*}
  where we used (\ref{eq:lipschitzness}) in the third step. Thus,
  the right-hand side in (\ref{eq:AI_original_blocks_aux4}) is bounded
  by
  \begin{align*}
    2c e^{-1/\delta} k_{\n}^d \;\pr(M(X_{[0,b_{\n}]^d})>a_{\n} \epsilon)
    + \Delta_{\n}^{\delta}  \leq 2 c e^{-1/\delta}
    k_{\n}^d b_{\n}^d \;\pr(\tX(0)>a_{\n} \epsilon) + \Delta_{\n}^{\delta},
  \end{align*}
  which by (\ref{eq:a_n_eps}) and (\ref{eq:AI_original_blocks_aux3})
  tends to $c e^{-1/\delta} \epsilon^{\alpha}$ as $\n\toi$. Since
  $\delta\in (0,\epsilon)$ is arbitrary, letting $\delta\to 0$ 
  finally yields (\ref{eq:AI_blocks_cond_for_original_blocks}).
\end{proof}

We are finally in position to prove \Cref{thm:main_result}.

\begin{proof}[Proof of \Cref{thm:main_result}]
  Since the family of Lipschitz continuous functions from
  \Cref{lem:I_blocks_cond_for_original_blocks} is convergence
  determining in the sense of
  \cite[Definition~2.1]{basrak:planinic:2020} (see
  \cite[Proposition~4.1]{basrak:planinic:2019}), the convergence of
  intensities (\ref{eq:intensity_conv_2}) and the asymptotic
  independence of blocks (\ref{eq:AI_blocks_cond_for_original_blocks})
  imply that, as $\n\toi$, $N_{\tau}$ converges in distribution
   to a Poisson point process $N$ on $\bN=[0,1]^d\times \tsN_{01}^*$
  whose intensity measure is $\Leb\times \nu$; this is
  \cite[Theorem~2.1]{basrak:planinic:2020} which is a consequence of
  the classical Grigelionis theorem,
  see~\cite[Corollary~4.25]{kallenberg:2017}. Standard
  transformation results for Poisson processes now imply that $N$ can
  be constructed as in (\ref{eq:pp_convergence}), and this finishes
  the proof of \Cref{thm:main_result}.
\end{proof}

\section{Acknowledgement}
\label{sec:acjknowledgements}

This work was supported by the Swiss Enlargement Contribution in the
framework of the Croatian--Swiss Research Programme (project number
IZHRZ0\_180549), and by the Croatian science foundation project IP-2022-10-2277.


\end{document}